\documentclass[11pt,letterpaper,reqno]{amsart}

\usepackage{amssymb,amsmath,amsthm,amsfonts}
\usepackage{bbm}
\usepackage{enumitem}
\usepackage{booktabs}
\usepackage{graphicx}
\usepackage[T1]{fontenc}
\usepackage{doi}
\usepackage{tabularx}
\usepackage{booktabs}
\usepackage{array}
\usepackage{float}
\usepackage{bookmark}
\usepackage{hyperref}
\hypersetup{pdfstartview={FitH}}

\newtheorem{thm}{Theorem}[section]
\newtheorem{lem}[thm]{Lemma}
\newtheorem{prop}[thm]{Proposition}
\newtheorem{cor}[thm]{Corollary}
\newtheorem{cla}[thm]{Claim}

\newtheorem{ques}[thm]{Question}

\theoremstyle{definition}

\newtheorem{rem}[thm]{Remark}

\numberwithin{equation}{section}

\newcommand{\B}{\mathcal{B}}
\newcommand{\Hh}{\mathcal{H}}
\newcommand{\Kk}{\mathcal{K}}
\newcommand{\Id}{I}
\newcommand{\Mn}{\mathbb{M}_n}
\newcommand{\Ree}{\operatorname{Re}}
\newcommand{\Ima}{\operatorname{Im}}

\newcommand{\absop}[1]{\left|#1\right|}
\newcommand{\sym}[1]{\left|#1\right|_{\mathrm{sym}}}
\newcommand{\qsym}[1]{\left|#1\right|_{\mathrm{qsym}}}
\newcommand{\rank}{\operatorname{rank}}

\newcommand{\tr}{\operatorname{Tr}\,}

\newcommand{\Ran}{\operatorname{Ran}}

\DeclareMathOperator{\ran}{Ran}
\DeclareMathOperator{\supp}{supp}
\DeclareMathOperator{\diag}{diag}
\DeclareMathOperator{\Tr}{Tr}
\newcommand{\unorm}[1]{%
	\left|\mkern-1.5mu\left|\mkern-1.5mu\left|#1\right|
	\mkern-1.5mu\right|\mkern-1.5mu\right|%
}
\begin{document}
	
\title[Triangle inequality for the  quadratic symmetric modulus]{An operator triangle inequality for the  quadratic symmetric modulus}
	
	\author[T.~Zhang]{Teng Zhang}
	\address{School of Mathematics and Statistics, Xi'an Jiaotong University, Xi'an 710049, P. R. China}
	\email{teng.zhang@stu.xjtu.edu.cn}
	
\subjclass[2020]{Primary 47A63; Secondary 47A30, 47A05}
\keywords{Quadratic symmetric modulus, triangle inequality, unitary orbits, equality cases, Clarkson--McCarthy type inequalities, Euler's identity}
	
	\begin{abstract}
50 years after Thompson's famous triangle inequality for the operator right modulus, we establish a triangle inequality for the quadratic symmetric modulus. We also discuss the corresponding equality cases as well as the infinite-dimensional setting. In addition, we obtain Clarkson--McCarthy type inequalities for the quadratic symmetric modulus.  Moreover, we answer several questions raised by Bourin and Lee in [\emph{Bull. Lond. Math. Soc.} \textbf{44} (2012), no.~6, 1085--1102] and [\emph{Internat. J. Math.} \textbf{31} (2026), no.~6, 2650018].
	\end{abstract}
	
	\maketitle
	
 \tableofcontents
	
	\section{Introduction}\label{sec:introduction}
	Let \(\mathbb{M}_{m,n}\) denote the space of all \(m\times n\) complex matrices, and write \(\mathbb{M}_n:=\mathbb{M}_{n,n}\). Let \(\B(\Hh,\Kk)\) denote the space of all bounded linear operators from a complex Hilbert space \(\Hh\) to a complex Hilbert space \(\Kk\); when \(\Hh=\Kk\), we simply write \(\B(\Hh)\).  We denote by \(\mathbb K(\Hh)\) the ideal of compact operators on \(\Hh\); when the underlying Hilbert space is clear, we simply write \(\mathbb K\).
	
\subsection{Thompson's inequality and its development}
In 1976,	 Thompson \cite{Tho76,Tho77} obtained his famous  triangle inequality for the operator right modulus.
	
	\begin{thm}[Thompson]\label{thm:thompson}
		Let $A,B\in\Mn$. Then there exist two unitaries $U,V$ such that
		\[
		|A+B|\le U|A|U^*+V|B|V^*.
		\]
	\end{thm}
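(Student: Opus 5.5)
The plan is the classical route through polar decompositions and the $2\times 2$ block positivity trick, which reduces the statement to an eigenvalue comparison.

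First, write polar decompositions $A=W_1|A|$, $B=W_2|B|$ and $A+B=W|A+B|$ with $W_1,W_2,W$ unitary, which is possible in $\Mn$. Then
\[
|A+B|=W^*(A+B)=W^*W_1|A|+W^*W_2|B| .
\]
Set $X:=W^*W_1|A|^{1/2}$ and $Y:=W^*W_2|B|^{1/2}$. This gives $|A+B|=X|A|^{1/2}+Y|B|^{1/2}$. Since $|A+B|$ is Hermitian, we also have
\[
|A+B|=\Ree\big(X|A|^{1/2}\big)+\Ree\big(Y|B|^{1/2}\big).
\]
By the arithmetic--geometric mean inequality in the form $\Ree(Z^*T)\le \tfrac12(Z^*Z+T^*T)$ (equivalently $(Z-T)^*(Z-T)\ge0$), we cannot simply bound each real part by $|A|$. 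The factors differ: $X^*X=|A|$, whereas $XX^*=W^*W_1|A|W_1^*W$ is only unitarily equivalent to $|A|$. So I will work with the block matrix
\[
\begin{pmatrix} |A| & (W^*W_1|A|)^* \\ W^*W_1|A| & W^*W_1|A|W_1^*W\end{pmatrix}\ge 0,
\]
which is positive because it equals $R^*R$ with $R=\big(|A|^{1/2},\ |A|^{1/2}W_1^*W\big)$. The analogous matrix for $B$ is also positive. Adding the two gives a positive block matrix $\begin{pmatrix} P & C^*\\ C & Q\end{pmatrix}$ with
\[
P=|A|+|B|,\qquad Q=U_0|A|U_0^*+V_0|B|V_0^*,\qquad C=|A+B|,
\]
where $U_0=W^*W_1$ and $V_0=W^*W_2$. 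I am not yet sure this is the right pairing, since it yields $|A+B|\le$ a sum of unitary conjugates only after a further symmetrization.

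The key consequence of such positivity with $C=|A+B|$ Hermitian is $\pm 2C\le \ldots$ only after conjugating by $(I,\ \pm I)$. Conjugating by $(I, I)$ gives
\[
2|A+B|\le P+Q=(|A|+U_0|A|U_0^*)+(|B|+V_0|B|V_0^*).
\]
This is an inequality against averages of unitary orbits, which is not yet of the required form: a sum of two unitary conjugates is not in general a single unitary conjugate.

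The main obstacle is therefore exactly this passage from a sum of several unitary orbits to one orbit per term. The standard fix, which I would adopt, is Thompson's original eigenvalue argument. It suffices to prove the eigenvalue inequalities
\[
\lambda_{j+k-1}\big(|A+B|\big)\le \lambda_j(|A|)+\lambda_k(|B|)
\]
together with the existence of a common ``interlacing'' realization. More concretely, I would prove, by induction on $n$ via Ky Fan/Courant--Fischer min--max, that for Hermitian $H=|A+B|$ there are subspaces on which $H$ is dominated. The decisive tool I expect to use is: for positive $H,K,L$, if $\lambda_{i+j-1}(H)\le\lambda_i(K)+\lambda_j(L)$ for all admissible $i,j$ and $\lambda_i(H)\le \lambda_i(K)+\lambda_1(L)$ etc., then $H\le UKU^*+VLV^*$ for some unitaries $U,V$ (Thompson's characterization, via the Horn-type inductive construction).

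The eigenvalue inequalities themselves follow from min--max. One takes $x$ in the span of the top $j+k-1$ eigenvectors of $|A+B|$ and orthogonal to the top $j-1$ eigenvectors of $|A|$ and the top $k-1$ eigenvectors of $|B|$. For such $x$,
\[
\langle |A+B|x,x\rangle=\Ree\langle W^*Ax,x\rangle+\Ree\langle W^*Bx,x\rangle\le \||A|^{1/2}x\|\,\||A|^{1/2}W_1^*Wx\|+\cdots .
\]
This needs care because of the second factor, and it is this point, together with the inductive construction of $U,V$, that I expect to be hardest.
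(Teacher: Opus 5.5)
The paper does not prove this statement; it quotes it from Thompson's work and uses it as a black box. Your proposal therefore has to stand on its own, and as written it has a genuine gap.

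\textbf{The tool you call decisive is false.} The Weyl-type inequalities $\lambda_{i+j-1}(H)\le\lambda_i(K)+\lambda_j(L)$ do not imply $H\le UKU^*+VLV^*$ for some unitaries. Take $K=L=\diag(1,0)$ and $H=\diag(2,1)$. All the Weyl inequalities hold ($2\le 1+1$, $1\le 1+0$, $1\le 0+1$). Yet $\Tr H=3>2=\Tr(UKU^*+VLV^*)$ for every choice of $U,V$. A valid criterion must also include trace and Ky Fan/Horn-type inequalities (Klyachko, Knutson--Tao). You neither state these nor verify them for $(|A+B|,|A|,|B|)$, and the inductive construction of $U,V$ is left unspecified.

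There are also two smaller slips. First, it is conjugation by $(I,\,-I)$, not $(I,\,I)$, that gives $2|A+B|\le P+Q$. Second, in the min--max step the right estimate is $\Ree\langle W^*Ax,x\rangle\le\|Ax\|=\||A|x\|\le\lambda_j(|A|)$ for unit $x$ orthogonal to the top $j-1$ eigenvectors of $|A|$. This avoids the problematic factor $\||A|^{1/2}W_1^*Wx\|$.

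\textbf{The missing idea.} You never need to pass from several orbits to one, because each real part is already dominated by a single unitary orbit. By the Fan--Hoffman inequality, $\lambda_j(\Ree Z)\le\lambda_j(|Z|)$ for all $Z\in\Mn$ and all $j$. This follows from Courant--Fischer together with $\Ree\langle Zx,x\rangle\le\|Zx\|=\langle |Z|^2x,x\rangle^{1/2}$. Matching eigenbases of $\Ree Z$ and $|Z|$ then produces a unitary $U$ with $\Ree Z\le U|Z|U^*$. Apply this to $Z=W^*A$ and $Z=W^*B$, and use $|W^*A|=|A|$ and $|W^*B|=|B|$ for unitary $W$. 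Your own identity $|A+B|=\Ree(W^*A)+\Ree(W^*B)$ then gives $|A+B|\le U|A|U^*+V|B|V^*$ immediately.

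You are right that $\Ree(W^*W_1|A|)$ cannot be bounded by $|A|$ itself. But the theorem only asks for a bound by a unitary conjugate of $|A|$. The block-matrix/AM--GM detour is what created the obstacle you then could not remove.
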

	Thompson \cite{Tho79} also characterized the equality case in Theorem~\ref{thm:thompson}.
	
	\begin{thm}[Thompson]\label{thm:thompson-2}
		Let $A,B\in\Mn$. The following are equivalent:
		\begin{enumerate}[label=(\roman*)]
			\item there exist unitaries $U,V\in\Mn$ such that
			\[
			|A+B|= U|A|U^*+V|B|V^*;
			\]
			\item there exists a unitary $W\in\Mn$ such that
			\[
			A=W|A|,\qquad B=W|B|.
			\]
		\end{enumerate}
	\end{thm}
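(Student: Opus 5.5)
The implication (ii)\(\Rightarrow\)(i) is immediate. If \(A=W|A|\) and \(B=W|B|\) with \(W\) unitary, then \(A+B=W(|A|+|B|)\), so
\[
|A+B|=\bigl((|A|+|B|)W^*W(|A|+|B|)\bigr)^{1/2}=|A|+|B|,
\]
and we may take \(U=V=\Id\).

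For (i)\(\Rightarrow\)(ii), I would use a trace argument.

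\textbf{Step 1: reduce to trace-norm equality.} Taking traces in (i) gives \(\tr|A+B|=\tr|A|+\tr|B|\), that is, \(\|A+B\|_1=\|A\|_1+\|B\|_1\).

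\textbf{Step 2: variational description of the trace norm.} Let \(A+B=W|A+B|\) be a polar decomposition with \(W\) unitary (this is always possible in \(\Mn\)). Then
\[
\|A+B\|_1=\tr W^*(A+B)=\Ree\tr W^*A+\Ree\tr W^*B .
\]
For any unitary \(W\) and any \(X\in\Mn\) we have \(\Ree\tr W^*X\le |\tr W^*X|\le\|X\|_1\). Combined with Step 1, equality must hold in both instances: \(\Ree\tr W^*A=\|A\|_1\) and \(\Ree\tr W^*B=\|B\|_1\).

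\textbf{Step 3: the equality case of \(\Ree\tr W^*X\le\|X\|_1\), which is the main obstacle.} I would prove that \(\Ree\tr W^*X=\tr|X|\) forces \(W^*X\ge0\), and hence \(W^*X=|X|\).

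Write \(X=V|X|\) with \(V\) unitary and set \(Z=W^*V\), which is unitary. Then
\[
\tr W^*X=\tr Z|X|=\tr |X|^{1/2}Z|X|^{1/2},
\]
so
\[
\tr|X|-\Ree\tr W^*X=\tr |X|^{1/2}\bigl(\Id-\Ree Z\bigr)|X|^{1/2}.
\]
Since \(\Id-\Ree Z\ge0\), the vanishing of this trace gives \((\Id-\Ree Z)^{1/2}|X|^{1/2}=0\). Because \(Z\) is normal with spectrum on the unit circle, \(\ker(\Id-\Ree Z)=\ker(\Id-Z)\). Hence \(Z|X|^{1/2}=|X|^{1/2}\), and therefore \(Z|X|=|X|\).

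This gives \(W^*X=Z|X|=|X|\), so \(X=W|X|\). A short check shows the argument is insensitive to the non-uniqueness of \(V\) on \(\ker|X|\).

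\textbf{Step 4: conclude.} Applying Step 3 with \(X=A\) and with \(X=B\), using the same unitary \(W\), yields \(A=W|A|\) and \(B=W|B|\), which is (ii).

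The only delicate point is the kernel identification in Step 3. The rest follows directly from the trace comparison, so the full strength of the operator equality in (i) is not needed.
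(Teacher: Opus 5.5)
Your proof is correct, and every step checks out: $\Ree\Tr W^*X\le\|W^*X\|_1=\|X\|_1$; the identity $\Tr|X|-\Ree\Tr W^*X=\Tr|X|^{1/2}(\Id-\Ree Z)|X|^{1/2}$ holds with $\Id-\Ree Z\ge 0$; and $\ker(\Id-\Ree Z)=\ker(\Id-Z)$ for unitary $Z$. Your worry about the choice of $V$ is moot, since $Z|X|=W^*X$ for every admissible $V$.

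There is no proof in the paper to compare with. The theorem is quoted from Thompson's 1979 paper and used only as a black box, in the proof of Theorem~\ref{thm:equality-case-matrix}. There, $\Gamma(A),\Gamma(B)\in\mathbb{M}_{2n,n}$ are embedded into $\mathbb{M}_{3n}$, and a support-projection argument is then needed to extract a partial isometry in $\mathbb{M}_{2n,n}$ from the unitary in $\mathbb{M}_{3n}$.

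Your duality argument gives more than a self-contained proof.

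First, it shows that the formally weaker condition $\|A+B\|_1=\|A\|_1+\|B\|_1$ already implies (ii). Hence (i), (ii) and the intrinsic equality $|A+B|=|A|+|B|$ are all equivalent in $\Mn$.

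Second, it pinpoints finiteness of the trace as the only place where finite dimensionality enters. This is consistent with the Ando--Hayashi counterexample built from infinite-rank projections.

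Third, it handles the rectangular case directly. Take $X\in\mathbb{M}_{2n,n}$ with $X=V|X|$, and let $W\in\mathbb{M}_{2n,n}$ be the partial isometry in the polar decomposition of $\Gamma(A)+\Gamma(B)$. Then $Z=W^*V$ is merely a contraction. Still, $\Ree\langle Zx,x\rangle=\|x\|^2$ forces $Zx=x$, because $\|Zx-x\|^2\le 2\|x\|^2-2\Ree\langle Zx,x\rangle$. This gives $W^*X=|X|$. Then $\|W^*Xy\|=\||X|y\|=\|Xy\|$ forces $\Ran X\subseteq\Ran W$, whence $X=WW^*X=W|X|$. Applied to $X=\Gamma(A)$ and $X=\Gamma(B)$, this proves Theorem~\ref{thm:equality-case-matrix} even under the trace hypothesis alone. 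It needs neither Thompson's theorem nor the $\mathbb{M}_{3n}$ embedding.
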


	Passing from matrices to operators on an infinite-dimensional Hilbert space, the triangle inequality
	persists with unitaries replaced by isometries thanks to Akemann, Anderson and Pedersen \cite{AAP82}:
	
	\begin{thm}[Akemann--Anderson--Pedersen]\label{thm:AAP}
		Let $A,B\in\B(\Hh)$. Then there exist two isometries $U,V$ such that
		\[
		|A+B|\le U|A|U^*+V|B|V^*.
		\]
	\end{thm}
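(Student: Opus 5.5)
The strategy is to write $A+B$ as a product of a row operator and a column operator, and then move the modulus of the column onto the sum of the two moduli, using the fact that $XX^*$ and $X^*X$ agree away from their kernels.

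Write polar decompositions $A=W_A|A|$ and $B=W_B|B|$, with $W_A,W_B$ partial isometries. Then
\[
A+B=\begin{pmatrix} W_A & W_B\end{pmatrix}\begin{pmatrix}|A|\\ |B|\end{pmatrix}=RC,
\]
where $R=\begin{pmatrix} W_A & W_B\end{pmatrix}\in\B(\Hh\oplus\Hh,\Hh)$ and $C=\begin{pmatrix}|A|\\ |B|\end{pmatrix}\in\B(\Hh,\Hh\oplus\Hh)$. Since $\|R\|\le\sqrt2$, this only gives $|A+B|^2\le 2C^*C=2(|A|^2+|B|^2)$, which is too weak. So I would follow the Akemann--Anderson--Pedersen idea, which uses the operator $|A|^{1/2}$ splitting instead.

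\textbf{Step 1 (splitting).} Set
\[
X=\begin{pmatrix} W_A|A|^{1/2} & W_B|B|^{1/2}\end{pmatrix},\qquad Y=\begin{pmatrix}|A|^{1/2}\\ |B|^{1/2}\end{pmatrix}.
\]
Then $A+B=XY$. Write $T=A+B=V_T|T|$ for its polar decomposition.

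\textbf{Step 2 (a $2\times2$ block inequality).} The key inequality I aim for is
\[
|T|\le Y^*\,\bigl|X^*V_T\bigr|\,Y .
\]
The heuristic is $|T|=V_T^*T=V_T^*XY$. I would make this rigorous as follows. Since $V_T^*XY$ is positive, $|T|=\Ree(Y^*\cdot Y^*{}^{-1}\cdots)$ is not directly available, so instead I use the standard lemma: if $Z=V_T^*X$, then $|T|=ZY=Y^*Z^*$. The positive block matrix
\[
\begin{pmatrix}|Z^*| & Z\\ Z^* & |Z|\end{pmatrix}\ge0
\]
then gives, after compressing by $Y$,
\[
|T|=\Ree(ZY)\le\tfrac12\bigl(Y^*|Z^*|Y+Y^*|Z|Y\bigr).
\]
This form is the messy one. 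It is cleaner to use the fact that for positive $|T|=ZY$ we have $\langle |T|x,x\rangle\le \langle |Z^*|Y x,Yx\rangle^{1/2}\langle |Z|Yx,Yx\rangle^{1/2}$.

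\textbf{Step 3 (reduce to the diagonal).} The real target is simpler. Note that $|A|=|A|^{1/2}W_A^*W_A|A|^{1/2}$, so
\[
X^*X=\begin{pmatrix}|A| & *\\ * & |B|\end{pmatrix}.
\]
I want $|T|\le U|A|U^*+V|B|V^*$. Consider $XX^*=W_A|A|W_A^*+W_B|B|W_B^*$. I would use the identity $|T|^2=Y^*X^*XY$ together with the operator-monotone square-root trick: $TT^*=XYY^*X^*\le \|Y\|^2XX^*$ is not sharp. The sharp route is the AAP one. First prove the weaker statement $f(|T|)$ is dominated by the sum for compact approximations, and then pass to the infinite-dimensional case by approximating $A,B$ by finite-rank compressions $P_nAP_n$ and $P_nBP_n$, using Theorem~\ref{thm:thompson} on $P_n\Hh$. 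The unitaries $U_n,V_n$ on $P_n\Hh$ extend to partial isometries. Taking a weak-operator limit point of $U_n,V_n$ (the unit ball is WOT compact) gives contractions $U,V$ with
\[
|A+B|\le U|A|U^*+V|B|V^*.
\]
Here I use that $|P_n(A+B)P_n|\to|A+B|$ strongly and that the right-hand sides converge weakly along a subnet.

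\textbf{Step 4 (contractions to isometries).} Finally, upgrade the contractions to isometries. One way is dilation: replace $U$ by an isometry $\tilde U$ such that $\tilde U|A|\tilde U^*\ge U|A|U^*$. A cleaner way is to write $U|A|U^*=(U|A|^{1/2})(U|A|^{1/2})^*$, which is unitarily equivalent on supports to $|A|^{1/2}U^*U|A|^{1/2}\le|A|$, and then enlarge.

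\textbf{Main obstacle.} Step 4 is where I expect the difficulty: $\Hh$ may not have room unless $\ker$ conditions hold. I would handle this by first replacing $\Hh$ with $\Hh\oplus\Hh$ is not allowed, so instead I use that $S^*S\le|A|$ implies $SS^*\le W|A|W^*$ for some isometry $W$. This is the AAP lemma, which holds because $\dim\Hh$ is infinite, so a partial isometry can be extended to an isometry by exploiting infinite codimension after an amplification trick.
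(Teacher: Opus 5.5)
The paper does not prove this statement; it is quoted from \cite{AAP82} and used as a black box (for instance in Proposition~\ref{prop:rect-AAP}). Your proposal does not prove it either. Steps~1--2 are set up and then dropped, so everything rests on Steps~3--4, and both fail.

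In Step~3 the passage to the limit is invalid. The map $T\mapsto TXT^*$ is not weak-operator continuous: multiplication is not jointly WOT-continuous, and $U_n\to U$ strongly does not give $U_n^*\to U^*$ strongly. So a weak limit point $L$ of the right-hand sides satisfies $|A+B|\le L$, but $L$ need not be $U|A|U^*+V|B|V^*$. Here is a concrete failure. On $\ell^2(\mathbb N)$ take $A=I$ and $B=0$. Let $P_n$ be the projection onto $\mathrm{span}\{e_1,\dots,e_n\}$, and let $U_n$ be the cyclic shift $e_1\mapsto e_2\mapsto\cdots\mapsto e_n\mapsto e_1$ on $P_n\Hh$, extended by zero. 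This is a legitimate choice in Thompson's inequality on $P_n\Hh$, and $U_n\to J$ strongly, where $J$ is the unilateral shift. However, $U_n|P_nAP_n|U_n^*=P_n\to I$, whereas $J|A|J^*=JJ^*=I-E_1$, with $E_1$ the projection onto $\mathbb{C}e_1$. Your limiting conclusion would therefore read $I\le I-E_1$, which is false.

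Step~4 rests on a false lemma. With $J$ as above, let $A=J^*$, so $|A|=JJ^*=I-E_1$, and take the contraction $U=J^*$. Then your operator $S=U|A|^{1/2}=J^*$ satisfies $S^*S=|A|$, while $SS^*=U|A|U^*=J^*J=I$. For every isometry $W$, however, $W|A|W^*$ annihilates the unit vector $We_1$, so $I\not\le W|A|W^*$. Hence domination by a contraction orbit cannot be upgraded term by term to domination by an isometry orbit. The theorem itself is fine for this $A$ (e.g.\ with $B=0$ and the identity as isometry); it is your mechanism that fails. The ``amplification trick'' is also unavailable, precisely because, as you note, $\Hh$ cannot be enlarged.

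Neither the approximation-plus-compactness step nor the termwise upgrade can be repaired as stated. A proof has to produce the isometries $U,V$ directly for the given pair $A,B$, and that is exactly the nontrivial content of the cited result of Akemann, Anderson and Pedersen.
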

		Naturally, we also ask when equality can occur in Theorem~\ref{thm:AAP}.
	However, the equality characterization in Theorem~\ref{thm:thompson-2} is genuinely finite-dimensional.
	Indeed, Ando and Hayashi \cite{AH07} remarked that Theorem~\ref{thm:thompson-2} fails in the
	infinite-dimensional setting: one can find $A,B\in\B(\Hh)$ and unitaries $U,V$ such that
	\[
	|A+B| = U|A|U^{*} + V|B|V^{*},
	\]
	while there is no partial isometry $W$ satisfying simultaneously
	\[
	A= W|A| \quad \text{and} \quad B = W|B|.
	\]
	For instance, let $P$ be an orthogonal projection and let $Q$ be a subprojection of $P$.
	Assume that $P$, $Q$, and $P-Q$ all have infinite rank. Then there exist (at least countably many)
	unitaries $U,V$ for which
	\[
	|P+(-Q)| = U|P|U^{*} + V|{-}Q|V^{*}
	\]
	holds, whereas there is no partial isometry $W$ such that
	\[
	P = W|P| \quad \text{and} \quad (-Q) = W|{-}Q|.
	\]
	
	Even though Theorem~\ref{thm:thompson-2} breaks down in infinite dimension, Ando and Hayashi \cite{AH07}
	showed that the \emph{intrinsic triangle equality} for the usual modulus still admits a clean description.
\begin{thm}[Ando--Hayashi]\label{thm:AH-classical}
	Let $A,B\in\B(\Hh)$.
	If
	\[
	\absop{A+B}=\absop{A}+\absop{B},
	\]
	then there exists a partial isometry $U\in\B(\Hh)$ such that
	\[
	A=U\absop{A},\qquad B=U\absop{B}.
	\]
\end{thm}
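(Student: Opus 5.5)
The strategy is to rewrite the hypothesis as a vanishing "cross term" and show that this forces $A$ and $B$ to be rotated by a common partial isometry. Throughout we write $A=U_A|A|$ and $B=U_B|B|$ for the polar decompositions, with $U_A^*U_A$ the range projection of $|A|$, and similarly for $B$.

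First we square the hypothesis. Set $P=|A|$ and $Q=|B|$. Then $|A+B|^2=(A+B)^*(A+B)=P^2+Q^2+A^*B+B^*A$, while $(P+Q)^2=P^2+Q^2+PQ+QP$. Since $|A+B|=P+Q$, we obtain
\[
A^*B+B^*A=PQ+QP .
\]
The second step turns this into a norm identity. We claim that for every vector $x$,
\[
\|(A+B)x\|^2=\|(P+Q)x\|^2 .
\]
This holds because both sides equal $\langle (P+Q)^2x,x\rangle$. Expanding, the equality of cross terms gives $\Ree\langle Ax,Bx\rangle=\Ree\langle Px,Qx\rangle$. This pointwise relation alone is not enough, so instead we pass to the $2\times 2$ block trick. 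Consider the column operator $C=\begin{pmatrix}A\\ B\end{pmatrix}$ and the row of moduli. For $T=\begin{pmatrix}U_A & U_B\end{pmatrix}:\Hh\oplus\Hh\to\Hh$, we have $T\begin{pmatrix}P\\ Q\end{pmatrix}=A+B$. Moreover $\|T\|\le\sqrt2$, but on the relevant vectors it should act isometrically.

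The more robust route is as follows. We have
\[
\|(A+B)x\|=\|(P+Q)x\|=\|U_APx+U_BQx\|\le\|Px\|+\|Qx\| .
\]
Compare this with
\[
\|(P+Q)x\|^2=\|Px\|^2+\|Qx\|^2+2\Ree\langle Px,Qx\rangle
\]
and with
\[
\|(A+B)x\|^2=\|Px\|^2+\|Qx\|^2+2\Ree\langle U_APx,U_BQx\rangle .
\]
Equality of these gives $\Ree\langle U_B^*U_APx,Qx\rangle=\Ree\langle Px,Qx\rangle$ for all $x$. Applying this to $x+ty$ and $x+ity$ and polarizing in the real sense gives the operator identity $QU_B^*U_AP+PU_A^*U_BQ=QP+PQ$. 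Set $K=U_B^*U_A$, a contraction. Then
\[
Q(K-I)P+P(K^*-I)Q=0 .
\]

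The key step, and the expected main obstacle, is to deduce $QKP=QP$ from this, that is, that $K$ acts as the identity between $\overline{\Ran P}$ and $\overline{\Ran Q}$ in the relevant sense. The idea is to consider the positive operator
\[
\begin{pmatrix}P\\ Q\end{pmatrix}^*\begin{pmatrix}I & K^*\\ K & I\end{pmatrix}\begin{pmatrix}P\\ Q\end{pmatrix}=|A+B|^2 .
\]
Its counterpart with $K$ replaced by $I$ gives $(P+Q)^2$. The block matrix $M_K=\begin{pmatrix}I&K^*\\K&I\end{pmatrix}$ is positive because $K$ is a contraction, and $M_I-M_K\ge0$ would give the result. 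That inequality is not automatic, so we work with the vectors directly. From $\|U_APx+U_BQx\|=\|Px+Qx\|$ together with the triangle bound and Cauchy--Schwarz, we need to extract $U_APx$ and $U_BQx$ aligned exactly as $Px$ and $Qx$ are. Our plan is to argue via the isometry $V:\Hh\to\Hh\oplus\Hh$, $Vx=(Px,Qx)$ suitably normalized. If that fails, we fall back on the known approach via the Douglas lemma: write $P=(P+Q)^{1/2}X$ and $Q=(P+Q)^{1/2}Y$ with $X^*X+Y^*Y$ equal to the support projection. The hypothesis then says that $W_0:=U_AP(P+Q)^{-1/2}+U_BQ(P+Q)^{-1/2}$, defined on $\Ran(P+Q)^{1/2}$, is isometric. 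We then prove $U_AP=W_0P$ and $U_BQ=W_0Q$ using the strict convexity of the Hilbert norm, through equality in the row contraction $\begin{pmatrix}U_AX^* \ \ U_BY^*\end{pmatrix}$.

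Finally, we extend $W_0$ by zero on $\ker(P+Q)=\ker P\cap\ker Q$ to obtain a partial isometry $U$, and check that $A=U|A|$ and $B=U|B|$. I expect the equality-in-contraction step, which shows that each summand is individually rotated by the same $U$, to be the real difficulty.
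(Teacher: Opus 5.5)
The paper gives no proof of this statement. It quotes it from Ando--Hayashi and uses it as a black box (for instance in Proposition~\ref{prop:AH-two-space}), so your argument has to stand on its own. It does not: the step you flag as the ``main obstacle'' is the whole theorem.

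\textbf{Your reduction is correct but does not advance the proof.} Squaring gives $A^*B+B^*A=PQ+QP$. If you had $QKP=QP$, i.e.\ $B^*A=\absop{B}\absop{A}$, then $\langle U_Ax,U_By\rangle=\langle x,y\rangle$ for $x\in\overline{\Ran P}$ and $y\in\overline{\Ran Q}$. So $U(x+y):=U_Ax+U_By$ would be a well-defined isometry on $\overline{\Ran P}+\overline{\Ran Q}$, and extended by $0$ it is the required common partial isometry. But $QKP=QP$ also follows from the conclusion (because $U^*U\absop{A}=\absop{A}$), so it is equivalent to the theorem, and nothing has been proved yet.

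\textbf{The vector-by-vector tools cannot close it.} The hypothesis only gives the real part of a sesquilinear identity on the diagonal, $\Ree\langle U_APx,U_BQx\rangle=\Ree\langle Px,Qx\rangle$. Polarizing ``in the real sense'' just returns $\Ree(QKP)=\Ree(QP)$, which you already had. No strict-convexity argument can supply the missing imaginary part, because $\|U_APx+U_BQx\|=\|Px+Qx\|$ is not an equality case of $\|u+v\|\le\|u\|+\|v\|$. The block route is empty too: $M_I-M_K\ge0$ has zero diagonal blocks, so it forces $K=I$ outright.

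\textbf{The fallback is flawed and circular.}
\begin{itemize}
\item The Douglas factorization must be taken on square roots, $P^{1/2}=X(P+Q)^{1/2}$ and $Q^{1/2}=Y(P+Q)^{1/2}$, to get $X^*X+Y^*Y=\supp(P+Q)$. With $P,Q$ themselves it fails already for commuting scalars.
\item $W_0=(A+B)(P+Q)^{-1/2}$ is not isometric, since $\|W_0(P+Q)^{1/2}x\|=\|(P+Q)x\|$. The isometric map is $(A+B)(P+Q)^{-1}$ on $\Ran(P+Q)$, i.e.\ the polar factor $W$ of $A+B$. With that $W$, the claim ``$U_AP=WP$'' is literally the assertion $A=W\absop{A}$ you are trying to prove.
\end{itemize}

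\textbf{What is actually needed} is a global use of the fact that $W^*U_A$ and $W^*U_B$ are contractions. In $\Mn$ this is a trace argument. Take $W$ unitary, so $W^*A+W^*B=P+Q$. Then $\Ree\Tr(W^*U_AP)=\Ree\Tr(P^{1/2}W^*U_AP^{1/2})\le\Tr P$, and likewise for $B$. Both must therefore be equalities, which forces $W^*U_A$ to fix $\overline{\Ran P}$ and $W^*U_B$ to fix $\overline{\Ran Q}$, i.e.\ $A=WP$ and $B=WQ$.

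On $\B(\Hh)$ there is no trace. Moreover, any argument that, like the trace, uses the hypothesis only through unitarily invariant quantities would also prove the unitary-orbit version. That version fails in infinite dimensions, as the projection example of Ando--Hayashi recalled in the introduction shows. A genuinely intrinsic infinite-dimensional argument is required at this point; that is the content of Ando--Hayashi's paper, and your proposal does not contain one.
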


Aujla and Bourin \cite{AB07} established a matrix subadditivity inequality for concave functions.
\begin{thm}[Aujla--Uchiyama]\label{thm:Bourin--Uchiyama}
	Let $A,B\ge0$ and $f:[0,\infty)\to \mathbb{R}$ be monotone  concave with $f(0)\ge 0$. Then there exist unitaries $U,V$ such that
	\[
	f(A+B)\le Uf(A)U^*+Vf(B)V^*.
	\]
\end{thm}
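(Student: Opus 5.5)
The plan is the standard route: reduce the operator inequality to a family of eigenvalue inequalities, and obtain those from a Ky Fan-type variational argument plus Jensen's inequality for the concave function \(f\). Throughout, \(A,B\ge0\) are \(n\times n\) and eigenvalues are listed in decreasing order, \(\lambda_1\ge\dots\ge\lambda_n\).

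\textbf{Reduction.} It suffices to show that for every \(k\),
\[
\lambda_k\big(f(A+B)\big)\le \lambda_k\big(Uf(A)U^*+Vf(B)V^*\big)
\]
for suitably chosen \(U,V\). This is enough because \(X\le Y\) up to unitary congruence holds whenever \(\lambda_j(X)\le\lambda_j(Y)\) for all \(j\): diagonalize \(X\) and \(Y\) in a common ordered basis. Since \(f\) is monotone, \(\lambda_k(f(A+B))=f(\lambda_k(A+B))\). So the target is a unitary \(W\) with
\[
f(A+B)\le W^*\big(Uf(A)U^*+Vf(B)V^*\big)W .
\]

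\textbf{Key step.} Let \(S\) be the span of the top \(k\) eigenvectors of \(A+B\), and let \(x\in S\) be a unit vector. Then
\[
\langle (A+B)x,x\rangle\ge\lambda_k(A+B).
\]
The idea is to factor \(A+B=C^*C+D^*D\) with \(C=A^{1/2}\) and \(D=B^{1/2}\). Write \(T=\begin{pmatrix}C\\ D\end{pmatrix}\), so \(A+B=T^*T\). Then \(f(T^*T)\) is unitarily similar to \(f(TT^*)\) on the nonzero part; the zero part uses \(f(0)\ge0\). Now
\[
TT^*=\begin{pmatrix}A & C D\\ D C & B\end{pmatrix},
\]
and, in the spirit of Bourin's pinching lemma, \(f(TT^*)\le\) (up to unitary congruence) the pinching
\[
f(A)\oplus f(B).
\]
This follows from the concave Jensen inequality, \(\langle f(Z)x,x\rangle\le f(\langle Zx,x\rangle)\), applied on the spectral subspaces: for \(x\) in the top-\(k\) eigenspace of \(TT^*\), the pinching dominates in the minimax sense. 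Compressing back to \(n\times n\) then gives
\[
f(A+B)\cong J^*\big(f(A)\oplus f(B)\big)J
\]
for an isometry \(J\) made of two blocks \(J_1,J_2\) with \(J_1^*J_1+J_2^*J_2=I\). Finally, write
\[
J_i^*f(\cdot)J_i=|f(\cdot)^{1/2}J_i|^2 ,
\]
which is unitarily equivalent to \(f(\cdot)^{1/2}J_iJ_i^*f(\cdot)^{1/2}\le f(\cdot)\). This yields the unitaries \(U\) and \(V\).

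\textbf{Main obstacle.} The Jensen/minimax step is the delicate part: establishing the eigenvalue-wise bound \(\lambda_k(f(Z))\le\lambda_k\big(f(\text{pinching of }Z)\big)\) for a concave monotone \(f\). I would first try the following. Take \(x\) in the span of the top \(k\) eigenvectors of the pinching-related matrix, and use
\[
\langle f(Z)x,x\rangle\le f(\langle Zx,x\rangle)
\]
together with Courant–Fischer and the monotonicity of \(f\). The condition \(f(0)\ge0\) is needed to handle the rank defects and zero blocks, where the compressions lose dimensions.
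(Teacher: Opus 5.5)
The paper does not prove this statement; it quotes it from Aujla--Bourin and Bourin--Uchiyama. So I am judging your plan against the standard argument. Your plan has a genuine gap at the key step.

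You claim that $f(Z)$, with $Z=TT^*=\begin{pmatrix}A&CD\\ DC&B\end{pmatrix}$, is dominated by a \emph{single} unitary orbit of the pinching $f(A)\oplus f(B)$. Equivalently, $\lambda_k(f(Z))\le\lambda_k(f(A)\oplus f(B))$. After compressing, you get $f(A+B)\le J^*(f(A)\oplus f(B))J$ for one isometry $J=\binom{J_1}{J_2}$. Both claims are false. Take $f(t)=t$ and $A=B=I_n$:
\begin{itemize}
\item $Z=\begin{pmatrix}I&I\\ I&I\end{pmatrix}$ has top eigenvalue $2$, while $f(A)\oplus f(B)=I_{2n}$ has top eigenvalue $1$;
\item $f(A+B)=2I_n$, whereas $J^*I_{2n}J=I_n$ for every isometry $J$.
\end{itemize}
Pinching moves eigenvalues the wrong way: $\lambda(A\oplus B)$ is majorized by $\lambda(Z)$. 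By Cauchy interlacing, your route would force $f(\lambda_1(A+B))\le\max\{f(\lambda_1(A)),f(\lambda_1(B))\}$.

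The theorem genuinely needs two independent orbits. In the example, $2I_n=UI_nU^*+VI_nV^*$ uses two full unitaries. What you need is a bound $f(A+B)\le K_1f(A)K_1^*+K_2f(B)K_2^*$ with contractions $K_1,K_2$ that are \emph{not} the two blocks of one isometry. Your Courant--Fischer step is also oriented wrongly. An upper bound on $\lambda_k$ comes from maximizing over an $(n-k+1)$-dimensional subspace (the eigenvectors for $\lambda_k,\dots,\lambda_n$), not over the top $k$ eigenvectors.

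The missing idea is to split \emph{before} using Jensen. Put $P_1=I\oplus 0$, $P_2=0\oplus I$, and write $f(Z)=f(Z)^{1/2}P_1f(Z)^{1/2}+f(Z)^{1/2}P_2f(Z)^{1/2}$. Each summand is unitarily congruent to the compression $P_if(Z)P_i$; this is the decomposition of Lemma~\ref{lem:BL12-3.4} applied to $f(Z)$.

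For a compression, your Jensen idea does work. Let $x\in\operatorname{Ran}P_1$ be a unit vector in the span of eigenvectors of $A$ for $\lambda_k(A),\dots,\lambda_n(A)$. By concavity and monotonicity, $\langle f(Z)x,x\rangle\le f(\langle Zx,x\rangle)=f(\langle Ax,x\rangle)\le f(\lambda_k(A))$. Hence $\lambda_k\bigl(P_1f(Z)P_1|_{\operatorname{Ran}P_1}\bigr)\le\lambda_k(f(A))$, and similarly for the second block. This gives $f(Z)\le U(f(A)\oplus 0)U^*+V(0\oplus f(B))V^*$.

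Since $f(Z)$ is unitarily congruent to $f(A+B)\oplus f(0)I_n$, compressing to the first summand yields $f(A+B)\le K_1f(A)K_1^*+K_2f(B)K_2^*$ with $n\times n$ contractions $K_i$. Your final step then handles each term separately: $K_1f(A)K_1^*\simeq f(A)^{1/2}K_1^*K_1f(A)^{1/2}\le f(A)$, using $f\ge0$. This two-orbit splitting, rather than a single pinching, is what the standard proof rests on.
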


Combining Theorems~\ref{thm:thompson} and~\ref{thm:Bourin--Uchiyama} with the standard fact that, whenever $0\le X\le Y$ and $f$ is nondecreasing,
\[
\lambda_j\bigl(f(X)\bigr)\le \lambda_j\bigl(f(Y)\bigr), \qquad 1\le j\le n,
\]
where $\lambda_j(\cdot)$ denotes the $j$-th largest eigenvalue, and hence
\[
f(X)\le Wf(Y)W^*
\]
for some unitary $W$, we obtain the following: 
for every monotone concave function \(f:[0,\infty)\to\mathbb R\) with \(f(0)\ge0\), there exist unitaries
\(U,V\in\mathbb M_n\) such that
\begin{equation}\label{eq:triangle-concave}
	f(|A+B|)\le Uf(|A|)U^*+Vf(|B|)V^*,
	\qquad A,B\in\mathbb M_n,
\end{equation}
see \cite{BU07}.
Thus Thompson's inequality extends from the identity function \(f(t)=t\) to all monotone concave
functions with \(f(0)\ge0\).

Since \(f\) is monotone and \(f(0)\ge0\), we automatically have \(f(t)\ge0\) for all \(t\ge0\).
Conversely, every concave map \(f:[0,\infty)\to[0,\infty)\) is automatically nondecreasing.
Indeed, if \(f\) were not nondecreasing, then there would exist \(0\le s<t\) such that \(f(s)>f(t)\), and hence
\[
\frac{f(t)-f(s)}{t-s}<0.
\]
By concavity, for every \(x>t\),
\[
\frac{f(x)-f(t)}{x-t}\le \frac{f(t)-f(s)}{t-s}<0,
\]
so that
\[
f(x)\le f(t)+\frac{x-t}{t-s}\bigl(f(t)-f(s)\bigr).
\]
Since \(f(t)-f(s)<0\), the right-hand side tends to \(-\infty\) as \(x\to\infty\), contradicting the
assumption that \(f(x)\ge0\) for all \(x\ge0\). Therefore every concave map
\(f:[0,\infty)\to[0,\infty)\) is nondecreasing.

This observation shows that, for concave functions on \([0,\infty)\), nonnegativity of the range is
equivalent to monotonicity. Bourin and Lee \cite[Remark~3.13]{BL12}, and later Bourin, Harada, and Lee
\cite[Question~1.3]{BHL14}, asked whether the monotonicity assumption in
Theorem~\ref{thm:Bourin--Uchiyama} can be removed if one only assumes concavity together with \(f(0)\ge0\).

\begin{ques}[Bourin--Harada--Lee]\label{ques:Bourin-Harada-Lee}
	It is not known whether the monotonicity assumption in Theorem~\ref{thm:Bourin--Uchiyama} can be deleted,
	that is, whether the theorem remains valid for every concave function \(f:[0,\infty)\to\mathbb R\) satisfying
	\(f(0)\ge0\).
\end{ques}

We shall show that the monotonicity assumption in Theorem~\ref{thm:Bourin--Uchiyama} is indispensable.
More precisely, we construct a counterexample using the concave function \(f(t)=t-t^2\), thereby answering
Question~\ref{ques:Bourin-Harada-Lee} in the negative.

For further developments regarding the technique of unitary orbits, we refer the reader to a good survey \cite{BL12}.
\subsection{A triangle inequality for the quadratic symmetric modulus}
To obtain new triangle inequalities for the newly introduced operator moduli, let us first recall the definition of the modulus in the scalar case. For a complex number \(z=\Re z+i\,\Im z\in\mathbb C\), the modulus is classically defined by
\begin{equation}\label{eq:z-def-1}
	|z|:=\sqrt{(\Re z)^2+(\Im z)^2},
\end{equation}
which immediately yields
\begin{equation}\label{eq:z-def-2}
	|z|=(\bar z z)^{\frac{1}{2}}
\end{equation}
and
\begin{equation}\label{eq:z-def-3}
	|z|=\frac{|z|+|\bar z|}{2}.
\end{equation}

 For \(Z\in\mathbb{M}_n\), the \emph{right modulus}, in analogy with \eqref{eq:z-def-2}, is defined by
\[
|Z| := (Z^*Z)^{\frac{1}{2}},
\]
where \(Z^*\) denotes the conjugate transpose of \(Z\).
On the other hand, if we write the Cartesian decomposition \(Z=\Re Z+i\,\Im Z\), where
\[
\Re Z := \frac{Z+Z^*}{2},\qquad \Im Z := \frac{Z-Z^*}{2i},
\]
then a more geometric analogue of \eqref{eq:z-def-1} motivates the definition of the \emph{quadratic symmetric modulus}:
\[
|Z|_{\mathrm{qsym}} := \sqrt{(\Re Z)^2+(\Im Z)^2}
= \left(\frac{|Z|^2+|Z^*|^2}{2}\right)^{\frac{1}{2}}.
\]
We also define the \emph{arithmetic symmetric modulus}, following \eqref{eq:z-def-3}, by
\[
|Z|_{\mathrm{sym}} := \frac{|Z|+|Z^*|}{2}.
\]

We now turn to a related problem raised by Bourin and Lee concerning Thompson-type triangle inequalities for symmetric moduli, in comparison with Theorem~\ref{thm:thompson}.
\begin{ques}[{\cite[Question~4.9]{BL26}}]\label{ques:BL-Q49}
	Let \(|Z|_{\mathrm{(q)sym}}\) denote either the arithmetic symmetric modulus or the quadratic symmetric modulus of \(Z\in\mathbb{M}_n\). For \(A,B\in \mathbb{M}_n\), does there exist a pair of unitaries \(U,V\in \mathbb{M}_n\) such that the following Thompson-type triangle inequality holds?
	\begin{equation}\label{eq:sym_qsym}
		|A+B|_{\mathrm{(q)sym}}
		\le
		U\,|A|_{\mathrm{(q)sym}}\,U^{*}+V\,|B|_{\mathrm{(q)sym}}\,V^{*}.
	\end{equation}
\end{ques}
Our next two theorems give a complete answer to Question~\ref{ques:BL-Q49}: the inequality
\eqref{eq:sym_qsym} fails for the arithmetic symmetric modulus, whereas it does hold for the quadratic
symmetric modulus.

\begin{thm}\label{thm:non-exist-sym}
	Let $A,B\in \mathbb{M}_n$.
		In general, there do not exist unitaries $U,V$ such that
		\[
		|A+B|_{\mathrm{sym}}\le U|A|_{\mathrm{sym}}U^*+V|B|_{\mathrm{sym}}V^* .
		\] Indeed, there exist $A,B\in\mathbb{M}_2$ such that
		\[
		\bigl\||A+B|_{\mathrm{sym}}\bigr\|_{\infty}
		>
		\bigl\||A|_{\mathrm{sym}}\bigr\|_{\infty}+\bigl\||B|_{\mathrm{sym}}\bigr\|_{\infty}.
		\]
\end{thm}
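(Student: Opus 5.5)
The plan is to reduce everything to operator norms and then exhibit an explicit $2\times 2$ example. First, the norm statement implies the non-existence statement. If unitaries $U,V$ gave $|A+B|_{\mathrm{sym}}\le U|A|_{\mathrm{sym}}U^*+V|B|_{\mathrm{sym}}V^*$, then, since both sides are positive semidefinite, taking operator norms would give
\[
\bigl\||A+B|_{\mathrm{sym}}\bigr\|_\infty\le \bigl\||A|_{\mathrm{sym}}\bigr\|_\infty+\bigl\||B|_{\mathrm{sym}}\bigr\|_\infty .
\]
Here we use monotonicity of the norm on positive operators, the triangle inequality, and unitary invariance. So it suffices to find $A,B\in\mathbb M_2$ violating this scalar inequality. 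Embedding by $A\oplus 0_{n-2}$ then gives counterexamples in every $\mathbb M_n$ with $n\ge 2$.

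To guide the search, note two facts. For every $Z$ we have $\||Z|_{\mathrm{sym}}\|_\infty\le \|Z\|_\infty$, and equality holds when $Z$ is normal. The smallest possible ratio $\||Z|_{\mathrm{sym}}\|_\infty/\|Z\|_\infty=1/2$ occurs for rank-one nilpotents $Z=uv^*$ with $v\perp u$. In that case $|Z|=(\|u\|/\|v\|)vv^*$ and $|Z^*|=(\|v\|/\|u\|)uu^*$ have orthogonal supports. For any rank-one $Z=uv^*$ the same two formulas hold, so $|Z|_{\mathrm{sym}}$ is an explicit $2\times2$ positive matrix whose largest eigenvalue is computable in closed form. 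The natural strategy is therefore to take summands whose symmetric modulus has small norm relative to $\|\cdot\|_\infty$, while their sum has a symmetric modulus close to $\|A+B\|_\infty$.

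Pure nilpotent pairs do not work. For example, $E_{12}+E_{21}$, or a splitting of $\operatorname{diag}(1,-1)$ into two rank-one nilpotents, give equality at best. A quick check suggests that sums of nilpotents whose sum is normal never beat equality. I would therefore take $A$ nilpotent, say $A=tE_{12}$, and $B$ a general rank-one matrix $B=uv^*$ with $v^*u\ne0$. I would parametrize $u,v$ by a few real parameters, for instance $u=(1,a)^T$ and $v=(b,c)^T$. Then I would compute $\||A|_{\mathrm{sym}}\|=t/2$ and $\||B|_{\mathrm{sym}}\|$ from the rank-one formulas. I would also compute $\||A+B|_{\mathrm{sym}}\|$ using $\lambda_{\max}$ of a $2\times2$ matrix, $\tfrac{\mathrm{tr}}{2}+\sqrt{(\mathrm{tr}/2)^2-\det}$. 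Finally I would search for parameters with strict inequality.

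The main obstacle is this search step. My first trials, for example $A=E_{12}$ with $B=E_{11}$, $E_{22}$, or $e_2(1,1)$, all fall short. I expect the example will need $A+B$ to be far from normal while its two moduli $|A+B|$ and $|(A+B)^*|$ nearly share a top eigenvector. Both of these push $\||A+B|_{\mathrm{sym}}\|$ up toward $\|A+B\|$, while $\|A\|$ and $\|B\|$ stay of the same size. Once a candidate is found numerically, I would rationalize it, for instance by choosing entries in $\{0,\pm1,\pm i\}$ or small integers. I would then verify the strict inequality by exact computation of the three largest eigenvalues.
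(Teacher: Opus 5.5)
Your reduction to operator norms, and the embedding $Z\mapsto Z\oplus 0_{n-2}$, are correct and match the paper. However, the proposal never produces the pair $A,B$, and that explicit pair is the entire content of the theorem. More seriously, the family you commit to searching cannot contain a counterexample.

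Take $Z\in\mathbb M_2$ with singular values $\sigma_1\ge\sigma_2$ and top singular vectors $u_1,v_1$ (so $Zv_1=\sigma_1u_1$). Then $|Z|=\sigma_2 I+(\sigma_1-\sigma_2)v_1v_1^*$ and $|Z^*|=\sigma_2 I+(\sigma_1-\sigma_2)u_1u_1^*$, hence
\[
\bigl\|\sym{Z}\bigr\|_\infty=\frac{\sigma_1+\sigma_2}{2}+\frac{\sigma_1-\sigma_2}{2}\,\gamma,\qquad \gamma:=|\langle u_1,v_1\rangle| .
\]
The matrix $[v_i^*u_j]$ is a $2\times2$ unitary, so also $|v_2^*u_2|=\gamma$. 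Therefore
\[
|\Tr Z|=|\sigma_1v_1^*u_1+\sigma_2v_2^*u_2|\ge(\sigma_1-\sigma_2)\gamma ,
\]
which gives $\|\sym{Z}\|_\infty\le\frac12\bigl(\|Z\|_1+|\Tr Z|\bigr)$, with equality whenever $\rank Z\le1$. The right-hand side is subadditive in $Z$. Hence for all $A,B\in\mathbb M_2$ of rank at most one,
\[
\bigl\|\sym{A+B}\bigr\|_\infty\le\tfrac12\bigl(\|A\|_1+|\Tr A|\bigr)+\tfrac12\bigl(\|B\|_1+|\Tr B|\bigr)=\bigl\|\sym{A}\bigr\|_\infty+\bigl\|\sym{B}\bigr\|_\infty .
\]
Your family $A=tE_{12}$, $B=uv^*$ consists entirely of such pairs. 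No parameter search or rationalization within it can give strict inequality, and this is exactly why all your trials fell short.

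The missing idea is that at least one summand must have rank two and be non-normal, so that the estimate above is lossy for it. The paper takes
\[
A=\begin{pmatrix}-1&-1\\0&-1\end{pmatrix},\qquad B=\begin{pmatrix}0&-1\\0&0\end{pmatrix}.
\]
Here $B$ is the rank-one nilpotent you wanted, with $\|\sym{B}\|_\infty=\frac12$. But $A=-(I+E_{12})$ and $A+B=-(I+2E_{12})$ are both rank two. The formula above gives, for $s>0$,
\[
\bigl\|\sym{I+sE_{12}}\bigr\|_\infty=\frac{s^2+2s+4}{2\sqrt{s^2+4}} .
\]
So $\|\sym{A}\|_\infty=\frac{7}{2\sqrt5}$ and $\|\sym{A+B}\|_\infty=\frac{3}{\sqrt2}$, and
\[
\frac{3}{\sqrt2}\approx2.121>2.065\approx\frac{7}{2\sqrt5}+\frac12 .
\]
The paper verifies this by computing the three symmetric moduli explicitly. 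It then concludes exactly as in your first paragraph.
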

	
	\begin{thm}\label{thm:qsym-thompson-matrix}
		Let $A,B\in\Mn$. Then there exist two unitaries $U,V$ such that
		\[
		|A+B|_{\mathrm{qsym}}\le U|A|_{\mathrm{qsym}}U^*+V|B|_{\mathrm{qsym}}V^*.
		\]
	\end{thm}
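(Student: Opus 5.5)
Our plan is to realize the quadratic symmetric modulus as an ordinary right modulus of a larger matrix, and then apply Theorem~\ref{thm:thompson} in the larger space. For $Z\in\Mn$, define the $2n\times n$ block column
\[
\widehat Z:=\frac{1}{\sqrt2}\begin{pmatrix} Z\\ Z^*\end{pmatrix}\in\mathbb M_{2n,n}.
\]
Then $\widehat Z^*\widehat Z=\tfrac12(Z^*Z+ZZ^*)$, so $|\widehat Z|=|Z|_{\mathrm{qsym}}$. The map $Z\mapsto\widehat Z$ is real-linear, and in particular additive: $\widehat{A+B}=\widehat A+\widehat B$. Everything therefore reduces to a Thompson inequality for rectangular matrices.

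To stay with square matrices, we pad with zeros. Set $\widetilde Z:=\begin{pmatrix}\widehat Z & 0\end{pmatrix}\in\mathbb M_{2n}$. Then
\[
|\widetilde Z|=|Z|_{\mathrm{qsym}}\oplus 0_n,
\]
and $Z\mapsto\widetilde Z$ is still additive. Applying Theorem~\ref{thm:thompson} to $\widetilde A,\widetilde B\in\mathbb M_{2n}$ gives unitaries $U_0,V_0\in\mathbb M_{2n}$ with
\[
|A+B|_{\mathrm{qsym}}\oplus 0\le U_0\bigl(|A|_{\mathrm{qsym}}\oplus0\bigr)U_0^*+V_0\bigl(|B|_{\mathrm{qsym}}\oplus0\bigr)V_0^*.
\]

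The main obstacle is compressing this inequality back to $\Mn$. We cannot simply take the upper-left corner, because the compression $P U_0(\cdot)U_0^*P$ of a unitary conjugation is not itself a unitary conjugation. The right tool is eigenvalue monotonicity, exactly as in the argument recalled before \eqref{eq:triangle-concave}. Put
\[
S:=U_0\bigl(|A|_{\mathrm{qsym}}\oplus0\bigr)U_0^*,\qquad T:=V_0\bigl(|B|_{\mathrm{qsym}}\oplus0\bigr)V_0^*.
\]

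First, $S$ and $T$ are positive semidefinite of rank at most $n$. Their nonzero eigenvalues are those of $|A|_{\mathrm{qsym}}$ and $|B|_{\mathrm{qsym}}$ respectively.

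Second, we need a matrix form of the target rather than an eigenvalue bound. By Weyl's inequalities, $\lambda_{i+j-1}(S+T)\le\lambda_i(S)+\lambda_j(T)$, which only controls eigenvalues of $S+T$. So instead we restructure the argument: we want unitaries $U,V\in\Mn$ with $X\le UA'U^*+VB'V^*$, where
\[
X:=|A+B|_{\mathrm{qsym}},\qquad A':=|A|_{\mathrm{qsym}},\qquad B':=|B|_{\mathrm{qsym}}.
\]
We would redo Thompson's proof directly, along the lines of the Akemann--Anderson--Pedersen approach. Write the polar decomposition $\widehat{A+B}=W X$ with $W$ an isometry $\mathbb C^n\to\mathbb C^{2n}$ (extend it if necessary). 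Then
\[
X=W^*\widehat{A+B}=\operatorname{Re}\bigl(W^*\widehat A\bigr)+\operatorname{Re}\bigl(W^*\widehat B\bigr).
\]

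It then suffices to prove the key lemma: for any contraction $C\in\mathbb M_{n,2n}$ and $Y\in\mathbb M_{2n,n}$, there is a unitary $U\in\Mn$ with $\operatorname{Re}(CY)\le U|Y|U^*$. This is the square-matrix statement $\operatorname{Re}(K)\le U|K|U^*$ (valid by Fan--Hoffman: $\lambda_j(\operatorname{Re}K)\le s_j(K)$, then diagonalize) applied to $K=CY\in\Mn$. It is combined with $s_j(CY)\le s_j(Y)=\lambda_j(|Y|)$, which gives $\lambda_j(\operatorname{Re}(CY))\le\lambda_j(|Y|)$. Two Hermitian $n\times n$ matrices with ordered eigenvalues dominated in this way satisfy $H\le U|Y|U^*$ for a suitable unitary $U$, by aligning eigenbases. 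Applying the lemma with $Y=\widehat A$ and with $Y=\widehat B$, and recalling $|\widehat A|=|A|_{\mathrm{qsym}}$, yields the theorem.

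The step we expect to require care is the singular value bound for the rectangular product, $s_j(CY)\le\|C\|\,s_j(Y)$. This is standard via the minimax characterization. It avoids the padding issue entirely, so the padded detour above can be dropped.
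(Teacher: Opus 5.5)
Your final argument is correct, and it shares the paper's first two moves. Both use the lifting $\Gamma(Z)=\frac1{\sqrt2}\binom{Z}{Z^*}$ with $|\Gamma(Z)|=\qsym{Z}$, and both use the polar-decomposition identity $\qsym{A+B}=W^*\Gamma(A)+W^*\Gamma(B)$. You are right that $\Gamma$ is only real-linear; additivity is all either proof needs.

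The routes diverge at the last step. The paper (Proposition~\ref{prop:Thompson-rect}) writes $\qsym{A+B}=|W^*\Gamma(A)+W^*\Gamma(B)|$. It then applies the square Thompson theorem to the $n\times n$ matrices $W^*\Gamma(A)$ and $W^*\Gamma(B)$ as a black box. Finally it uses $|W^*\Gamma(A)|\le|\Gamma(A)|$, which comes from $WW^*\le I$ and operator monotonicity of $t^{1/2}$.

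You instead take real parts and prove $\Ree(W^*Y)\le U|Y|U^*$ directly, using Fan--Hoffman together with $s_j(W^*Y)\le s_j(Y)$. This is the standard Fan--Hoffman proof of Thompson's theorem, run in the rectangular setting. So your argument is self-contained: it never invokes Theorem~\ref{thm:thompson}, and it needs only singular-value domination rather than the operator inequality $|W^*Y|\le|Y|$.

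The paper's modular reduction has a different advantage. It transfers verbatim to $\B(\Hh,\Kk)$ by replacing Thompson with Akemann--Anderson--Pedersen (Proposition~\ref{prop:rect-AAP}, Theorem~\ref{thm:qsym-thompson-operator}). Your step of aligning ordered eigenbases relies on eigenvalue lists, so it is tied to finite dimensions.

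You were also right to discard the zero-padding detour. The unitaries that Thompson's theorem produces in $\mathbb M_{2n}$ need not respect the block decomposition, so that inequality cannot simply be compressed back to $\Mn$.
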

We remark that Theorem~\ref{thm:qsym-thompson-matrix} can be regarded as an operator version of the classical triangle inequality in the complex plane,
	$|a+b|\le |a|+|b|$ for $a,b\in\mathbb{C}$, a result of historical importance.
	In contrast to Theorem~\ref{thm:thompson}, here the modulus is defined via the real and imaginary parts of the \emph{operator itself}. 
	In this sense, it may be viewed as a more faithful operator analogue of the triangle inequality.
	
	We also characterize the equality case in Theorem~\ref{thm:qsym-thompson-matrix} as follows.
	\begin{thm}\label{thm:equality-case-matrix}
		Let $A,B\in\Mn$. Assume that there exist unitaries $U,V\in\Mn$ such that
		\[
		\qsym{A+B}=U\,\qsym{A}\,U^*+V\,\qsym{B}\,V^*.
		\]
		Define the block matrix
		\[
		\Gamma(T):=\frac{1}{\sqrt2}\begin{pmatrix}T\\ T^*\end{pmatrix}\in \mathbb{M}_{2n,n}.
		\]
		Then there exists a partial isometry $W\in\mathbb{M}_{2n,n}$ such that
		\[
		\Gamma(A)=W\,\qsym{A},
		\qquad
		\Gamma(B)=W\,\qsym{B}.
		\]
		Equivalently, $\Gamma(A)$ and $\Gamma(B)$ admit polar decompositions sharing a common partial isometry factor.
	\end{thm}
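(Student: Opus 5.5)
Everything rests on the identity
\[
\Gamma(T)^*\Gamma(T)=\tfrac12\,(T^*T+TT^*)=\qsym{T}^2 ,
\]
that is, $\qsym{T}=|\Gamma(T)|$, where $|\cdot|$ is the usual right modulus of the rectangular matrix $\Gamma(T)\in\mathbb M_{2n,n}$. Since $\Gamma$ is real-linear, $\Gamma(A+B)=\Gamma(A)+\Gamma(B)$. So the equality hypothesis becomes
\[
|\Gamma(A)+\Gamma(B)|=U|\Gamma(A)|U^*+V|\Gamma(B)|V^* .
\]
The goal is to transfer Thompson's equality characterization, Theorem~\ref{thm:thompson-2}, to this setting.

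We first pass to square matrices. Pad $\Gamma(A),\Gamma(B)$ with $n$ zero columns to get $\tilde A=[\Gamma(A)\ 0]$ and $\tilde B=[\Gamma(B)\ 0]$ in $\mathbb M_{2n}$. Then
\[
|\tilde A|=\qsym{A}\oplus 0,\qquad |\tilde B|=\qsym{B}\oplus 0,\qquad |\tilde A+\tilde B|=\qsym{A+B}\oplus 0 .
\]
Set $\tilde U=U\oplus I_n$ and $\tilde V=V\oplus I_n$. These are unitaries in $\mathbb M_{2n}$, and they give
\[
|\tilde A+\tilde B|=\tilde U|\tilde A|\tilde U^*+\tilde V|\tilde B|\tilde V^* .
\]
Theorem~\ref{thm:thompson-2} now yields a unitary $\tilde W\in\mathbb M_{2n}$ with $\tilde A=\tilde W|\tilde A|$ and $\tilde B=\tilde W|\tilde B|$. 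Let $W_0\in\mathbb M_{2n,n}$ be the first $n$ columns of $\tilde W$, an isometry. Comparing first block columns gives
\[
\Gamma(A)=W_0\qsym{A},\qquad \Gamma(B)=W_0\qsym{B}.
\]

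It remains to upgrade the isometry $W_0$ to a partial isometry whose initial space is tied to the supports, so that the relations are genuinely polar decompositions. Let $P$ be the projection onto $\Ran\qsym{A}+\Ran\qsym{B}$, and put $W=W_0P$. Because $W_0$ is an isometry, $W$ is a partial isometry with initial space $\Ran P$. Since $P\qsym{A}=\qsym{A}$ and $P\qsym{B}=\qsym{B}$, the relations $\Gamma(A)=W\qsym{A}$ and $\Gamma(B)=W\qsym{B}$ still hold.

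For the phrase ``sharing a common partial isometry factor,'' take the polar factor of $\Gamma(A)$ to be $WP_A$, where $P_A$ is the support projection of $\qsym{A}$, and define $WP_B$ similarly. Both are restrictions of the single partial isometry $W$, which is the intended meaning; if an exact common factor is required, $W$ itself serves.

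The main obstacle is the padding step. Theorem~\ref{thm:thompson-2} is stated for square matrices, so it must be checked that the unitaries $U,V$ extend compatibly. The block-diagonal extension handles this because every modulus in question vanishes on the added coordinates. A second point to verify is that Thompson's equality theorem really outputs $\tilde W$ with the zero block columns respected. This holds automatically: the equations $\tilde A=\tilde W|\tilde A|$ and $\tilde B=\tilde W|\tilde B|$ read blockwise only involve the first $n$ columns of $\tilde W$.
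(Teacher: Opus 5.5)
Your argument is correct and follows the same strategy as the paper: embed the liftings $\Gamma(A),\Gamma(B)\in\mathbb{M}_{2n,n}$ into square matrices, extend $U,V$ block-diagonally, invoke Thompson's equality theorem, read off a block of the resulting unitary, and compress by $P=\supp(\qsym{A}+\qsym{B})$. The one difference is that you pad with zero columns into $\mathbb{M}_{2n}$, whereas the paper uses the corner embedding $\begin{pmatrix}0&0\\ \Gamma(A)&0\end{pmatrix}\in\mathbb{M}_{3n}$. As a result, the block you extract is an entire block column of a unitary and is therefore already an isometry. This lets you skip the paper's Step~3 (showing $W_{11}P=0$ and then deducing $W^*W=P$ from unitarity), and it even yields a common isometry $W_0$.
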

	
	The next question is whether Theorem~\ref{thm:qsym-thompson-matrix} remains true in the infinite-dimensional setting. Our first observation is that it does: the quadratic symmetric Thompson type inequality extends to
	$\B(\Hh)$, provided one replaces unitaries by isometries, exactly as in Theorem~\ref{thm:AAP}.
	
	\begin{thm}\label{thm:qsym-thompson-operator}
		Let $A,B\in\B(\Hh)$. Then there exist isometries $U,V\in\B(\Hh)$ such that
		\[
		\qsym{A+B}\ \le\ U\,\qsym{A}\,U^* \;+\; V\,\qsym{B}\,V^* .
		\]
	\end{thm}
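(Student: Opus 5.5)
We reduce the statement to Theorem~\ref{thm:AAP} through the column embedding $\Gamma(T)$, now viewed as an operator from $\Hh$ to $\Hh\oplus\Hh$. Its right modulus is
\[
\Gamma(T)^*\Gamma(T)=\tfrac12\bigl(T^*T+TT^*\bigr)=\qsym{T}^2,
\qquad\text{so}\qquad |\Gamma(T)|=\qsym{T}.
\]
Since $\Gamma$ is real-linear, $\Gamma(A+B)=\Gamma(A)+\Gamma(B)$. The quadratic symmetric modulus is therefore the right modulus of a sum of two operators, and the task becomes a Thompson-type inequality for $\Hh\to\Hh\oplus\Hh$ operators whose moduli live on $\Hh$.

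To invoke Theorem~\ref{thm:AAP}, which is stated for operators on a single space, we pad the operators. Set $\widetilde A=[\Gamma(A)\ \ 0]$ and $\widetilde B=[\Gamma(B)\ \ 0]$, both acting on $\Hh\oplus\Hh$. Then
\[
|\widetilde A|=\qsym{A}\oplus 0,\qquad |\widetilde B|=\qsym{B}\oplus0,\qquad |\widetilde A+\widetilde B|=\qsym{A+B}\oplus 0.
\]
Theorem~\ref{thm:AAP} gives isometries $U_0,V_0$ on $\Hh\oplus\Hh$ with
\[
\qsym{A+B}\oplus0\le U_0(\qsym{A}\oplus0)U_0^*+V_0(\qsym{B}\oplus0)V_0^*.
\]
Let $J:\Hh\to\Hh\oplus\Hh$ be the inclusion into the first summand and compress by $J^*\cdot J$. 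The left side becomes $\qsym{A+B}$. Writing $U_1:=U_0J$ and $V_1:=V_0J$, both isometries from $\Hh$ into $\Hh\oplus\Hh$, the right side becomes
\[
(J^*U_1)\qsym{A}(J^*U_1)^*+(J^*V_1)\qsym{B}(J^*V_1)^*.
\]
Here $C=J^*U_1$ and $D=J^*V_1$ are only contractions on $\Hh$, not isometries.

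\textbf{Main obstacle.} We must upgrade $C\qsym{A}C^*+D\qsym{B}D^*$ to a sum of isometric conjugates $U\qsym{A}U^*+V\qsym{B}V^*$. Inequalities in the direction $CXC^*\le UXU^*$ are not automatic for a contraction $C$. There are two ways to handle this.

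The first is to avoid compression altogether by reproducing the Akemann--Anderson--Pedersen argument directly for operators $X,Y\in\B(\Hh,\Kk)$. The AAP proof rests on the identity
\[
|X+Y|=W^*(X+Y)=\Ree\bigl(W^*X\bigr)+\Ree\bigl(W^*Y\bigr)
\]
(using that $|X+Y|$ is self-adjoint, with $W$ the partial isometry in the polar decomposition of $X+Y$), together with the lemma that for a contraction $K$, $\Ree(KX)\le$ an isometric conjugate of $|X|$. That lemma holds verbatim when $X$ maps into another space, since it only uses
\[
\Ree(KX)\le \tfrac12\bigl(|X|+|K^*\cdots|\bigr)
\]
type bounds and a block-matrix/dilation argument on $\Hh$. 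Applied to $X=\Gamma(A)$ and $Y=\Gamma(B)$, this yields isometries on $\Hh$ itself, which is what the statement requires.

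The second, if we prefer to stay with the compression, uses that $\qsym{A}$ and $\qsym{B}$ are positive. The map
\[
X\mapsto \begin{pmatrix}C&(I-CC^*)^{1/2}\end{pmatrix}\,(X\oplus X')\,\begin{pmatrix}C&(I-CC^*)^{1/2}\end{pmatrix}^*
\]
suggests replacing $C$ by a co-isometric row. This fails to give an isometry on $\Hh$ unless $\Hh$ absorbs $\Hh\oplus\Hh$, and that absorption requires infinite dimension. We would therefore split into two cases: finite-dimensional $\Hh$, which is Theorem~\ref{thm:qsym-thompson-matrix} directly, and infinite-dimensional $\Hh$, where $\Hh\cong\Hh\oplus\Hh$ allows the identification.

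We expect the first route, re-deriving the AAP lemma for rectangular $X$, to be the cleanest. The real check is that the lemma's isometries can be taken on the domain space $\Hh$, which holds because only $|X|$ and $|X+Y|$, all operators on $\Hh$, appear.
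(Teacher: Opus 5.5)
Your lifting step is right (and you are correct that $\Gamma$ is only real-linear, which is all additivity needs). So is your diagnosis that padding followed by compression leaves contractions rather than isometries. The gap is in the route you prefer.

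That route rests on a rectangular lemma about the internals of the Akemann--Anderson--Pedersen proof: for a contraction $K$, $\Ree(KX)\le U\absop{X}U^*$ with $U$ an isometry on $\Hh$. This is not the statement of Theorem~\ref{thm:AAP}, which is the only form available. Your justification does not establish it either. Bounds of the type $\Ree(T)\le\frac12(\absop{T}+\absop{T^*})$ cannot in general be pushed below a single orbit of $\absop{T}$. For $T=\begin{pmatrix}0&1\\0&0\end{pmatrix}$ one gets $\frac12(\absop{T}+\absop{T^*})=\frac12 I_2$, which lies below no unitary conjugate of the rank-one projection $\absop{T}$. The remark that ``only operators on $\Hh$ appear'' is not a proof, so as written the key inequality is asserted rather than derived.

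The missing observation is that no internals are needed. You already have $\absop{X+Y}=W^*X+W^*Y$ with $W^*X,W^*Y\in\B(\Hh)$. Since this sum is positive, it is its own modulus. So Theorem~\ref{thm:AAP}, used as a black box on $W^*X$ and $W^*Y$, gives isometries $U,V\in\B(\Hh)$ with $\absop{X+Y}\le U\absop{W^*X}U^*+V\absop{W^*Y}V^*$. Then $\absop{W^*X}^2=X^*WW^*X\le X^*X$ and operator monotonicity of $t\mapsto t^{1/2}$ give $\absop{W^*X}\le\absop{X}$, and likewise for $Y$. With $X=\Gamma(A)$, $Y=\Gamma(B)$, this is exactly the paper's argument (Proposition~\ref{prop:rect-AAP}). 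These are also precisely the two facts you would need to pass from the square lemma to your rectangular one: apply it to $KX\in\B(\Hh)$ and use $\absop{KX}\le\absop{X}$.

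Your second route can also be rescued, but only if the identification replaces the padding. For infinite-dimensional $\Hh$, compose $\Gamma(A),\Gamma(B)$ on the left with a unitary $\Psi:\Hh\oplus\Hh\to\Hh$. Then $\Psi\Gamma(A),\Psi\Gamma(B)\in\B(\Hh)$ have moduli $\qsym{A},\qsym{B}$, and Theorem~\ref{thm:AAP} applies directly. The finite-dimensional case is Theorem~\ref{thm:qsym-thompson-matrix}. Dilating the compressed contraction $C$ to a co-isometric row, as you sketch, yields a compression of $X\oplus X'$, not an isometric conjugate of $X$.
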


	Having established the triangle inequality, we turn to its equality case.
	Motivated by Theorem~\ref{thm:AH-classical}, we  prove an
	Ando--Hayashi type characterization for the quadratic symmetric modulus.
	Since $\qsym{T}$ is not, in general, the modulus of $T$ itself, the correct formulation uses a canonical lifting.
	
	\begin{thm}\label{thm:main-equality}
		Let $A,B\in\B(\Hh)$. Assume that
		\begin{equation*}
			\qsym{A+B}=\qsym{A}+\qsym{B}.
		\end{equation*}
		Define the linear lifting $\Gamma:\B(\Hh)\to\B(\Hh,\Hh\oplus\Hh)$ by
		\begin{equation*}
			\Gamma(T):=\frac{1}{\sqrt2}\binom{T}{T^*}.
		\end{equation*}
		Then there exists a partial isometry $W\in\B(\Hh,\Hh\oplus\Hh)$ such that
		\[
		\Gamma(A)=W\,\qsym{A},
		\qquad
		\Gamma(B)=W\,\qsym{B}.
		\]
		Equivalently, $\Gamma(A)$ and $\Gamma(B)$ admit polar decompositions sharing a common partial isometry factor.
	\end{thm}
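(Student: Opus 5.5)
The plan is to reduce Theorem~\ref{thm:main-equality} to the classical Ando--Hayashi result, Theorem~\ref{thm:AH-classical}, via the lifting $\Gamma$. The key identity is
\[
\Gamma(T)^*\Gamma(T)=\tfrac12\bigl(T^*T+TT^*\bigr)=\qsym{T}^2,
\]
so that $\qsym{T}=|\Gamma(T)|$ is the right modulus of the operator $\Gamma(T)\in\B(\Hh,\Hh\oplus\Hh)$. Since $\Gamma$ is real-linear and in fact complex-linear in the first coordinate only ($T^*$ is conjugate-linear), we check additivity: $\Gamma(A+B)=\Gamma(A)+\Gamma(B)$, which holds because $(A+B)^*=A^*+B^*$. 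The hypothesis then reads
\[
|\Gamma(A)+\Gamma(B)|=|\Gamma(A)|+|\Gamma(B)|.
\]

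Theorem~\ref{thm:AH-classical} is stated for operators on a single space $\Hh$, whereas $\Gamma(A),\Gamma(B)$ map $\Hh\to\Hh\oplus\Hh$. I will embed them as square operators on $\mathcal{L}:=\Hh\oplus\Hh\oplus\Hh$ (or simply on $\Hh\oplus\Hh$ after padding). Define
\[
\widetilde T:=\begin{pmatrix}\Gamma(T)&0\end{pmatrix}:\Hh\oplus(\Hh\oplus\Hh)\to\Hh\oplus\Hh,
\]
and then view $\widetilde T$ as an operator on $\Kk:=\Hh\oplus\Hh\oplus\Hh$ by composing with the inclusion $\Hh\oplus\Hh\hookrightarrow\Kk$. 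Then $|\widetilde T|=|\Gamma(T)|\oplus 0=\qsym{T}\oplus 0$, and the map $T\mapsto\widetilde T$ is additive. The hypothesis gives $|\widetilde A+\widetilde B|=|\widetilde A|+|\widetilde B|$ on $\Kk$.

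Theorem~\ref{thm:AH-classical} now yields a partial isometry $\widetilde U\in\B(\Kk)$ with $\widetilde A=\widetilde U|\widetilde A|$ and $\widetilde B=\widetilde U|\widetilde B|$. Restrict to the first copy of $\Hh$ and compress onto the range copy $\Hh\oplus\Hh$. This gives $\Gamma(A)=W_0\qsym{A}$ and $\Gamma(B)=W_0\qsym{B}$, where $W_0:=P\widetilde U|_{\Hh}$ and $P$ is the projection onto $\Hh\oplus\Hh$.

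The main obstacle is that $W_0$ need not be a partial isometry. I will fix this by replacing $W_0$ with its restriction to $M:=\overline{\Ran\qsym{A}+\Ran\qsym{B}}$. On $\Ran|\widetilde A|$ the operator $\widetilde U$ acts isometrically into $\Ran\widetilde A\subseteq\Hh\oplus\Hh$, so $P$ does nothing there. The same holds for $B$.

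The delicate point is that $\widetilde U$ is isometric on the closed span $M$ and not merely on each range separately. In the Ando--Hayashi proof, the common partial isometry is isometric on $\overline{\Ran|A|+\Ran|B|}$. I will either cite that feature of the proof or argue directly. For the direct argument, $\widetilde U^*\widetilde U$ is a projection $E$ that fixes both $\Ran|\widetilde A|$ and $\Ran|\widetilde B|$, because $\widetilde U|\widetilde A|$ has modulus $|\widetilde A|$. Hence $E$ fixes their closed span, and so $\widetilde U$ is isometric on $M$ with range inside $\Hh\oplus\Hh$.

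Set $W:=\widetilde U E_M$ restricted to $\Hh$, where $E_M$ is the projection onto $M\subseteq\Hh$. Then $W$ is a partial isometry $\Hh\to\Hh\oplus\Hh$ with initial space $M$, and $W\qsym{A}=\Gamma(A)$, $W\qsym{B}=\Gamma(B)$, as required. The equivalent reformulation about polar decompositions follows because $W$ restricted to $\overline{\Ran\qsym{A}}$ is then exactly the polar factor of $\Gamma(A)$, and likewise for $B$.
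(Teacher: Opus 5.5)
Your proposal is correct and is essentially the paper's proof. The paper also lifts via $\Gamma$ (only additivity and $|\Gamma(T)|=\qsym{T}$ are used) and embeds $\Gamma(A),\Gamma(B)$ as corner blocks of operators on $\Hh\oplus(\Hh\oplus\Hh)$ so that Ando--Hayashi applies; this is its Proposition~\ref{prop:AH-two-space}. It then repairs the extracted block by multiplying with $P=\supp(\qsym{A}+\qsym{B})$, whose range is your $M$, using exactly your observation that $\supp(|\widetilde A|),\supp(|\widetilde B|)\le \widetilde U^*\widetilde U$. Your remark that $\Gamma$ is only real-linear is a correct refinement of the paper's wording, and it costs nothing.
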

Besides triangle inequalities, we also establish a concave-function subadditivity theorem; compare this with \eqref{eq:triangle-concave}.
	\begin{thm}\label{thm:intro-main-concave}
		Let $A,B\in\mathbb M_n$ and let $f:[0,\infty)\to[0,\infty)$ be  concave. Then there exist unitaries $U,V\in\mathbb M_n$ such that
		\[
		f\bigl(\qsym{A+B}\bigr)\le Uf\bigl(\qsym{A}\bigr)U^*+Vf\bigl(\qsym{B}\bigr)V^*.
		\]
	\end{thm}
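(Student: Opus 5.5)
The plan is to reduce the statement to the classical concave subadditivity \eqref{eq:triangle-concave} for the right modulus, applied to the lifted operators $\Gamma(A),\Gamma(B)\in\mathbb{M}_{2n,n}$. The key identity is
\[
\Gamma(T)^*\Gamma(T)=\tfrac12\bigl(T^*T+TT^*\bigr)=\qsym{T}^2 ,
\]
so that $|\Gamma(T)|=\qsym{T}$. Since $\Gamma$ is real-linear (indeed $\Gamma(A+B)=\Gamma(A)+\Gamma(B)$), we get $\qsym{A+B}=|\Gamma(A)+\Gamma(B)|$. The statement thus becomes a Thompson-type concave inequality for rectangular matrices. As noted in the introduction, every concave $f:[0,\infty)\to[0,\infty)$ is nondecreasing with $f(0)\ge 0$, so the monotonicity hypothesis needed there is automatic.

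To handle rectangularity, I would pad with zeros. Set $\tilde A=\bigl(\Gamma(A)\ \ 0\bigr)\in\mathbb M_{2n}$, and similarly $\tilde B$. Then $|\tilde A|=\qsym{A}\oplus 0_n$, $|\tilde B|=\qsym{B}\oplus 0_n$ and $|\tilde A+\tilde B|=\qsym{A+B}\oplus 0_n$. Applying \eqref{eq:triangle-concave} in $\mathbb M_{2n}$ gives unitaries $U_0,V_0\in\mathbb M_{2n}$ with
\[
f(\qsym{A+B})\oplus f(0)I_n\le U_0\bigl(f(\qsym{A})\oplus f(0)I_n\bigr)U_0^*+V_0\bigl(f(\qsym{B})\oplus f(0)I_n\bigr)V_0^* .
\]

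The main obstacle is cutting this back down to size $n$ with unitaries in $\mathbb M_n$. Compressing to the first block gives only contractions, and the $f(0)$ terms also intrude. I would avoid the padding altogether and instead rerun the proof of Thompson's theorem directly, in the eigenvalue form that Thompson's argument actually yields. Concretely, for $X,Y\in\mathbb M_{m,n}$ one has $|X+Y|\le U|X|U^*+V|Y|V^*$ with $U,V\in\mathbb M_n$. The reason is that the standard proof (via $|X+Y|\le$ a sum of compressions, using the polar decomposition $X+Y=W|X+Y|$, then Ky Fan/Weyl and the unitary-orbit lemma) only uses $W^*X$ and $W^*Y$, which are $n\times n$. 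Explicitly, with $W\in\mathbb M_{m,n}$ a partial isometry, $|X+Y|=W^*X+W^*Y=\Ree(W^*X)+\Ree(W^*Y)$. One then uses the known lemma that $\Ree Z\le U|Z|U^*$ for square $Z$, together with $|W^*X|\le$ a unitary conjugate of $|X|$, which holds since $\lambda_j(|W^*X|)\le s_j(X)$ because $W^*$ is a contraction. This yields the rectangular Thompson inequality in $\mathbb M_n$.

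With that, I would apply Theorem~\ref{thm:Bourin--Uchiyama} to $0\le |\Gamma(A+B)|\le U\qsym{A}U^*+V\qsym{B}V^*$. Using monotonicity of $f$ and the eigenvalue comparison for $0\le X\le Y$ noted in the introduction, there is a unitary $W_1$ with $f(\qsym{A+B})\le W_1 f(U\qsym{A}U^*+V\qsym{B}V^*)W_1^*$. Theorem~\ref{thm:Bourin--Uchiyama} then splits $f$ of the sum into $U'f(\qsym{A})U'^*+V'f(\qsym{B})V'^*$. Composing the unitaries gives the claim.
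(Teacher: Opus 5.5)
Your proposal is correct and follows essentially the paper's route. You lift by $\Gamma$ so that $|\Gamma(T)|=\qsym{T}$, obtain a rectangular Thompson inequality with unitaries in $\mathbb M_n$ from the polar decomposition $X+Y=W|X+Y|$ and the square compressions $W^*X,W^*Y$, and then combine the eigenvalue monotonicity of $f$ with the Aujla--Uchiyama theorem (Theorem~\ref{thm:Bourin--Uchiyama}). The only cosmetic difference is that you inline Thompson's argument via $\Ree Z\le U|Z|U^*$ and an eigenvalue bound for $|W^*X|$, whereas the paper applies Thompson's theorem to $W^*A,W^*B$ directly and uses the operator inequality $|W^*A|\le|A|$.
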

A number of elegant operator inequalities can be derived from Theorem~\ref{thm:intro-main-concave}. For example,	taking $f(t)=t^p$ with $0<p<1$ in Theorem~\ref{thm:intro-main-concave} yields
	\[
	\qsym{A+B}^p\le U\qsym{A}^pU^*+V\qsym{B}^pV^*,\qquad 0<p<1.
	\]
	
Let $\unorm{\cdot}$ denote a unitarily invariant norm on $\Mn$. An immediate corollary of Theorem~\ref{thm:intro-main-concave} is as follows.
	\begin{cor}
		Let $A,B\in\mathbb M_n$ and let $f:[0,\infty)\to[0,\infty)$ be  concave. Then for any unitarily invariant norm, we have
		\[
		\unorm{f\bigl(\qsym{A+B}\bigr)}	\le  \unorm{f\bigl(\qsym{A}\bigr)}+ \unorm{f\bigl(\qsym{B}\bigr)}.
		\]
	\end{cor}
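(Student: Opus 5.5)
The statement to prove is the Corollary immediately preceding this point. I will deduce it directly from Theorem~\ref{thm:intro-main-concave}, combined with two standard facts about unitarily invariant norms: they are monotone on positive semidefinite matrices, and they satisfy the triangle inequality.

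\textbf{Step 1: invoke the theorem.} Fix \(A,B\in\mathbb M_n\) and a concave \(f:[0,\infty)\to[0,\infty)\). Theorem~\ref{thm:intro-main-concave} gives unitaries \(U,V\) with
\[
0\le f\bigl(\qsym{A+B}\bigr)\le Uf\bigl(\qsym{A}\bigr)U^*+Vf\bigl(\qsym{B}\bigr)V^*.
\]
Both sides are positive semidefinite, because \(f\ge0\) on the spectra of the positive matrices involved.

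\textbf{Step 2: monotonicity of the norm.} If \(0\le X\le Y\), then Weyl's monotonicity principle gives \(\lambda_j(X)\le\lambda_j(Y)\) for every \(j\). For positive matrices the singular values coincide with the eigenvalues, so \(X\) is weakly majorized (indeed dominated entrywise) by \(Y\) in singular values. The Ky Fan dominance theorem then yields \(\unorm{X}\le\unorm{Y}\). Applying this to the inequality from Step 1 gives
\[
\unorm{f\bigl(\qsym{A+B}\bigr)}\le\unorm{Uf\bigl(\qsym{A}\bigr)U^*+Vf\bigl(\qsym{B}\bigr)V^*}.
\]

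\textbf{Step 3: triangle inequality and unitary invariance.} By the triangle inequality, the right-hand side above is at most \(\unorm{Uf(\qsym{A})U^*}+\unorm{Vf(\qsym{B})V^*}\). Unitary invariance turns this into \(\unorm{f(\qsym{A})}+\unorm{f(\qsym{B})}\), which is the claimed bound.

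None of these steps is a real obstacle. The only point needing care is Step 2: it must use the positivity of both sides, so that singular values equal eigenvalues and the Löwner order controls them.
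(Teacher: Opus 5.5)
Your proposal is correct and is the argument the paper intends: it calls this corollary an immediate consequence of Theorem~\ref{thm:intro-main-concave} and gives no separate proof. Your chain (the orbit inequality, then monotonicity of unitarily invariant norms on positive matrices via Weyl and Fan dominance, then the triangle inequality and unitary invariance) supplies exactly the routine details that are left implicit there.
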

	Taking traces in Theorem~\ref{thm:intro-main-concave} yields a Rotfel'd type trace inequality for  the  quadratic symmetric modulus.
	\begin{cor}
		Let $A,B\in\mathbb M_n$ and let $f:[0,\infty)\to[0,\infty)$ be  concave. Then 
		\[
		\tr	f\bigl(\qsym{A+B}\bigr)\le \tr f\bigl(\qsym{A}\bigr)+\tr f\bigl(\qsym{B}\bigr).
		\]
	\end{cor}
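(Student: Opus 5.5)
The plan is to deduce this directly from Theorem~\ref{thm:intro-main-concave} by taking traces. No new operator-theoretic input is needed, since the trace is monotone on Hermitian matrices and invariant under unitary conjugation.

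First, fix $A,B\in\mathbb M_n$ and a concave $f:[0,\infty)\to[0,\infty)$. Theorem~\ref{thm:intro-main-concave} gives unitaries $U,V\in\mathbb M_n$ with
\[
f\bigl(\qsym{A+B}\bigr)\le Uf\bigl(\qsym{A}\bigr)U^*+Vf\bigl(\qsym{B}\bigr)V^*.
\]
All three operators $\qsym{A+B}$, $\qsym{A}$, $\qsym{B}$ are positive semidefinite, so the functional calculus makes sense. Moreover $f\ge 0$, so every term is a positive semidefinite Hermitian matrix.

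Second, I use that $X\le Y$ for Hermitian $X,Y$ implies $\tr X\le\tr Y$. This holds because $Y-X\ge0$ has nonnegative diagonal entries in any orthonormal basis, hence nonnegative trace. Applying this to the displayed inequality gives
\[
\tr f\bigl(\qsym{A+B}\bigr)\le \tr\bigl(Uf(\qsym{A})U^*\bigr)+\tr\bigl(Vf(\qsym{B})V^*\bigr).
\]

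Finally, by cyclicity of the trace, $\tr(UXU^*)=\tr(XU^*U)=\tr X$ for any unitary $U$. The right-hand side is therefore $\tr f(\qsym{A})+\tr f(\qsym{B})$, which is the claim.

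There is no real obstacle here; all the difficulty sits in Theorem~\ref{thm:intro-main-concave}. Alternatively, the statement is the special case of the preceding corollary for the trace norm $\|\cdot\|_1$. That norm is unitarily invariant, and it coincides with the trace on the positive semidefinite matrices $f(\qsym{\cdot})$, because $f\ge0$.
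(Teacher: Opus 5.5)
Your proof is correct and is exactly the paper's argument: the corollary is obtained by taking traces in Theorem~\ref{thm:intro-main-concave}, using monotonicity of the trace on Hermitian matrices and its invariance under unitary conjugation. Your alternative via the trace norm in the preceding corollary is also valid, since $f\ge 0$ makes all the matrices involved positive semidefinite.
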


We can also derive the following corollary, which may be regarded as an operator version of the scalar
inequality \(|a|-|b|\le |a-b|\) for \(a,b\in\mathbb C\).
	\begin{cor}\label{cor:triangle-plus}
		Let $A,B\in\mathbb M_n$ and let $f:[0,\infty)\to[0,\infty)$ be  concave. 
		Then there exist unitaries $U,V\in\mathbb M_n$ such that
		\[
		Uf\bigl(\qsym{A}\bigr)U^*-Vf\bigl(\qsym{B}\bigr)V^*
		\le
		f\bigl(\qsym{A-B}\bigr).
		\]
	\end{cor}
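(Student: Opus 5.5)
We prove Corollary~\ref{cor:triangle-plus} by applying Theorem~\ref{thm:intro-main-concave} to the pair $(A-B,\,B)$ and then rearranging. Since $(A-B)+B=A$, that theorem gives unitaries $U_0,V_0\in\Mn$ with
\[
f\bigl(\qsym{A}\bigr)\le U_0 f\bigl(\qsym{A-B}\bigr)U_0^*+V_0 f\bigl(\qsym{B}\bigr)V_0^*.
\]
Conjugating by $U_0^*$ gives
\[
U_0^* f\bigl(\qsym{A}\bigr)U_0\le f\bigl(\qsym{A-B}\bigr)+U_0^*V_0 f\bigl(\qsym{B}\bigr)V_0^*U_0 .
\]
Now set $U:=U_0^*$ and $V:=U_0^*V_0$; both are unitary. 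Moving the second term to the left-hand side yields
\[
Uf\bigl(\qsym{A}\bigr)U^*-Vf\bigl(\qsym{B}\bigr)V^*\le f\bigl(\qsym{A-B}\bigr),
\]
which is the claim.

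The proof uses only that the Loewner order is preserved under unitary congruence and under adding a Hermitian term to both sides. No symmetry such as $\qsym{-T}=\qsym{T}$ is needed, because we apply Theorem~\ref{thm:intro-main-concave} directly to $A-B$ and $B$. (That identity does hold anyway, since $|{-T}|=|T|$ and $|(-T)^*|=|T^*|$, so one could equally start from the pair $(A,\,B-A)$.)

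The only point to check is that Theorem~\ref{thm:intro-main-concave} is stated for arbitrary $A,B\in\Mn$ and every concave $f:[0,\infty)\to[0,\infty)$, so it applies to the pair $(A-B,B)$ with no further hypotheses. Once that theorem is available there is no real obstacle.
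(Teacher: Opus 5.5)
Your proposal is correct and matches the paper's proof exactly: the paper also applies Theorem~\ref{thm:intro-main-concave} to the pair $(A-B,\,B)$, conjugates by the adjoint of the first unitary, and sets $U:=W^*$ and $V:=W^*T$. Your choice of unitaries $U:=U_0^*$ and $V:=U_0^*V_0$ is the same up to notation.
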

	
If \(A\in \mathbb{M}_n\) is invertible, its condition number is defined by
\[
\kappa(A):=\|A\|_\infty \|A^{-1}\|_\infty,
\]
where \(\|\cdot\|_\infty\) denotes the spectral norm. Lee \cite{Lee12} proved the following triangle inequality: if \(A_1,\dots,A_m\in \mathbb{M}_n\) are invertible matrices whose condition numbers are bounded by \(\omega>0\), then
\begin{equation}\label{eq:Lee}
	\absop{A_1+\cdots+A_m}
	\le
	\frac{\omega+1}{2\sqrt{\omega}}\,
	\bigl(\absop{A_1}+\cdots+\absop{A_m}\bigr).
\end{equation}
Bourin and Lee \cite[p.~1096, line 4]{BL12} remarked that
 the bound \eqref{eq:Lee} is independent of the number of operators, although it is a rather small bound, it is not known whether it is sharp, see also \cite[Question~2]{Lee12}. In this paper, we will give an example to illustrate its sharpness. Moreover, we obtain the following more refined versions of \eqref{eq:Lee} in terms of the three introduced moduli.
 \begin{thm}\label{thm:triangle-another}
 	Let \(A_1,\dots,A_m\in \B(\Hh)\) be invertible operators whose condition numbers are $\kappa_1,\ldots,\kappa_m$.
 	Then
 	\[
 	\bigl|A_1+\cdots+A_m\bigr|_{\mathrm{((q)sym)}}
 	\le
 	\frac{\kappa_1+1}{2\sqrt{\kappa_1}}\,
 	|A_1|_{\mathrm{((q)sym)}}+\cdots+ 	\frac{\kappa_m+1}{2\sqrt{\kappa_m}}\,|A_m|_{\mathrm{((q)sym)}},
 	\]
where the coefficient function
$
c(\kappa):=\frac{\kappa+1}{2\sqrt{\kappa}}
$
is individually sharp for each term: for every \(\kappa\ge 1\), the coefficient \(c(\kappa)\)
in front of a term of condition number \(\kappa\) cannot be replaced by any smaller constant,
even if all the other coefficients are kept unchanged.
 \end{thm}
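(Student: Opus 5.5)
The plan is to reduce all three versions to one scalar estimate, the Kantorovich inequality, through a single linear identity for the modulus of a sum. For the right modulus, write $S:=A_1+\cdots+A_m$ and take its polar decomposition $S=V|S|$. Here $V$ is a partial isometry, which is enough in $\B(\Hh)$, because all that is used is $V^*S=|S|$. Then
\[
|S|=\Ree(V^*S)=\sum_{i}\Ree(V^*A_i)=\sum_i\Ree\bigl(V^*U_i|A_i|\bigr),
\]
where $A_i=U_i|A_i|$ with $U_i$ unitary, since $A_i$ is invertible. So it suffices to prove the following lemma. \emph{If $P\ge0$ is invertible with $\sigma(P)\subset[a,b]$ and $b/a\le\kappa$, and $W$ is a contraction, then $\Ree(WP)\le c(\kappa)P$.}

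To prove the lemma, fix a vector $x$. Cauchy--Schwarz gives
\[
\Ree\langle WPx,x\rangle\le|\langle Px,W^*x\rangle|\le\|Px\|\,\|x\|.
\]
The Kantorovich inequality $\langle P^2x,x\rangle\langle x,x\rangle\le\frac{(a+b)^2}{4ab}\langle Px,x\rangle^2$ then yields $\|Px\|\|x\|\le c(\kappa)\langle Px,x\rangle$, since $c$ is increasing on $[1,\infty)$. Applying this with $P=|A_i|$, whose spectrum lies in $[\|A_i^{-1}\|^{-1},\|A_i\|]$, and summing over $i$ gives the right-modulus inequality.

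For the other two moduli I would argue as follows.

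\textbf{Quadratic symmetric modulus.} Use the lifting $\Gamma$ of Theorem~\ref{thm:main-equality}: $\Gamma(T)^*\Gamma(T)=\qsym{T}^2$, so $\qsym{T}=|\Gamma(T)|$, and $\Gamma$ is linear. Polar-decompose $\Gamma(S)=V|\Gamma(S)|$ and each $\Gamma(A_i)=W_i\qsym{A_i}$. The same computation gives $\qsym{S}=\sum_i\Ree(V^*W_i\qsym{A_i})$, and $V^*W_i$ is a contraction on $\Hh$. It remains to check that the condition number of $\qsym{A_i}$ is at most $\kappa_i$. This holds because $|A_i|^2$ and $|A_i^*|^2$ both have spectrum in $[a_i^2,b_i^2]$, hence so does their average, and so $\sigma(\qsym{A_i})\subset[a_i,b_i]$.

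\textbf{Arithmetic symmetric modulus.} Apply the right-modulus case to $S$ and to $S^*=\sum A_i^*$, using $\kappa(A_i^*)=\kappa(A_i)$, and average the two inequalities.

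For sharpness I would add a large scalar term. Fix $\kappa$ and $c'<c(\kappa)$. In $\mathbb M_2$ take $P=\diag(1,\kappa)$ and a unit vector $x$ attaining equality in Kantorovich. Choose a unitary $W$ with $W^*x=Px/\|Px\|$, set $A_1=WP$, and let the other term be $\lambda I$, which has condition number $1$ and coefficient $1$. As $\lambda\to\infty$,
\[
|\lambda I+A_1|=\lambda I+\Ree A_1+O(1/\lambda),
\]
by expanding $(\lambda^2+2\lambda\Ree A_1+A_1^*A_1)^{1/2}$. The same expansion holds for $|\cdot|_{\mathrm{sym}}$ and $\qsym{\cdot}$, because $|\lambda+A_1^*|$ has the same first-order term. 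Hence the inequality with coefficient $c'$ would force $\langle(\Ree A_1)x,x\rangle\le c'\langle Mx,x\rangle$, where $M$ is the relevant modulus of $A_1$. For the right modulus, $M=P$ and $\langle(\Ree A_1)x,x\rangle=c(\kappa)\langle Px,x\rangle$, a contradiction. To handle the condition that the other coefficients are kept unchanged, I would simply make one term the test term and the remaining terms $\lambda I$, possibly splitting $\lambda I$ among them.

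The main obstacle is the sharpness claim for the sym and qsym versions. There the comparison is with $\qsym{A_1}$ or $|A_1|_{\mathrm{sym}}$ rather than $|A_1|$. Normal examples are useless, because for normal $A$ one has $\Ree A\le|A|$. One therefore needs a non-normal $A_1$ of condition number $\kappa$ and a vector $x$ with $\Ree\langle A_1x,x\rangle$ close to $c(\kappa)\langle\qsym{A_1}x,x\rangle$.

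What I would try first is to optimize over $2\times2$ matrices $A_1=WP$, choosing $W$ so that $|A_1^*|=WPW^*$ is also "aligned" with $x$, that is, $\langle WPW^*x,x\rangle\approx\langle Px,x\rangle$. Since $\langle WPW^*x,x\rangle=\langle PW^*x,W^*x\rangle$, this amounts to a symmetric choice of $W$ that swaps $x$ and $Px/\|Px\|$. If exact equality fails in dimension $2$, I would pass to direct sums or to a limit over a family of examples to approach $c(\kappa)$.
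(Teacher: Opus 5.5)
Your proof of the inequality is correct and is essentially the paper's. It uses $|S|=\Ree(V^*S)=\sum_i\Ree(V^*U_i|A_i|)$ and the bound $\Ree(KX)\le c\,X$ for a contraction $K$; the paper's Lemma~\ref{lem:cond-Ree} is exactly your estimate $\Ree\langle KXz,z\rangle\le\|Xz\|\|z\|\le c\langle Xz,z\rangle$, written via $X^{-1/2}KX^{1/2}$. You also use the lifting $\Gamma$ for $\qsym{\cdot}$ and averaging for $\sym{\cdot}$, as the paper does. Your sharpness argument for the right modulus is also sound. The coordinate swap works as $W$, so $A_1=WP$ is essentially the matrix of Lemma~\ref{lem:one-sided-sharp}, and your $\lambda\to\infty$ expansion replaces the paper's exact choice $B_\mu=\mu I-(A-\Ree A)$.

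The gap is sharpness for $\sym{\cdot}$ and $\qsym{\cdot}$ when $\kappa>1$. You leave it open, and the route you sketch cannot succeed. With a scalar auxiliary term your test needs $\langle\Ree A_1x,x\rangle\approx c(\kappa)\langle M(A_1)x,x\rangle$, and this is impossible in every dimension:
\begin{itemize}
\item Normalize so that $|A|,|A^*|\ge I$ and $\|A\|\le\kappa$, and let $s=\Ree\langle Ax,x\rangle$ with $\|x\|=1$. Then $s\le\|Ax\|\le c(\kappa)\langle|A|x,x\rangle$ and $s\le\|A^*x\|\le c(\kappa)\langle|A^*|x,x\rangle$.
\item A ratio near $c(\kappa)$ against $\sym{A}$ therefore forces $\|Ax-sx\|^2=\|Ax\|^2-s^2\to0$ and $\|A^*x-sx\|\to0$.
\item Hence $\|A^*Ax-s^2x\|\to0$, so $\langle|A|x,x\rangle-s\to0$, and likewise for $|A^*|$. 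The ratio therefore tends to $1$.
\end{itemize}
Consequently $\Ree A\le\rho(\kappa)\sym{A}\le\rho(\kappa)\qsym{A}$ for some $\rho(\kappa)<c(\kappa)$. No search over non-normal $2\times2$ matrices, direct sums, or limits can help.

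The missing idea is the paper's self-adjoint dilation $\widehat A=\begin{pmatrix}0&A\\A^*&0\end{pmatrix}$. It satisfies $\kappa(\widehat A)=\kappa(A)$ and $\sym{\widehat A}=\qsym{\widehat A}=|\widehat A|=\diag(|A^*|,|A|)$, and the same holds for $\widehat A+\widehat B=\widehat{A+B}$. In your setup, take the test term to be $\widehat{A_1}$. Take the auxiliary term to be $\lambda\begin{pmatrix}0&I\\I&0\end{pmatrix}$, the dilation of $\lambda I$, which still has every modulus equal to $\lambda I$ and condition number $1$. Compressing the putative inequality to the lower-right block gives $|A_1+\lambda I|\le c'|A_1|+\lambda I$, which you have already refuted. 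Your remark that normal test terms are useless is true only against a scalar auxiliary term.
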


\subsection{Clarkson--McCarthy type inequalities for the quadratic symmetric modulus} Let \(\|\cdot\|_p\) denote the Schatten \(p\)-norm for \(1 \le p \le \infty\), and the quasi-Schatten \(p\)-norm for \(0 < p < 1\).
The classical Clarkson--McCarthy inequalities \cite{McC67} state that for $A,B\in \Mn$ and $p\ge 2$,
\begin{equation}\label{eq:Mccarthy}
	\|A+B\|_p^p+\|A-B\|_p^p\le 2^{p-1}\bigl(\|A\|_p^p+\|B\|_p^p\bigr).
\end{equation}
For $0<p\le 2$, the inequality is reversed. In particular, \eqref{eq:Mccarthy} shows that the space $\Mn$ is $p$-uniformly convex for $p\ge 2$. Bourin and Lee \cite{BL20} established the following  unitary-orbit refinement of \eqref{eq:Mccarthy}: for  $p\ge 2$, there exist unitaries $U,V$ such that
\begin{equation}\label{eq:Clarkson-unitary-orbit}
	U|A+B|^pU^*+V|A-B|^pV^*\le 2^{p-1}\bigl(|A|^p+|B|^p\bigr).
\end{equation}
For $0<p\le 2$, the inequality is reversed. The Clarkson--McCarthy inequalities have since been extended in various directions \cite{BH88}, including to multivariable inequalities involving several operators \cite{BK04,Zha25}; for a unified framework, see the author's recent paper \cite{Zha24}.

We now turn from triangle inequalities to Clarkson--McCarthy type estimates for quadratic symmetric modulus.
The starting point is an exact square-sum identity, which plays the role of a Parseval formula for the quadratic symmetric modulus.
\begin{thm}\label{thm:identity}
	For a fixed $m\times n$ matrix $U=(u_{ij})$,
	the operator square-sum identity
	\begin{equation}\label{eq:identity}
		\sum_{i=1}^m
		\left|
		\sum_{j=1}^n u_{ij}A_j
		\right|_{\mathrm{qsym}}^{2}
		=
		\sum_{j=1}^n |A_j|_{\mathrm{qsym}}^{2}
	\end{equation}
	holds for all $A_1,\ldots,A_n\in \mathbb{M}_N$
	if and only if $U$ is an isometry, that is,
	$
	U^*U=I_n.
	$
\end{thm}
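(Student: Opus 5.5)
Since $\qsym{T}^2 = \tfrac12\bigl(T^*T+TT^*\bigr) = \Gamma(T)^*\Gamma(T)$, the identity \eqref{eq:identity} is a statement about Gram operators. Set $X_i:=\sum_j u_{ij}A_j$. Expanding,
\[
\sum_{i=1}^m \qsym{X_i}^2=\frac12\sum_{i=1}^m\sum_{j,k}\Bigl(\overline{u_{ij}}u_{ik}\,A_j^*A_k+u_{ij}\overline{u_{ik}}\,A_jA_k^*\Bigr)
=\frac12\sum_{j,k}\Bigl((U^*U)_{jk}\,A_j^*A_k+\overline{(U^*U)_{jk}}\,A_jA_k^*\Bigr).
\]
This uses $\sum_i\overline{u_{ij}}u_{ik}=(U^*U)_{jk}$. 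The key structural point is that the conjugated coefficient in the second term is again an entry of $U^*U$, up to conjugation. This happens precisely because the quadratic symmetric modulus involves both $T^*T$ and $TT^*$ with the same weight.

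\emph{Sufficiency.} If $U^*U=I_n$, the double sum collapses to $\frac12\sum_j(A_j^*A_j+A_jA_j^*)=\sum_j\qsym{A_j}^2$, which is the identity.

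\emph{Necessity.} Assume the identity holds for all tuples. Put $G=U^*U$; it is Hermitian, so $\overline{G_{jk}}=G_{kj}$. The identity then reads
\[
\sum_{j,k}\Bigl(G_{jk}A_j^*A_k+G_{kj}A_jA_k^*\Bigr)=\sum_j\bigl(A_j^*A_j+A_jA_j^*\bigr)
\]
for all $A_1,\dots,A_n\in\mathbb M_N$. We extract the entries of $G$ with well-chosen test tuples.

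For the diagonal entries, take $A_j=I_N$ and all other $A_k=0$. This gives $2G_{jj}I=2I$, so $G_{jj}=1$.

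For the off-diagonal entries with $j\ne k$, take $A_j=I$, $A_k=\lambda I$ with $\lambda\in\mathbb C$, and the rest zero. Subtracting the diagonal contributions leaves
\[
\bigl(G_{jk}\lambda+G_{kj}\bar\lambda+G_{kj}\bar\lambda+G_{jk}\lambda\bigr)I=2\bigl(G_{jk}\lambda+\overline{G_{jk}\lambda}\bigr)I=4\,\Ree(G_{jk}\lambda)\,I,
\]
which must vanish. Taking $\lambda=1$ and $\lambda=i$ gives $\Ree G_{jk}=\Ima G_{jk}=0$. Hence $G=I_n$, i.e.\ $U$ is an isometry.

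The only delicate step is making sure the off-diagonal cross terms do not cancel in a way that hides $G_{jk}$. Scalar test matrices avoid any noncommutativity issue, and the Hermitian property of $G$ is what makes the two halves combine into $\Ree(G_{jk}\lambda)$ rather than cancel. If $N=1$ this works verbatim; for general $N$ the scalar multiples of $I_N$ suffice.
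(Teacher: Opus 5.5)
Your proposal is correct and is essentially the paper's argument. Sufficiency uses the same Gram-type expansion. For necessity you also specialize to scalar multiples of one fixed projection (you take $I_N$, the paper takes a rank-one $P$), reducing to a scalar statement. The only difference is that you carry out the polarization explicitly by extracting the entries of $G=U^*U$, whereas the paper stops at $\sum_i\bigl|\sum_j u_{ij}a_j\bigr|^2=\sum_j|a_j|^2$ for all $a\in\mathbb{C}^n$ and asserts that this is equivalent to $U^*U=I_n$.
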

With the square-sum identity in hand, we may combine it with Jensen-type orbit inequalities to derive the desired Clarkson--McCarthy type estimates.
\begin{thm}\label{thm:unitary-orbit}
	Let $B_i, A_j \in \mathbb M_N$ for $1\le i\le m, 1\le j\le n$ satisfy $(B_1,\ldots,B_m)^T=U(A_1,\ldots,A_n)^T$ for some isometry $U=(u_{ij})$. Then for $p\ge 2$, there exist unitaries $U_i,V_j$ such that
\begin{align*}
			\sum_{i=1}^mU_i\left|B_i\right|_{\mathrm{qsym}}^pU_i^*&\le n^{\frac{p}{2}-1}\sum_{j=1}^n\left|A_j\right|_{\mathrm{qsym}}^p,\\
	\sum_{j=1}^nV_j\left|A_j\right|_{\mathrm{qsym}}^pV_j^*&\le m^{\frac{p}{2}-1}\sum_{i=1}^m\left|B_i\right|_{\mathrm{qsym}}^p.
\end{align*}
	For $0 < p \le 2$, the inequalities are reversed.
\end{thm}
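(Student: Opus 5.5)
Write $P_j:=\qsym{A_j}^2$ and $Q_i:=\qsym{B_i}^2$, both positive semidefinite. By Theorem~\ref{thm:identity}, since $U$ is an isometry,
\[
\sum_{i=1}^m Q_i=\sum_{j=1}^n P_j .
\]
The plan is to reduce both inequalities to Jensen-type unitary orbit inequalities for the function $g(t)=t^{p/2}$ applied to this common sum. The function $g$ is convex for $p\ge2$ and concave for $0<p\le2$, and in both cases $g(0)=0$ and $g$ is monotone. We have $\qsym{B_i}^p=g(Q_i)$ and $\qsym{A_j}^p=g(P_j)$.

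For the first inequality with $p\ge2$, I would use two known unitary orbit results (Bourin--Lee type), which I will invoke rather than reprove.
\begin{enumerate}[label=(\alph*)]
\item Superadditivity of convex $g$ with $g(0)\le0$ on unitary orbits: there exist unitaries $U_i$ with
\[
\sum_i U_i\,g(Q_i)\,U_i^*\le g\Bigl(\sum_i Q_i\Bigr).
\]
This is the dual of Theorem~\ref{thm:Bourin--Uchiyama}, valid for $m$ summands by induction.
\item The Jensen orbit inequality for convex $g$: there exist unitaries $W,W'$ (the paper's ``Jensen-type orbit inequalities'', e.g.\ Bourin--Lee's $g\bigl(\sum_j \tfrac1n X_j\bigr)\le \tfrac12\bigl(W\,\tfrac1n\sum_j g(X_j)\,W^*+W'\,\tfrac1n\sum_j g(X_j)\,W'^*\bigr)$) giving an estimate of $g\bigl(\tfrac1n\sum_j P_j\bigr)$ by averages of unitary conjugates of $\tfrac1n\sum_j g(P_j)$.
\end{enumerate}
Since $g\bigl(\sum_j P_j\bigr)=n^{p/2}\,g\bigl(\tfrac1n\sum_j P_j\bigr)$, combining (a) and (b) gives $n^{p/2-1}\sum_j g(P_j)$ up to unitary orbits.

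The obstacle is that the Jensen step yields a convex combination of two conjugates, not a single one. I would handle it via the block-matrix form of Jensen: with a column isometry $Z=n^{-1/2}(I,\dots,I)^T$ and $X=\diag(P_1,\dots,P_n)$, Hansen--Pedersen-type orbit results give a single unitary $W$ such that
\[
g(Z^*XZ)\le W\,Z^*g(X)Z\,W^*,
\]
for monotone convex $g$ with $g(0)\le0$. This is precisely the compression form of Bourin's unitary orbit Jensen inequality. Then
\[
g\Bigl(\tfrac1n\sum_j P_j\Bigr)\le W\Bigl(\tfrac1n\sum_j g(P_j)\Bigr)W^*.
\]
Conjugating the superadditivity step by $W^*$ and absorbing $W^*$ into each $U_i$, the right-hand side becomes exactly $n^{p/2-1}\sum_j\qsym{A_j}^p$, with no leftover unitary. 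Indeed $W^*\bigl(W\,Y\,W^*\bigr)W=Y$, so the new unitaries $W^*U_i$ work.

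The second inequality follows by the same argument with the roles exchanged: $U^*$ need not be an isometry, but the argument only uses the equality of the sums $\sum_i Q_i=\sum_j P_j$, which is symmetric in the two families. For $0<p\le2$, $g$ is monotone concave with $g(0)\ge0$, so Theorem~\ref{thm:Bourin--Uchiyama} gives subadditivity on orbits, and the concave Jensen orbit inequality reverses (b). The same assembly then gives the reversed inequalities. I expect the main care to lie in the single-unitary compression Jensen step, which I would cite from Bourin's work on unitary orbits, and in correctly placing the unitaries so that they land on the left-hand sums as stated.
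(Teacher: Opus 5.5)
Your proposal is correct and follows the paper's proof. The paper uses the same steps: the square-sum identity of Theorem~\ref{thm:identity}, orbit superadditivity (or subadditivity, for $0<p\le 2$) for $t\mapsto t^{p/2}$, the single-unitary Jensen orbit inequality applied to $\frac1n\sum_j\qsym{A_j}^2$ (the second inequality in Lemma~\ref{lem:unitary_orbit_conv_conc}), absorption of $W^*$ into the $U_i$, and the same argument with roles swapped for the second inequality. One caveat: step (a), as you state it, needs $g$ to be monotone in general. Negating the paper's $t-t^2$ example from Section~\ref{sec:counterexample-bhl-sym} gives a convex $g$ with $g(0)=0$ for which it fails. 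Since $t^{p/2}$ is monotone, your argument is unaffected.
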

By choosing the normalized Hadamard isometry
\[
\frac{1}{\sqrt{2}}
\begin{pmatrix}
	1&1\\
	1&-1
\end{pmatrix}
\]
in Theorem~\ref{thm:unitary-orbit} yields the following two-variable Clarkson--McCarthy type inequality.
	\begin{cor}
	Let $A,B\in \mathbb M_n$. Then for $p\ge 2$, there exist unitaries $U,V$ such that
	\[
	U\qsym{A+B}^pU^*+V\qsym{A-B}^pV^*\le 2^{p-1}(\qsym{A}^p+\qsym{B}^p).
	\]
	For $0<p\le 2$, the inequality is reversed.
\end{cor}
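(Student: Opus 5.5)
The statement to prove is the last corollary: for $p\ge 2$ there are unitaries $U,V$ with $U\qsym{A+B}^pU^*+V\qsym{A-B}^pV^*\le 2^{p-1}(\qsym{A}^p+\qsym{B}^p)$, and the reverse inequality for $0<p\le 2$. The plan is to specialize Theorem~\ref{thm:unitary-orbit} to $m=n=2$ with the normalized Hadamard isometry
\[
H=\frac{1}{\sqrt2}\begin{pmatrix}1&1\\ 1&-1\end{pmatrix}.
\]
This $H$ is unitary, so in particular $H^*H=I_2$. Put $A_1=A$, $A_2=B$. Then
\[
B_1=\frac{A+B}{\sqrt2},\qquad B_2=\frac{A-B}{\sqrt2}.
\]

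For $p\ge 2$, the first inequality of Theorem~\ref{thm:unitary-orbit} gives unitaries $U_1,U_2$ with
\[
U_1\qsym{B_1}^pU_1^*+U_2\qsym{B_2}^pU_2^*\le 2^{\frac p2-1}\bigl(\qsym{A}^p+\qsym{B}^p\bigr).
\]
Next I use homogeneity of the quadratic symmetric modulus. For real $t>0$,
\[
\Ree(tZ)=t\Ree Z,\qquad \Ima(tZ)=t\Ima Z,
\]
so $\qsym{tZ}=t\qsym{Z}$. Hence $\qsym{B_1}^p=2^{-p/2}\qsym{A+B}^p$, and likewise $\qsym{B_2}^p=2^{-p/2}\qsym{A-B}^p$. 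Multiplying both sides by $2^{p/2}$ gives the constant $2^{\frac p2-1}\cdot 2^{\frac p2}=2^{p-1}$, with $U=U_1$ and $V=U_2$.

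For $0<p\le 2$, the reversed first inequality of Theorem~\ref{thm:unitary-orbit} is handled by the identical rescaling and gives
\[
U\qsym{A+B}^pU^*+V\qsym{A-B}^pV^*\ge 2^{p-1}\bigl(\qsym{A}^p+\qsym{B}^p\bigr).
\]

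The only points needing care are the homogeneity check and tracking the powers of $2$. Since $H$ is square, there is no subtlety about which direction of Theorem~\ref{thm:unitary-orbit} to use: the first inequality already places the unitaries on the $A\pm B$ side, as the corollary requires.
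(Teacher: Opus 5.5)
Your proposal is correct and takes the same route as the paper: specialize Theorem~\ref{thm:unitary-orbit} to $m=n=2$ with the normalized Hadamard isometry, using the first inequality (reversed for $0<p\le 2$) so that the unitaries fall on the $A\pm B$ side. Your homogeneity step $\qsym{tZ}=t\qsym{Z}$, which turns $2^{\frac p2-1}$ into $2^{p-1}$, is exactly the bookkeeping the paper leaves implicit.
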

We next combine Theorem~\ref{thm:identity} with certain Jensen-type inequalities for unitarily invariant norms
to derive the following Clarkson--McCarthy type estimates.
\begin{thm}\label{thm:uin}
		Let $B_i, A_j \in \mathbb M_N$ for $1\le i\le m, 1\le j\le n$ satisfy $(B_1,\ldots,B_m)^T=U(A_1,\ldots,A_n)^T$ for some isometry $U=(u_{ij})$. Then for $p\ge 2$,
	\begin{align*}
m^{1-\frac{p}{2}}\unorm{ \sum_{j=1}^n\left|A_j\right|_{\mathrm{qsym}}^p} \le	\unorm{	\sum_{i=1}^m\left|B_i\right|_{\mathrm{qsym}}^p} &\le n^{\frac{p}{2}-1}\unorm{\sum_{j=1}^n\left|A_j\right|_{\mathrm{qsym}}^p}.
	\end{align*}
	For $0 < p \le 2$, the inequalities are reversed.
\end{thm}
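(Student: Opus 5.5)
Set $S_i:=|B_i|_{\mathrm{qsym}}^2$ and $T_j:=|A_j|_{\mathrm{qsym}}^2$. These are positive semidefinite matrices in $\mathbb M_N$. By Theorem~\ref{thm:identity}, the hypothesis $(B_1,\ldots,B_m)^T=U(A_1,\ldots,A_n)^T$ with $U$ an isometry gives the exact identity
\[
\sum_{i=1}^m S_i=\sum_{j=1}^n T_j=:H .
\]
Put $q:=p/2$. We must then compare $\unorm{\sum_i S_i^{q}}$ and $\unorm{\sum_j T_j^{q}}$. Both sums of $q$-th powers are controlled by the common sum $H$ through Jensen/Rotfel'd-type norm inequalities for the convex or concave function $t\mapsto t^q$.

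\emph{Step 1: two-sided bounds for a single family.} The key claim is as follows. For positive $X_1,\dots,X_k\in\mathbb M_N$ and $q\ge1$,
\[
k^{1-q}\,\unorm{\Bigl(\sum_{i=1}^k X_i\Bigr)^{q}}\le \unorm{\sum_{i=1}^k X_i^{q}}\le \unorm{\Bigl(\sum_{i=1}^k X_i\Bigr)^{q}},
\]
and both inequalities reverse for $0<q\le1$.

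The right-hand inequality is the known superadditivity result for convex $f$ with $f(0)=0$ (Ando--Zhan, Bourin--Uchiyama in the concave case), iterated over the $k$ summands. I intend to quote it rather than reprove it.

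For the left-hand inequality, I use the operator-convexity-free Jensen orbit inequality. For convex $f$ there is a unitary $W$ with
\[
f\Bigl(\tfrac1k\sum X_i\Bigr)\le W\Bigl(\tfrac1k\sum f(X_i)\Bigr)W^* .
\]
This is Bourin's unitary-orbit Jensen inequality for the average of the $X_i$. Taking unitarily invariant norms then gives
\[
k^{-q}\unorm{H^q}\le k^{-1}\unorm{\sum X_i^q}.
\]
For $q\le1$ the direction is reversed because $f$ is concave. The whole step thus reduces to standard facts: majorization, and $\unorm{\cdot}$ being monotone on positive matrices.

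\emph{Step 2: chain the bounds.} Apply Step 1 to $\{S_i\}$ with $k=m$ and to $\{T_j\}$ with $k=n$, both having sum $H$. For $q\ge1$ this gives
\[
\unorm{\sum_i S_i^q}\le\unorm{H^q}\le n^{q-1}\unorm{\sum_j T_j^q},
\]
which is the right inequality of the theorem since $q-1=\tfrac p2-1$. It also gives
\[
\unorm{\sum_i S_i^q}\ge m^{1-q}\unorm{H^q}\ge m^{1-q}\unorm{\sum_j T_j^q},
\]
which is the left inequality. For $0<q\le1$, that is $0<p\le2$, every inequality in Step 1 reverses, so both chains reverse, as claimed.

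The main obstacle is making sure Step 1 is available exactly in this form for all unitarily invariant norms. If the norm version of the superadditivity inequality is only recorded for two summands, I would iterate it by induction, grouping $X_1+\dots+X_{k-1}$ as a single positive matrix. I would also double-check that the $k^{1-q}$ constant in the Jensen step comes out exactly, which it does because $(1/k)^q$ scales out of $H^q$.
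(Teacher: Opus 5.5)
Your proof is correct and is essentially the paper's argument: Theorem~\ref{thm:identity} gives a common sum $H$, and the two Jensen-type norm inequalities for $t\mapsto t^{p/2}$ (Lemma~\ref{lem:Jensen_type_inequalities_uin}, which the paper quotes in $m$-summand form) are chained exactly as you do, with your unitary-orbit Jensen step being just a stronger form of the averaging inequality there. One caution: your fallback of obtaining the $k$-summand superadditivity from the two-summand case by grouping does not work, because $\unorm{Y}\le\unorm{Z}$ for all unitarily invariant norms is not preserved when a common positive summand is added. For example, $Y=C=\diag(1,0)$ and $Z=\diag(0,1)$ give $\|Y+C\|_\infty=2>1=\|Z+C\|_\infty$, so you should cite the $k$-summand version directly, as the paper does.
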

Again, choosing the \(2\times2\) normalized  Hadamard isometry gives the familiar two-variable form.
	\begin{cor}
	Let $A,B\in \Mn$. Then for $p\ge 2$, 
	\[
\unorm{\qsym{A+B}^p+\qsym{A-B}^p}  \le 2^{p-1}\unorm{ \qsym{A}^p+\qsym{B}^p}.
	\]
	For $0<p\le 2$, the inequality is reversed.
\end{cor}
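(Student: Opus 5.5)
The statement to be proved is the last corollary: for a unitarily invariant norm and $p\ge2$,
\[
\unorm{\qsym{A+B}^p+\qsym{A-B}^p}\le 2^{p-1}\unorm{\qsym{A}^p+\qsym{B}^p},
\]
with the inequality reversed for $0<p\le2$. The plan is to specialise Theorem~\ref{thm:uin} to $m=n=2$ and then remove the normalisation.

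\textbf{Specialisation.} Take $A_1=A$, $A_2=B$ and the normalised Hadamard matrix
\[
U=\frac{1}{\sqrt2}\begin{pmatrix}1&1\\1&-1\end{pmatrix}.
\]
This $U$ is unitary, so in particular $U^*U=I_2$ and it is an isometry. The transformed operators are
\[
B_1=\frac{A+B}{\sqrt2},\qquad B_2=\frac{A-B}{\sqrt2}.
\]
The right-hand inequality of Theorem~\ref{thm:uin} with $n=2$ then gives, for $p\ge2$,
\[
\unorm{\qsym{B_1}^p+\qsym{B_2}^p}\le 2^{\frac p2-1}\unorm{\qsym{A}^p+\qsym{B}^p}.
\]

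\textbf{Homogeneity.} Next I check that $\qsym{cT}=|c|\,\qsym{T}$ for every scalar $c$. Indeed $|cT|^2=|c|^2|T|^2$ and $|(cT)^*|^2=|c|^2|T^*|^2$. The square root of $|c|^2X$ for $X\ge0$ is $|c|X^{1/2}$. Applying this with $c=1/\sqrt2$ gives
\[
\qsym{B_1}^p=2^{-p/2}\qsym{A+B}^p,\qquad \qsym{B_2}^p=2^{-p/2}\qsym{A-B}^p.
\]
Substituting into the specialised inequality and multiplying both sides by $2^{p/2}$, the constant becomes $2^{p/2}\cdot2^{p/2-1}=2^{p-1}$. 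This is exactly the claimed bound.

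\textbf{Case $0<p\le2$.} Theorem~\ref{thm:uin} asserts that both inequalities reverse in this range, so the right-hand one now reads $\unorm{\sum\qsym{B_i}^p}\ge n^{\frac p2-1}\unorm{\sum\qsym{A_j}^p}$. The same rescaling then gives the reversed corollary.

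The argument is a direct specialisation with no real obstacle. The only points needing care are confirming the isometry condition for $U$ and tracking the factor $2^{p/2}$ through the scaling.
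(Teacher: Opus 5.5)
Your proposal is correct and follows the paper's own route. The paper obtains this corollary by applying Theorem~\ref{thm:uin} with $m=n=2$ and the normalized Hadamard isometry. Your homogeneity step $\qsym{(A\pm B)/\sqrt2}^p=2^{-p/2}\qsym{A\pm B}^p$ correctly turns the constant $2^{\frac p2-1}$ into $2^{p-1}$, both for $p\ge2$ and in the reversed range $0<p\le2$.
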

Taking the trace in Theorem~\ref{thm:uin} or the trace norm in Theorem~\ref{thm:uin} recovers the following Schatten \(p\)-norm version.
\begin{cor}
	Let $B_i, A_j \in  \mathbb M_N$ for $1\le i\le m, 1\le j\le n$ satisfy $(B_1,\ldots,B_m)^T=U(A_1,\ldots,A_n)^T$ for some isometry $U=(u_{ij})$. Then for $p\ge 2$, 
	\[
	m^{1 - \frac{p}{2}}\sum_{j=1}^n \||A_j|_{\mathrm{qsym}}\|_p^p \le \sum_{i=1}^m \||B_i|_{\mathrm{qsym}}\|_p^p \le n^{\frac{p}{2} - 1} \sum_{j=1}^n \||A_j|_{\mathrm{qsym}}\|_p^p.
	\]
	For $0 < p \le 2$, the inequalities are reversed.
\end{cor}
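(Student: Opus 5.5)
The plan is to reduce the corollary to Theorem~\ref{thm:uin}, applied to the trace norm or directly to the trace. Set $X_j:=\qsym{A_j}^p\ge0$ and $Y_i:=\qsym{B_i}^p\ge0$.

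For positive semidefinite matrices, the trace norm of a sum equals the trace of the sum. Hence
\[
\Bigl\|\sum_{j=1}^n X_j\Bigr\|_1=\sum_{j=1}^n\tr X_j=\sum_{j=1}^n\bigl\|\qsym{A_j}\bigr\|_p^p,
\]
since $\tr\qsym{A_j}^p=\|\qsym{A_j}\|_p^p$ by the definition of the Schatten $p$-(quasi)norm of a positive matrix. The same computation gives $\bigl\|\sum_{i=1}^m Y_i\bigr\|_1=\sum_{i=1}^m\|\qsym{B_i}\|_p^p$.

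Next, apply Theorem~\ref{thm:uin} with $\unorm{\cdot}=\|\cdot\|_1$, which is a unitarily invariant norm on $\mathbb M_N$. For $p\ge2$ this yields
\[
m^{1-\frac p2}\Bigl\|\sum_j X_j\Bigr\|_1\le\Bigl\|\sum_i Y_i\Bigr\|_1\le n^{\frac p2-1}\Bigl\|\sum_j X_j\Bigr\|_1.
\]
Substituting the trace identities from the first step gives exactly the claimed chain. For $0<p\le2$, the reversed inequalities of Theorem~\ref{thm:uin} transfer verbatim in the same way.

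No step is a real obstacle. The only point to check is that the $\|\cdot\|_p^p$ appearing here is, for $0<p<1$, just $\tr(\cdot)^p$ of a positive matrix. That holds by definition, so no norm properties of the quasi-norm are needed: we only use the genuine norm $\|\cdot\|_1$ applied to sums of positive matrices.

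Alternatively, one could take traces directly in the unitary-orbit inequalities of Theorem~\ref{thm:unitary-orbit}. Conjugation by $U_i$ and $V_j$ does not change traces, so this gives the same conclusion.
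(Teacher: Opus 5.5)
Your proposal is correct and is essentially the paper's argument: the paper obtains this corollary by specializing Theorem~\ref{thm:uin} to the trace norm, using $\bigl\|\sum_j \qsym{A_j}^p\bigr\|_1=\sum_j \|\qsym{A_j}\|_p^p$ for positive summands, exactly as you do. Your alternative of taking traces in Theorem~\ref{thm:unitary-orbit} is equally valid.
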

	\begin{cor}
	Let $A,B\in \Mn$. Then for $p\ge 2$, 
	\[
\| \qsym{A+B}\|_p^p+\|\qsym{A-B}\|_p^p\le 2^{p-1}(\|\qsym{A}\|_p^p+\|\qsym{B}\|_p^p) .
	\]
	For $0<p\le 2$, the inequality is reversed.
\end{cor}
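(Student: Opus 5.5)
The last statement is the two-variable Schatten corollary: for $p\ge2$,
\[
\|\qsym{A+B}\|_p^p+\|\qsym{A-B}\|_p^p\le 2^{p-1}\bigl(\|\qsym{A}\|_p^p+\|\qsym{B}\|_p^p\bigr),
\]
with the reverse inequality for $0<p\le2$. The plan is to specialize the preceding Schatten corollary (equivalently, Theorem~\ref{thm:uin} with the trace norm) to $m=n=2$.

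Take $A_1=A$, $A_2=B$ and the normalized Hadamard isometry
\[
U=\frac1{\sqrt2}\begin{pmatrix}1&1\\1&-1\end{pmatrix}.
\]
Then $B_1=(A+B)/\sqrt2$ and $B_2=(A-B)/\sqrt2$.

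The quadratic symmetric modulus is positively homogeneous: since $\Re(cZ)=c\Re Z$ and $\Im(cZ)=c\Im Z$ for real $c\ge0$, we have $\qsym{cZ}=c\,\qsym{Z}$. Hence
\[
\|\qsym{B_i}\|_p^p=2^{-p/2}\|\qsym{A\pm B}\|_p^p .
\]
The upper bound in the previous corollary, with $n=2$, then reads
\[
2^{-p/2}\bigl(\|\qsym{A+B}\|_p^p+\|\qsym{A-B}\|_p^p\bigr)\le 2^{p/2-1}\bigl(\|\qsym{A}\|_p^p+\|\qsym{B}\|_p^p\bigr).
\]
Multiplying through by $2^{p/2}$ gives the factor $2^{p-1}$, as claimed. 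For $0<p\le2$ the same computation runs with the reversed inequality from that corollary; the positive factor $2^{p/2}$ preserves the direction.

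If one prefers to argue from the unitary-orbit form, the same scaling applied to the corollary of Theorem~\ref{thm:unitary-orbit} gives
\[
U\qsym{A+B}^pU^*+V\qsym{A-B}^pV^*\le 2^{p-1}\bigl(\qsym{A}^p+\qsym{B}^p\bigr).
\]
Taking traces then works, since conjugation by a unitary does not change the trace.

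There is no real obstacle here: the content lies in Theorem~\ref{thm:identity} (the Parseval identity $\qsym{B_1}^2+\qsym{B_2}^2=\qsym{A}^2+\qsym{B}^2$) combined with Jensen-type convexity of $t\mapsto t^{p/2}$. The only point needing care is the homogeneity of $\qsym{\cdot}$ under the scaling by $1/\sqrt2$, together with the exponent bookkeeping $2^{-p/2}\cdot 2^{p-1}\cdot 2^{p/2}$, which leaves exactly the factor $2^{p-1}$.
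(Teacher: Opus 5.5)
Your proposal is correct and follows the paper's route. The paper obtains this corollary by taking the trace (equivalently, the trace norm) in Theorem~\ref{thm:uin}, which gives the preceding Schatten corollary, and then specializing to the normalized $2\times2$ Hadamard isometry. Your homogeneity step and exponent bookkeeping ($2^{-p/2}$ on the left, $2^{p/2-1}$ on the right) correctly yield the constant $2^{p-1}$, in both the case $p\ge2$ and the reversed case $0<p\le2$.
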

\subsection{Questions of Bourin and Lee on symmetric moduli and Euler-type identities}
	For $A,B\in\Mn$ and $x\in\mathbb R$, we write
$A\nabla_x B := (1-x)A + xB$
for the weighted arithmetic mean of $A$ and $B$.
We begin by recalling a unitary-orbit refinement of the weighted parallelogram identity due to Bourin and Lee \cite{BL26}. 
\begin{thm}[{\cite[Corollary~2.2]{BL26}}]\label{thm:BL-C22}
	Let $A,B\in \mathbb M_n$ and $0\le x\le 1$. Then there exist isometries $U,V,S,T\in \mathbb M_{2n,n}$ such that
	\[
	|A\oplus B|^{2}
	=
	U\,|A\nabla_x B|^{2}U^{*}
	+
	V\,|B\nabla_x A|^{2}V^{*}
	+
	x(1-x)\Bigl\{\,S\,|A-B|^{2}S^{*}+T\,|A-B|^{2}T^{*}\Bigr\}.
	\]
\end{thm}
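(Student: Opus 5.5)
The plan is to exploit two elementary facts. First, \(X^*X\) and \(XX^*\) are related by a partial isometry: if \(K\in\mathbb M_{2n,n}\) has polar decomposition \(K=V_0|K|\), then \(KK^*=V_0(K^*K)V_0^*\). Since \(n\le 2n\), the partial isometry \(V_0\) can be enlarged to an isometry \(V\in\mathbb M_{2n,n}\) agreeing with it on \(\overline{\Ran}\,K^*\), so \(KK^*=V(K^*K)V^*\). Second, \(|M|^2\) is invariant under left multiplication by a unitary. With these, a decomposition of \(|A\oplus B|^2\) into a sum of two terms of the form \(KK^*\), followed by the scalar parallelogram-type identity, should give the statement directly.

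\emph{Step 1 (pass to the co-modulus).} I first write \(|A\oplus B|^2=Z\bigl(|(A\oplus B)^*|^2\bigr)Z^*\) for some unitary \(Z\in\mathbb M_{2n}\). This holds because \(X^*X\) and \(XX^*\) are unitarily equivalent for square \(X\). Here \(|(A\oplus B)^*|^2=AA^*\oplus BB^*=(A\oplus B)(A\oplus B)^*\).

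\emph{Step 2 (column splitting).} Let
\[
\Omega:=\begin{pmatrix}\sqrt{1-x}\,I&\sqrt{x}\,I\\ \sqrt{x}\,I&-\sqrt{1-x}\,I\end{pmatrix},
\]
which is unitary. Write \((A\oplus B)\Omega=(K_1\ K_2)\) with
\[
K_1=\begin{pmatrix}\sqrt{1-x}\,A\\ \sqrt{x}\,B\end{pmatrix},\qquad K_2=\begin{pmatrix}\sqrt{x}\,A\\ -\sqrt{1-x}\,B\end{pmatrix}\in\mathbb M_{2n,n}.
\]
Then \((A\oplus B)(A\oplus B)^*=K_1K_1^*+K_2K_2^*\). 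By the first fact there are isometries \(V_1,V_2\in\mathbb M_{2n,n}\) with \(K_iK_i^*=V_i(K_i^*K_i)V_i^*\). Here
\[
K_1^*K_1=(1-x)A^*A+xB^*B,\qquad K_2^*K_2=xA^*A+(1-x)B^*B.
\]

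\emph{Step 3 (scalar-type identity).} For the \(2n\times n\) column \(K_1\), multiplying on the left by the unitary \(\Omega\) gives
\[
\Omega K_1=\begin{pmatrix}(1-x)A+xB\\ \sqrt{x(1-x)}\,(A-B)\end{pmatrix}.
\]
Hence
\[
K_1^*K_1=(\Omega K_1)^*(\Omega K_1)=|A\nabla_xB|^2+x(1-x)|A-B|^2.
\]
The same computation for \(K_2\), with the roles of \(A\) and \(B\) exchanged and a sign absorbed into \(|A-B|^2=|B-A|^2\), gives
\[
K_2^*K_2=|B\nabla_xA|^2+x(1-x)|A-B|^2.
\]

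\emph{Step 4 (assemble).} Substituting Steps 2 and 3 into Step 1 and setting \(U=S=ZV_1\) and \(V=T=ZV_2\) yields the asserted identity. Each of these is an isometry in \(\mathbb M_{2n,n}\), being a unitary composed with an isometry.

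The only point needing care is the upgrade of the partial isometry in the polar decomposition of \(K_i\) to a genuine isometry \(\mathbb C^n\to\mathbb C^{2n}\) in Step 2. This works because the kernel of \(K_i\) has dimension at most \(n\) while the orthogonal complement of \(\Ran K_i\) has at least as large a dimension, so the partial isometry extends isometrically from the kernel of \(K_i\) into that complement. Everything else is a routine block computation.
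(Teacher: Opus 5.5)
Your proof is correct and complete. The paper does not prove this statement: it is quoted from \cite{BL26} and serves only as the starting point for Question~\ref{ques:BL-Q23} and Theorem~\ref{thm:Q23-all-x-outside}.

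\textbf{The natural route in the paper's toolkit.} It would mirror the proof of Theorem~\ref{thm:main3}. Form the block matrix $M:=\Omega(A\oplus B)\Omega$. Its diagonal blocks are $A\nabla_xB$ and $B\nabla_xA$, and both off-diagonal blocks equal $\sqrt{x(1-x)}\,(A-B)$. Since $\Omega$ is a self-adjoint unitary, $|M|^2=\Omega\,|A\oplus B|^2\,\Omega$. Lemma~\ref{lem:pythagoras-partitioned} with $m=2$ then gives the identity, after conjugating the resulting isometries by $\Omega$.

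\textbf{How your argument relates to it.} Your proof is a self-contained unpacking of exactly this mechanism. The block columns of $M$ are $\Omega K_1$ and $\Omega K_2$. Your chain is:
\begin{enumerate}[label=(\roman*)]
\item $|A\oplus B|^2$ is unitarily equivalent to $(A\oplus B)(A\oplus B)^*=K_1K_1^*+K_2K_2^*$;
\item $K_jK_j^*=V_j(K_j^*K_j)V_j^*$, where $K_j^*K_j$ is the sum of the squared moduli of the blocks in the $j$-th column of $M$.
\end{enumerate}
This is precisely the column-by-column proof of the Pythagoras lemma, specialized to $m=2$.

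\textbf{What each route buys.} Your version is independent of \cite{BLL24}. It also gives a slightly sharper conclusion: you may take $S=U$ and $T=V$, because the isometry depends only on the column. The lemma-based route is a one-line application once \cite{BLL24} is available, and the same template yields Theorem~\ref{thm:main3}.

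You also place the hypothesis $0\le x\le 1$ exactly where it is needed, namely in the reality of $\sqrt{x}$ and $\sqrt{1-x}$, i.e.\ the unitarity of $\Omega$. This is consistent with Theorem~\ref{thm:Q23-all-x-outside}.
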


Taking traces in Theorem~\ref{thm:BL-C22} yields 
\[
\|A\oplus B\|_2^2=\|A\nabla_x B\|_2^2+\|B\nabla_x A\|_2^2+2x(1-x)\|A-B\|_2^2
\] for $0\le x\le 1$.
Since the right-hand side of that trace identity is a polynomial in $x$, it extends to all
$x\in\mathbb R$. This motivates the following question.

\begin{ques}[{\cite[Question~2.3]{BL26}}]\label{ques:BL-Q23}
	Does Theorem~\ref{thm:BL-C22} hold for every $x\in\mathbb R$?
\end{ques}

We first answer Question~\ref{ques:BL-Q23} in the negative: the restriction $0\le x\le 1$ in
Theorem~\ref{thm:BL-C22} cannot be removed and therefore is sharp (see Theorem~\ref{thm:Q23-all-x-outside} below).

\begin{thm}\label{thm:Q23-all-x-outside}
	Fix $x\in\mathbb R\setminus[0,1]$. Then Theorem~\ref{thm:BL-C22} fails for this $x$
	and some $A,B\in\mathbb{M}_n$. In fact, a counterexample exists already for $n=1$.
\end{thm}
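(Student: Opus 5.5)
We take $n=1$ and choose $A=a$, $B=b$ so that one of the weighted means vanishes. The identity then forces a rank-two positive matrix to be dominated by a rank-one one, which is impossible.

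\emph{Setting up the scalar case.} For $n=1$, $A=a$ and $B=b$ are complex numbers, and $A\oplus B=\diag(a,b)\in\mathbb M_2$, so $|A\oplus B|^2=\diag(|a|^2,|b|^2)$. The isometries $U,V,S,T\in\mathbb M_{2,1}$ are unit column vectors $u,v,s,t\in\mathbb C^2$. The identity of Theorem~\ref{thm:BL-C22} at the fixed $x$ would therefore read
\[
\diag(|a|^2,|b|^2)=\alpha\,uu^*+\beta\,vv^*+x(1-x)|a-b|^2\,(ss^*+tt^*),
\]
where $\alpha=|(1-x)a+xb|^2$ and $\beta=|(1-x)b+xa|^2$. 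The key feature of $x\notin[0,1]$ is that $x(1-x)<0$, so the last term is negative semidefinite. Consequently any representation of this form would give
\[
\diag(|a|^2,|b|^2)\le \alpha\,uu^*+\beta\,vv^*.
\]

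\emph{Killing one mean.} We choose $a,b$ with $\alpha=0$ while both $a$ and $b$ are nonzero. For $x>1$, take $a=1$ and $b=1-\tfrac1x$. Then $(1-x)a+xb=(1-x)+x-1=0$, and $b\in(0,1)$ is nonzero with $b\neq a$. For $x<0$, we use the symmetric choice $b=1$ and $a=1-\frac{1}{1-x}$. This kills the second mean $(1-x)b+xa$ instead, so $\beta=0$; here too $a\in(0,1)$, $a\neq 0$ and $a\ne b$. The two cases are interchanged by the symmetry $x\leftrightarrow 1-x$ together with $a\leftrightarrow b$.

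\emph{The contradiction.} With this choice the inequality above becomes
\[
\diag(|a|^2,|b|^2)\le \beta\,vv^* \quad(\text{resp. } \le \alpha\,uu^*).
\]
If $0\le X\le Y$ then $\ker Y\subseteq\ker X$, so $\rank X\le \rank Y$. Here the left side is positive definite of rank $2$, while the right side has rank at most $1$. Concretely, evaluating at a unit vector $w\perp v$ gives $0<\min(|a|^2,|b|^2)\le w^*\diag(|a|^2,|b|^2)w\le 0$, which is absurd. Hence no unit vectors $u,v,s,t$ exist, and Theorem~\ref{thm:BL-C22} fails for this $x$ already with $n=1$.

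The only real obstacle was finding the right pair $(a,b)$. Natural choices such as $b=0$ or $|a|=|b|$ are compatible with all eigenvalue constraints and give no contradiction. The trick is to choose $b$ so that a weighted mean vanishes, which makes the right-hand side rank-deficient; the rest is a one-line positivity argument.
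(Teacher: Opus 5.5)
Your proof is correct and is essentially the paper's argument. The paper likewise takes $n=1$, $A=1$, $B=1-1/x$ so that $A\nabla_x B=0$, and then shows that a positive definite $2\times 2$ matrix would have to equal (up to adding a positive semidefinite term) a rank-one matrix.

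One harmless slip: for $x<0$ your choice $b=1$, $a=1-\frac{1}{1-x}$ actually gives $(1-x)a+xb=0$, i.e.\ $\alpha=0$ rather than $\beta=0$. This is consistent with the symmetry you cite, which preserves the first mean. The case split is in fact unnecessary, since $a=1$, $b=1-1/x\neq 0$ works verbatim for $x<0$ as well.
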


Bourin and Lee \cite[Corollary~4.3]{BL26} obtained the following unitary-orbit estimate for the quadratic symmetric modulus, which can be regarded as  a matrix analogue of the scalar inequality $|z| \le |a| + |b|$ for a complex number $z = a + ib$. 
\begin{thm}[Bourin--Lee]\label{thm:Bourin_Lee}
	Let $Z\in\Mn$. Then there exist unitary matrices $U,V\in\Mn$ such that
	\[
	\sqrt{\frac{|Z|^{2}+|Z^{*}|^{2}}{2}}
	\le
	U|\Ree\, Z|U^{*}+V|\Ima\, Z|V^{*}.
	\]
\end{thm}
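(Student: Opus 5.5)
The statement to prove is Theorem~\ref{thm:Bourin_Lee}. With $Z=X+iY$, where $X=\Ree Z$ and $Y=\Ima Z$ are Hermitian, we have $\qsym{Z}^2 = X^2+Y^2$. So the claim is that
\[
(X^2+Y^2)^{1/2}\le U|X|U^*+V|Y|V^*
\]
for some unitaries $U,V$. My plan is to write $(X^2+Y^2)^{1/2}$ as the modulus of a sum of two operators and then apply Thompson's inequality, Theorem~\ref{thm:thompson}, in a dilated space.

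The key observation is the following. Consider the column operator
\[
C=\begin{pmatrix} X\\ Y\end{pmatrix}\in\mathbb M_{2n,n}, \qquad C^*C = X^2+Y^2 .
\]
To get square matrices, embed $C$ into $\mathbb M_{2n}$ as $\tilde C=\begin{pmatrix} X&0\\ Y&0\end{pmatrix}$. Then
\[
|\tilde C| = (X^2+Y^2)^{1/2}\oplus 0 .
\]
Decompose $\tilde C=\tilde A+\tilde B$ with $\tilde A=\begin{pmatrix} X&0\\0&0\end{pmatrix}$ and $\tilde B=\begin{pmatrix}0&0\\ Y&0\end{pmatrix}$. These satisfy $|\tilde A|=|X|\oplus 0$ and $|\tilde B|=|Y|\oplus 0$.

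Thompson's theorem in $\mathbb M_{2n}$ then gives unitaries $\tilde U,\tilde V\in\mathbb M_{2n}$ with
\[
(X^2+Y^2)^{1/2}\oplus 0\le \tilde U(|X|\oplus 0)\tilde U^*+\tilde V(|Y|\oplus 0)\tilde V^* .
\]

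The remaining step is to compress back to $\mathbb M_n$, and I expect this to be the main obstacle. Compressing to the first corner gives
\[
(X^2+Y^2)^{1/2}\le K_1|X|K_1^*+K_2|Y|K_2^*,
\]
where $K_1,K_2$ are the upper-left $n\times n$ blocks of $\tilde U,\tilde V$. These are contractions, not unitaries. For each $j$, however, $K_j|X|K_j^*$ has the same nonzero eigenvalues as $|X|^{1/2}K_j^*K_j|X|^{1/2}\le |X|$. So $\lambda_k(K_1|X|K_1^*)\le\lambda_k(|X|)$ for every $k$, and hence $K_1|X|K_1^*\le U|X|U^*$ for a suitable unitary $U$, obtained by aligning eigenvectors. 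The same argument applied to $|Y|$ gives $V$, and this finishes the proof.

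Alternatively, one could avoid dilation and work directly with Theorem~\ref{thm:AAP}, using isometries, on $\Hh\oplus\Hh$. In either version, the only delicate point is the eigenvalue-domination step that turns contraction congruences into unitary congruences, and this is standard in finite dimensions.
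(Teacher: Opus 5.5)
Your argument is correct. The block moduli $|\tilde C|=(X^2+Y^2)^{1/2}\oplus0$, $|\tilde A|=|X|\oplus0$ and $|\tilde B|=|Y|\oplus0$ are right. The corner compression does produce $K_1|X|K_1^*+K_2|Y|K_2^*$. The upgrade to unitary orbits is also valid: it uses $\lambda_k(MM^*)=\lambda_k(M^*M)$ for the square matrix $M=K_1|X|^{1/2}$, then Weyl monotonicity, then matching eigenbases.

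The paper gives no proof of this statement; it is quoted from Bourin and Lee. Within the paper, though, it is a one-line special case of Theorem~\ref{thm:qsym-thompson-matrix}. Take $A=\Ree Z$ and $B=i\,\Ima Z$, and note that $\qsym{X}=|X|$ for Hermitian $X$ and $\qsym{iY}=|Y|$. Equally short is Theorem~\ref{thm:Bourin--Uchiyama} with $f(t)=\sqrt t$ applied to $X^2$ and $Y^2$.

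What you wrote is in effect a hand-made instance of Proposition~\ref{prop:Thompson-rect} for $\binom{X}{0}+\binom{0}{Y}\in\mathbb M_{2n,n}$, but the reduction to the square case is different.
\begin{itemize}
\item The paper compresses with $W^*$ from the polar decomposition of the sum and applies Thompson's theorem in $\Mn$. It then only needs $|W^*A|\le|A|$, by operator monotonicity of $t^{1/2}$, so the unitaries come for free.
\item You dilate to $\mathbb M_{2n}$ and compress to a corner. This yields contraction congruences, which you then have to trade for unitary ones by eigenvalue matching.
\end{itemize}
Both routes work in finite dimensions, but only the paper's compression carries over to $\B(\Hh)$ (Proposition~\ref{prop:rect-AAP}). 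There, ``$KPK^*\le UPU^*$ for some isometry $U$'' is false in general: take $P$ a projection of infinite rank and corank on a separable space, and $K$ a coisometry with $KPK^*=I$. So your closing remark about running the same scheme with Theorem~\ref{thm:AAP} on $\Hh\oplus\Hh$ would need the polar-decomposition trick rather than eigenvalue matching.
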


Clearly, since the function $t \mapsto t^{p/2}$ is operator concave on $[0,\infty)$ for $0<p<2$, we have
\[
\frac{|Z|^{p}+|Z^{*}|^{p}}{2}
=
\frac{(|Z|^2)^{p/2}+(|Z^{*}|^2)^{p/2}}{2}
\le
\left( \frac{|Z|^2+|Z^{*}|^2}{2}\right)^{p/2}.
\]
Since both sides are positive semidefinite and the function $t\mapsto t^{1/p}$ is increasing on $[0,\infty)$, Weyl's monotonicity theorem yields
\[
\lambda_j\!\left(\left(\frac{|Z|^{p}+|Z^{*}|^{p}}{2}\right)^{1/p}\right)
\le
\lambda_j\!\left(\sqrt{\frac{|Z|^2+|Z^{*}|^2}{2}}\right)
\]
for all $j$, where $\lambda_j(\cdot)$ denotes the $j$-th largest eigenvalue. Therefore,
\[
\left(\frac{|Z|^{p}+|Z^{*}|^{p}}{2}\right)^{1/p}
\le
W\sqrt{\frac{|Z|^{2}+|Z^{*}|^{2}}{2}}\,W^*
\]
for some unitary matrix $W$.
Therefore, by Theorem~\ref{thm:Bourin_Lee}, for any $0<p<2$, we obtain
\[
\left(\frac{|Z|^{p}+|Z^{*}|^{p}}{2}\right)^{1/p}
\le
U|\Ree Z|U^{*}+V|\Ima Z|V^{*}.
\]
Bourin and Lee \cite{BL26} then raised the following question concerning whether the exponent $2$ is optimal.

\begin{ques}[{\cite[Question~4.6]{BL26}}]\label{ques:1}
	Let $p>0$. Suppose that, for every $Z\in\Mn$, one can find unitary matrices $U,V\in\Mn$ such that
	\begin{equation}\label{eq:Bourin_Lee_p_le_2}
		\left(\frac{|Z|^{p}+|Z^{*}|^{p}}{2}\right)^{1/p}
		\le
		U\,|\Ree\,Z|\,U^{*}+V\,|\Ima\,Z|\,V^{*}.
	\end{equation}
	Must we necessarily have $p\le 2$?
\end{ques}

Our next result answers Question~\ref{ques:1} affirmatively in every noncommutative dimension $n\ge2$ by constructing a $2\times2$ counterexample for $p>2$.

\begin{thm}\label{thm:main1}
	Let $n\ge2$ and $p>0$.
	If for every $Z\in\Mn$ there exist unitary matrices $U,V\in\Mn$ such that \eqref{eq:Bourin_Lee_p_le_2} holds, then necessarily $p\le2$.
\end{thm}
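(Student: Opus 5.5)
The plan is to exploit the fact that \eqref{eq:Bourin_Lee_p_le_2} forces \emph{scalar} necessary conditions, and to find a $2\times2$ matrix $Z$ violating one of them for every $p>2$. The embedding $Z\mapsto Z\oplus 0_{n-2}$ then gives the result for all $n\ge2$: the left side of \eqref{eq:Bourin_Lee_p_le_2} becomes $L_p(Z)\oplus 0$, with $L_p(Z):=\bigl(\tfrac{|Z|^p+|Z^*|^p}{2}\bigr)^{1/p}$, and $|\Ree Z|$, $|\Ima Z|$ are likewise padded by zeros. Taking traces in \eqref{eq:Bourin_Lee_p_le_2} gives the necessary condition
\[
\Tr L_p(Z)\;\le\;\Tr|\Ree Z|+\Tr|\Ima Z| .
\]
So it suffices to produce, for each $p>2$, a $Z\in\mathbb M_2$ with $\Tr L_p(Z)>\Tr|\Ree Z|+\Tr|\Ima Z|$.

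\textbf{Choice of test matrices.} I would take $Z=P+iQ$, where $P,Q$ are rank-one projections onto unit vectors at angle $\theta$, and write $c=\cos\theta$. Then $\Ree Z=P$ and $\Ima Z=Q$, so the right-hand trace is exactly $2$.

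\textbf{Sanity checks on this family.}
\begin{enumerate}[label=(\alph*)]
\item At $p=2$ we have $\tfrac{|Z|^2+|Z^*|^2}{2}=P+Q$, whose eigenvalues are $1\pm c$. Hence $\Tr L_2=\sqrt{1+c}+\sqrt{1-c}\approx 2-c^2/4$, which is consistent with Theorem~\ref{thm:Bourin_Lee}.
\item At $c=0$ the matrix $Z$ is normal (unitarily $\operatorname{diag}(1,i)$), so $L_p=I$ for every $p$, and the trace inequality is an equality.
\end{enumerate}
The strategy is therefore a perturbation expansion about $c=0$. Writing $\Tr L_p(Z)=2+\alpha(p)c^2+O(c^3)$ (after checking that the linear term vanishes by symmetry $c\mapsto -c$), the goal is to show $\alpha(p)>0$ for $p>2$, whereas $\alpha(2)=-1/4$.

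\textbf{Computation.} In a basis adapted to $P$ and $Q$, compute $|Z|^2=Z^*Z=P+Q+i[P,Q]$ and $|Z^*|^2=P+Q-i[P,Q]$. These are explicit $2\times2$ positive matrices. Each has eigenvalues $\mu_\pm$ which can be written in closed form, so $|Z|^p$ and $|Z^*|^p$ are available via the $2\times2$ functional calculus $f(X)=\frac{f(\mu_+)-f(\mu_-)}{\mu_+-\mu_-}(X-\mu_- I)+f(\mu_-)I$. Then:
\begin{enumerate}[label=(\roman*)]
\item average the two to get $M_p$;
\item read off $\Tr M_p$ and $\det M_p$;
\item obtain $\Tr M_p^{1/p}=\nu_+^{1/p}+\nu_-^{1/p}$ from the eigenvalues $\nu_\pm$ of $M_p$;
\item expand everything in $c$.
\end{enumerate}

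\textbf{Main obstacle and fallback.} This second-order expansion is where I expect the real work: the sign of $\alpha(p)$ must change exactly at $p=2$. It is not clear a priori that the threshold falls exactly at $p=2$ for this family, and $\alpha(p)$ might stay negative. If the trace test fails on this family, I would look for a counterexample on the other eigenvalue constraints instead. For $2\times2$ matrices, $UXU^*+VYV^*$ has eigenvalues majorized by $\lambda(X)+\lambda(Y)$. Comparing with $L_p$ therefore also requires
\[
\lambda_2(L_p)\le\lambda_1(|\Ree Z|)+\lambda_2(|\Ima Z|)\quad\text{and}\quad \lambda_2(L_p)\le\lambda_2(|\Ree Z|)+\lambda_1(|\Ima Z|),
\]
and in particular the necessary condition $\lambda_1(L_p)\le\|\Ree Z\|_\infty+\|\Ima Z\|_\infty$. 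I would test these on a nonnormal family such as $Z=\begin{pmatrix}1&t\\0&i\end{pmatrix}$ or $Z=P+isQ$ with a free parameter $s$, again expanding to second order in the perturbation. If needed, I would then optimize over $s$ so that the $p=2$ case is borderline and the $p>2$ case strictly fails. The fact that the operator power mean is (eigenvalue-wise) increasing in $p$ heuristically supports this: the threshold $p=2$ is where Theorem~\ref{thm:Bourin_Lee} is tight to second order.
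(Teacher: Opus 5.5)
\textbf{The test family fails for $2<p\le 3$.} Your reduction to $n=2$ via $Z\oplus 0_{n-2}$ and the trace test are exactly what the paper uses. The problem is the test family you chose. For $Z=P+iQ$, with $P,Q$ real in an adapted basis, one has $ZZ^*=\overline{Z^*Z}$ entrywise. The $2\times2$ functional calculus then gives $\frac{|Z|^p+|Z^*|^p}{2}=aI+b(P+Q)$ with real $a,b$. Its eigenvalues are $1\pm\frac p2 c+\frac p2\bigl(\frac p2-1\bigr)c^2+O(c^3)$, and hence
\[
\Tr\Bigl(\frac{|Z|^p+|Z^*|^p}{2}\Bigr)^{1/p}=2+\frac{p-3}{4}\,c^2+O(c^4).
\]
So $\alpha(p)=(p-3)/4$. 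It equals $-1/4$ at $p=2$, as you checked, but it is still negative on $(2,3)$. In fact, at $p=2$ we have $\sqrt{1+c}+\sqrt{1-c}<2$ on $(0,1]$, so by continuity this family satisfies the trace condition for every $c$ when $p$ is close to $2$. The worry you flagged therefore materializes: the main line only yields $p\le 3$. The fallback paragraph does not repair this. No matrix is exhibited for $2<p\le 3$, and the stated criterion (tune the parameter so that $p=2$ is borderline) is not what is needed.

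\textbf{The missing idea: perturb around a degenerate point.} The exponent $2$ comes from comparing $\theta^{2/p}$ with $\theta$. If $\frac{|Z|^p+|Z^*|^p}{2}$ has an eigenvalue of order $\theta^2$ emerging from $0$, its $1/p$-th power is of order $\theta^{2/p}$. Meanwhile $\Tr|\Ree Z|+\Tr|\Ima Z|$ moves only by $O(\theta)$. Around a base point with both singular values positive, such as your $\diag(1,i)$, everything is analytic. You then just get some quadratic coefficient whose sign change need not occur at $p=2$.

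The paper instead uses the rank-one matrix $Z_\theta=\begin{pmatrix}\cos\theta&0\\-\sin\theta&0\end{pmatrix}$, for which $|Z_\theta|$ and $|Z_\theta^*|$ are rank-one projections. The left trace is then exactly $\bigl(\frac{1+\cos\theta}{2}\bigr)^{1/p}+\bigl(\frac{1-\cos\theta}{2}\bigr)^{1/p}\approx 1+4^{-1/p}\theta^{2/p}$, against $1+\sin\theta$ on the right.

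Your family $Z=P+isQ$ can be rescued, but only by letting $s\to 0$ depending on $p$, i.e.\ by approaching the same degenerate point $\diag(1,0)$. The same computation gives
\[
\Tr L_p(Z)-(1+s)=\frac{s^2c^2}{1-s^2}\Bigl[\frac{(1-s^p)(s^{1-p}-1)}{p(1-s^2)}-\frac{1-s}{s}\Bigr]+O(c^4).
\]
At $p=2$ the bracket equals $-\frac{1-s}{2s}<0$ for every $s\in(0,1)$, so no fixed $s$ makes $p=2$ borderline. For any fixed $p>2$, however, the bracket is positive once $s$ is small enough, because $s^{1-p}\gg s^{-1}$.
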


\begin{rem}\label{rem:n=1}
	For $n=1$, the statement of Question~\ref{ques:1} is different:
	for a complex number $z$, the left-hand side of \eqref{eq:Bourin_Lee_p_le_2} equals $|z|$ for every $p>0$, while the right-hand side equals $|\Re z|+|\Im z|\ge |z|$.
	Hence \eqref{eq:Bourin_Lee_p_le_2} holds for all $p>0$ in dimension $1$.
	The necessity $p\le2$ is therefore a genuinely noncommutative phenomenon requiring $n\ge2$.
\end{rem}
Bourin and Lee \cite{BL26} further raised the following question on singular values.

\begin{rem}\label{rem:BL26-index-slip}
	In \cite[Corollary~4.7]{BL26}, the indices on the right-hand side appear with a typographical slip.
	Starting from the operator inequality in \cite[Corollary~4.3]{BL26} and applying the Weyl's inequality for singular values,
	\[
	\mu_{1+j+k}^{\downarrow}(A+B)\le \mu_{1+j}^{\downarrow}(A)+\mu_{1+k}^{\downarrow}(B)\qquad (A,B\ge0),
	\]
 where $\mu_j^{\downarrow}(\cdot)$ denotes the $j$-th largest singular value,
	one obtains the corrected bound
	\begin{equation*}
		\mu_{1+j+k}^{\downarrow}\!\left(\sqrt{\frac{|Z|^{2}+|Z^{*}|^{2}}{2}}\right)
		\le
		\mu_{1+j}^{\downarrow}(\Ree Z)+\mu_{1+k}^{\downarrow}(\Ima Z).
	\end{equation*}
	Accordingly, the natural corrected version of \cite[Question~4.8]{BL26} asks whether there exists $Z\in\mathbb K$ such that for every $p>2$ one has
	\[
	\mu_{1+j+k}^{\downarrow}\!\left(\left(\frac{|Z|^{p}+|Z^{*}|^{p}}{2}\right)^{1/p}\right)
	>
	\mu_{1+j}^{\downarrow}(\Ree Z)+\mu_{1+k}^{\downarrow}(\Ima Z)
	\]
	for some integers $j,k\ge0$.
\end{rem}

\begin{ques}[{\cite[Question~4.8]{BL26}}]\label{ques:2}
	Does there exist $Z\in\mathbb{K}$ such that for every $p>2$ one has
	\begin{equation}\label{eq:integers_j_k}
		\mu_{1+j+k}^{\downarrow}\!\left(\left(\frac{|Z|^{p}+|Z^{*}|^{p}}{2}\right)^{1/p}\right)
		>
		\mu_{1+j}^{\downarrow}(\Ree Z)+\mu_{1+k}^{\downarrow}(\Ima Z)
	\end{equation}
	for some pair of integers $j,k\ge0$?
\end{ques}

We answer Question~\ref{ques:2} in the affirmative.

\begin{thm}\label{thm:main2}
	There exists $Z\in\mathbb K$ and fixed integers $j,k\ge 0$ such that for every $p>2$,
	\eqref{eq:integers_j_k} holds.
	In our construction one may take $(j,k)=(2,0)$.
\end{thm}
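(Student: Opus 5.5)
My plan is to look for $Z$ of the form $X\oplus 0$, with $X$ a small finite matrix, so that $Z\in\mathbb K$ automatically. I will arrange that $\Ree Z$ has rank exactly $2$. Then $\mu_3^{\downarrow}(\Ree Z)=0$, and with $(j,k)=(2,0)$ inequality \eqref{eq:integers_j_k} reduces to
\[
\lambda_3\!\left(\Bigl(\tfrac{|X|^p+|X^*|^p}{2}\Bigr)^{1/p}\right)>\|\Ima X\|_\infty\qquad\text{for all }p>2 .
\]
The natural candidate is a perturbation $X=R+i\varepsilon S$ in $\mathbb M_3$. Here $R$ is a rank-two orthogonal projection, $S$ is a fixed Hermitian matrix with $\|S\|_\infty=1$, and $\varepsilon>0$ is small. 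I want $S$ to couple $\ker R$ to $\Ran R$, for instance $R=\diag(1,1,0)$ and $S$ with nonzero entries $s_{13}=s_{31}$.

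First I compute
\[
|X|^2=R+\varepsilon^2S^2+i\varepsilon[R,S],\qquad |X^*|^2=R+\varepsilon^2S^2-i\varepsilon[R,S].
\]
In the decomposition $\Ran R\oplus\ker R$, each of these is a block matrix of the form $\begin{pmatrix} I+O(\varepsilon^2) & \pm i\varepsilon C\\ \mp i\varepsilon C^* & \varepsilon^2 D\end{pmatrix}$. At $p=2$ the first-order terms cancel in the average, so the third eigenvalue of $\qsym{X}$ is only of order $\varepsilon$. This is consistent with the corrected Weyl bound of Remark~\ref{rem:BL26-index-slip}. For $p>2$ I apply $t\mapsto t^{p/2}$ to each matrix separately.

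The key step is to estimate the $\ker R$ corner of $(|X|^2)^{p/2}$. The eigenvector of $|X|^2$ near $\ker R$ tilts by an angle $\asymp\varepsilon$ toward $\Ran R$, where the eigenvalue is about $1$. Hence the corner picks up a contribution $\asymp\varepsilon^2|C|^2\cdot 1^{p/2}$ from the large eigenvalue. The contribution from the small eigenvalue is only $O(\varepsilon^{p})$. The same holds for $|X^*|^2$, and both contributions are positive, so they do not cancel. Therefore $\lambda_3\bigl(\tfrac{|X|^p+|X^*|^p}{2}\bigr)\ge c\,\varepsilon^2$, and after taking the $1/p$ power,
\[
\lambda_3 \gtrsim c^{1/p}\varepsilon^{2/p}\gg \varepsilon=\|\Ima X\|_\infty \qquad(\varepsilon\to0).
\]
I will make this rigorous with min–max on the third eigenvalue, or more simply by computing the $3\times3$ spectra exactly for a concrete choice of $S$.

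The main obstacle is uniformity as $p\downarrow 2$. For a fixed $\varepsilon$, the ratio $\varepsilon^{2/p}/\varepsilon\to1$, so the asymptotic gain disappears, and I must compare constants at $p=2$ exactly. My plan is to pick $S$ so that the $p=2$ comparison is already strict. That means $\lambda_3\bigl((R+\varepsilon^2S^2)^{1/2}\bigr)>\varepsilon$, or at least equality with a strictly positive derivative in $p$.

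I will try $S$ with a large diagonal entry on $\ker R$ in addition to the coupling. Then $\varepsilon^2(S^2)_{33}$ dominates $\varepsilon^2$, up to the Schur-complement correction, which I need to control. Next I will check that $p\mapsto \lambda_3$ is nondecreasing. In the commuting limit this is the monotonicity of power means; otherwise I will verify it on the explicit $3\times 3$ example.

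If the single-matrix approach fails near $p=2$, I will fall back on a direct-sum construction. The idea is $Z=\bigoplus_k 2^{-k}X_{\varepsilon_k}$, with $\varepsilon_k\to0$ and block scalings chosen so that $\Ree Z$ keeps the prescribed ordering of singular values. Different blocks would then handle different ranges of $p$. Bookkeeping which singular values land in positions $3$ and $1$ is the delicate part of that route.
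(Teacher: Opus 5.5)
The gap is the step you yourself flag as the main obstacle, and it is the whole content of the statement: one fixed $Z$ (with fixed $(j,k)$) must work for \emph{every} $p>2$.

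Your estimate $\lambda_3\gtrsim c^{1/p}\varepsilon^{2/p}\gg\varepsilon$ only shows that for each fixed $p>2$ some $\varepsilon=\varepsilon(p)$ works. Nothing prevents $\varepsilon(p)\to0$ as $p\downarrow2$.

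Your proposed cure, making the comparison strict at $p=2$, is impossible for every choice of $S$. The very bound you cite from Remark~\ref{rem:BL26-index-slip} gives
$\mu_3^{\downarrow}(\qsym{Z})\le\mu_3^{\downarrow}(\Ree Z)+\mu_1^{\downarrow}(\Ima Z)=\|\Ima Z\|_\infty$
whenever $\Ree Z$ has rank $2$. In your ansatz this is elementary: $\qsym{X}^2=R+\varepsilon^2S^2\le R+\varepsilon^2\|S\|_\infty^2 I$, whose third eigenvalue is $\|\Ima X\|_\infty^2$. A large entry $(S^2)_{33}$ is therefore useless, since $(S^2)_{33}\le\|S\|_\infty^2$; it also breaks your normalization $\|S\|_\infty=1$.

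The only viable mechanism is equality at $p=2$ followed by strict growth in $p$. You neither exhibit an $S$ giving equality nor prove the monotonicity. The direct-sum fallback does not help either: with two or more blocks, $\Ree Z$ has rank larger than $2$, so $\mu_3^{\downarrow}(\Ree Z)>0$ and your initial reduction is lost.

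Your ansatz can be completed along the equality route. Take $S$ Hermitian with $S^2=I$, e.g.\ $S=E_{13}+E_{31}+E_{22}$ (matrix units). Then $\qsym{X}^2=R+\varepsilon^2 I$, so equality holds at $p=2$.

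\emph{Structure of $X$.} $X$ acts as $1+i\varepsilon$ on $e_2$ and as the complex symmetric block $\begin{pmatrix}1&i\varepsilon\\ i\varepsilon&0\end{pmatrix}$ on $\mathrm{span}\{e_1,e_3\}$. Since $X^{\mathsf T}=X$, one has $|X^*|^p=\overline{|X|^p}$. The block of $|X|^2$, namely $\begin{pmatrix}1+\varepsilon^2&i\varepsilon\\ -i\varepsilon&\varepsilon^2\end{pmatrix}$, has purely imaginary off-diagonal entries, and hence so does the block of $|X|^p$.

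\emph{Diagonal form.} It follows that $\frac{|X|^p+|X^*|^p}{2}$ is diagonal. Its $(3,3)$ entry is $w_1\sigma_1^p+w_2\sigma_2^p$, where $\sigma_1>\sigma_2>0$ are the singular values of the block and $w_1,w_2>0$ with $w_1+w_2=1$. Note $\sigma_1\sigma_2=\varepsilon^2$ and $\sigma_1-\sigma_2=1$.

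\emph{Conclusion.} The $1/p$-th power of this entry is a weighted power mean of $\sigma_1,\sigma_2$. It is strictly increasing in $p$ and equals $\varepsilon$ at $p=2$. The other two diagonal entries give at least $\sqrt{1+\varepsilon^2}>\varepsilon$. So \eqref{eq:integers_j_k} holds with $(j,k)=(2,0)$ for all $p>2$ and any fixed $\varepsilon>0$.

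The paper uses the same mechanism (equality at $p=2$, then power-mean monotonicity) but avoids all perturbation theory. For the truncated shift $Ze_1=e_2$, $Ze_2=e_3$, the moduli $|Z|=\diag(1,1,0)$ and $|Z^*|=\diag(0,1,1)$ commute. The third singular value is then $\bigl(\frac{1^p+0^p}{2}\bigr)^{1/p}=2^{-1/p}$, against $\mu_3^{\downarrow}(\Ree Z)+\mu_1^{\downarrow}(\Ima Z)=0+2^{-1/2}$.
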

Bourin and Lee \cite{BL26}  finally asked for matrix analogues of Euler's quadrilateral identity.

\begin{ques}[{\cite[Question~4.10]{BL26}}]\label{ques:3}
	What are the matrix versions, if any, of Euler's quadrilateral identity
	\[
	\|x+y+z\|_2^{2}+\|x\|_2^{2}+\|y\|_2^{2}+\|z\|_2^{2}
	=
	\|x+y\|_2^{2}+\|y+z\|_2^{2}+\|z+x\|_2^{2}
	\]
	for three points $x,y,z\in\mathbb{C}^{n}$?
\end{ques}

A direct computation yields the following Euler-type operator identity.

\begin{prop}[Euler operator identity]\label{prop:Euler-operator}
	Let $A,B,C\in\Mn$. Then
	\begin{equation}\label{eq:Euler-op}
		|A+B+C|^{2}+|A|^{2}+|B|^{2}+|C|^{2}
		=
		|A+B|^{2}+|B+C|^{2}+|C+A|^{2}.
	\end{equation}
\end{prop}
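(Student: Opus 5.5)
The plan is to expand both sides of \eqref{eq:Euler-op} using only the sesquilinearity of the map \((X,Y)\mapsto X^*Y\). Since \(|X|^2=X^*X\), every term on either side is a quadratic expression in \(A,B,C\). We compare the coefficients of the diagonal terms \(A^*A,B^*B,C^*C\) and of the cross terms \(A^*B,B^*A,\dots\) separately. No positivity or spectral information is needed; this is the operator analogue of the scalar polarization argument for Euler's identity.

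Concretely, write
\[
|A+B+C|^2=A^*A+B^*B+C^*C+(A^*B+B^*A)+(B^*C+C^*B)+(C^*A+A^*C).
\]
So the left-hand side of \eqref{eq:Euler-op} equals \(2(A^*A+B^*B+C^*C)\) plus the three symmetrized cross terms, each appearing once. On the right-hand side, expand each pairwise term, for instance
\[
|A+B|^2=A^*A+B^*B+A^*B+B^*A,
\]
and similarly for \(|B+C|^2\) and \(|C+A|^2\). Each of \(A^*A,B^*B,C^*C\) then occurs in exactly two of the three summands, with coefficient \(2\). Each symmetrized cross term \(X^*Y+Y^*X\) occurs in exactly one summand, namely \(|X+Y|^2\). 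The coefficients therefore agree term by term, which proves the identity.

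There is no real obstacle here. The only point needing care is noncommutativity: we must keep \(A^*B\) and \(B^*A\) as separate terms rather than merging them into something like \(2\Ree(A^*B)\) prematurely. Since they always appear together in the pair \(A^*B+B^*A\) on both sides, the bookkeeping closes. The same computation works verbatim in \(\B(\Hh)\), and also for \(|\cdot^*|^2\) in place of \(|\cdot|^2\) by applying it to \(A^*,B^*,C^*\). Averaging the two versions then yields the corresponding identity for \(\qsym{\cdot}^2\), which is presumably the form the paper wants next.
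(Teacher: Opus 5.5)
Your proposal is correct and follows the paper's own argument: the paper justifies the identity by direct computation, i.e.\ expanding each $|X|^2=X^*X$ sesquilinearly and matching the terms $A^*A,B^*B,C^*C$ (coefficient $2$ on each side) and the symmetrized cross terms $A^*B+B^*A$, etc.\ (coefficient $1$ on each side), exactly as you do. The paper also recovers the identity in the proof of Theorem~\ref{thm:main3} as $X^*X=Y^*Y$, where $X=(A+B+C,A,B,C)^T$ and $Y=(A+B,B+C,C+A,0)^T$ are related by the unitary $X=(H\otimes I_n)Y$, but this is the same computation organized differently.
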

Bourin and Lee \cite{BL26} suggested exploring isometry/unitary-orbit refinements of Euler-type identities.
Our following three results give isometry-orbit refinements of \eqref{eq:Euler-op}.

\begin{thm}\label{thm:main3}
	Let $A,B,C\in\Mn$. Then there exist isometries $U_{ij}\in \mathbb{M}_{4n,n}$, $1\le i,j\le 4$, such that
	\begin{align}\label{eq:main3}
		|A+B|^2\oplus|B+C|^2\oplus|C+A|^2\oplus 0
		&=
		\frac14\sum_{j=1}^4U_{1j}\,|A+B+C|^2\,U_{1j}^*
		+\frac14\sum_{j=1}^4U_{2j}\,|A|^2\,U_{2j}^*\nonumber\\
		&\quad+
		\frac14\sum_{j=1}^4U_{3j}\,|B|^2\,U_{3j}^*
		+
		\frac14\sum_{j=1}^4U_{4j}\,|C|^2\,U_{4j}^*.
	\end{align}
\end{thm}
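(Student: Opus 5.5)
The plan is to realise both sides of \eqref{eq:main3} as pieces of one $4n\times 4n$ Gram matrix, using two standard unitary-orbit devices: a block decomposition lemma and a pinching by diagonal sign unitaries.

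\textbf{Step 1: the scalar Gram matrices and the orthogonal matrix $V$.} Consider the real matrices
\[
M_1=\begin{pmatrix}1&1&1\\1&0&0\\0&1&0\\0&0&1\end{pmatrix},\qquad
M_2=\begin{pmatrix}1&1&0\\0&1&1\\1&0&1\\0&0&0\end{pmatrix}.
\]
Their rows encode the combinations $A+B+C,A,B,C$ and $A+B,B+C,C+A,0$ respectively. One checks that $M_1^TM_1=M_2^TM_2=I_3+J_3$; this is exactly the scalar Euler identity. Both matrices have rank $3$, so there is a real orthogonal $V=(v_{ij})\in\mathbb M_4$ with $VM_1=M_2$.

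Set $X_1=A+B+C$, $X_2=A$, $X_3=B$, $X_4=C$, and let $Y_1,Y_2,Y_3,Y_4$ be $A+B,\,B+C,\,C+A,\,0$. Then $Y_i=\sum_j v_{ij}X_j$. Form the block rows $X'=[X_1\ X_2\ X_3\ X_4]$ and $Y'=[Y_1\ Y_2\ Y_3\ Y_4]$ in $\mathbb M_{n,4n}$, so that $Y'=X'(V^T\otimes I_n)$. Consequently
\[
Y'^*Y'=(V\otimes I_n)\,X'^*X'\,(V^T\otimes I_n),
\]
and the diagonal blocks of $Y'^*Y'$ are $|A+B|^2,|B+C|^2,|C+A|^2,0$.

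\textbf{Step 2: decomposition lemma.} Let $M\ge0$ be a $4\times4$ block matrix with diagonal blocks $M_{jj}$. Write $M=L^*L$ with $L=M^{1/2}=[L_1\ \cdots\ L_4]$, where each $L_j$ is $4n\times n$. Then $L_j^*L_j=M_{jj}$. Next, $L^*L$ is unitarily equivalent to $LL^*=\sum_j L_jL_j^*$. Finally, each $L_jL_j^*$ equals $W_jM_{jj}W_j^*$ for an isometry $W_j\in\mathbb M_{4n,n}$, by the polar decomposition of $L_j$, which can be chosen to be an isometry in finite dimensions. Hence $M=\sum_j U_jM_{jj}U_j^*$ with isometries $U_j\in\mathbb M_{4n,n}$.

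Apply this to $M=X'^*X'$, whose diagonal blocks are $|A+B+C|^2,|A|^2,|B|^2,|C|^2$. This gives
\[
Y'^*Y'=\sum_{j=1}^4 (V\otimes I_n)U_j\,|X_j|^2\,U_j^*(V^T\otimes I_n).
\]

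\textbf{Step 3: pinching.} Let $\varepsilon^{(1)},\dots,\varepsilon^{(4)}$ be the rows of a $4\times4$ Hadamard matrix with $\pm1$ entries, and set $S_k=\diag(\varepsilon^{(k)})\otimes I_n$. These vectors satisfy $\sum_k\varepsilon^{(k)}_i\varepsilon^{(k)}_l=4\delta_{il}$. Hence $\tfrac14\sum_kS_kNS_k$ is the block-diagonal part of any $N\in\mathbb M_{4n}$. Applying this to $N=Y'^*Y'$ gives
\[
|A+B|^2\oplus|B+C|^2\oplus|C+A|^2\oplus0=\frac14\sum_{k=1}^4\sum_{j=1}^4 U_{jk}|X_j|^2U_{jk}^*,
\]
where $U_{jk}:=S_k(V\otimes I_n)U_j$. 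Each $U_{jk}$ is an isometry, being a unitary times an isometry. Relabelling indices yields \eqref{eq:main3}.

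\textbf{Main obstacle.} The only nonroutine point is arranging that a single orthogonal change of variables carries the family $(A+B+C,A,B,C)$ onto $(A+B,B+C,C+A,0)$. This is what Step~1 secures via the Gram-matrix coincidence $M_1^TM_1=M_2^TM_2$, rather than working with the operators directly. Once $V$ exists, the decomposition lemma and the pinching are standard. I would double-check two details: the ordering convention in $Y'=X'(V^T\otimes I)$, and that the polar factors $W_j$ can indeed be taken to be isometries in finite dimensions.
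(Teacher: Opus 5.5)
Your proof is correct, but it produces the sixteen isometric terms by a different mechanism than the paper.

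\textbf{The paper's route.} The paper uses the explicit Hadamard matrix $H$ with $X=(H\otimes I_n)Y$, so your $V$ can simply be taken to be $H^{T}$. It forms $T=\mathcal U\Delta_Y\mathcal U^*$ with $\Delta_Y=\diag(A+B,B+C,C+A,0)$. The sixteen blocks of $T$ are $\pm\frac12$ times $A+B+C$, $A$, $B$, $C$, each occurring exactly four times. The paper then applies the partitioned Pythagoras theorem (Lemma~\ref{lem:pythagoras-partitioned}) to get $|T|^2=\sum_{i,j}W_{ij}|T_{ij}|^2W_{ij}^*$. The weight $\frac14$ and the fourfold multiplicity both come from this block pattern. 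Since $|T|^2=\mathcal U|\Delta_Y|^2\mathcal U^*$, the block-diagonal left-hand side appears directly, with no pinching step.

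\textbf{Your route.} You apply the square-root/polar decomposition argument to the Gram matrix $[X_i^*X_j]$. This argument is exactly the paper's Lemma~\ref{lem:isometry-decomp}, which the paper uses only for Theorems~\ref{thm:first} and~\ref{thm:second}. You then transport by $V\otimes I_n$ and pinch with four sign unitaries, so the $\frac14$ and the repetition come from averaging rather than from the combinatorics of $T$.

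\textbf{What each buys.} Your argument is self-contained and does not rely on the partitioned Pythagoras theorem. It also gives a stronger intermediate fact: the full Gram matrix $[Y_i^*Y_j]$ is already a sum of only four isometric conjugates of $|A+B+C|^2,|A|^2,|B|^2,|C|^2$, each with weight one. Moreover, it extends verbatim to any unitary relation $Y=(V\otimes I_n)X$ between $m$-tuples, pinching with the $m$ diagonal unitaries built from $m$-th roots of unity. It does not need the special fact that the blocks of $\mathcal U\Delta_Y\mathcal U^*$ are signed multiples of the $X_j$. The paper's route is shorter once the partitioned Pythagoras theorem is available.

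\textbf{Your two flagged details.} Both are fine. The $i$-th block of $X'(V^{T}\otimes I_n)$ is $\sum_j v_{ij}X_j=Y_i$. Since $L_j\in\mathbb{M}_{4n,n}$ and $4n\ge n$, the partial isometry in its polar decomposition extends to an isometry, exactly as in Step~3 of the proof of Lemma~\ref{lem:isometry-decomp}.
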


\begin{thm}\label{thm:first}
	Let $A,B,C\in \Mn$. Then there exist three isometries $U_1,U_2,U_3\in\mathbb{M}_{3n,n}$ such that
	\[
	|A+B|^2\oplus|B+C|^2\oplus|A+C|^2
	= \frac{1}{3}\sum_{k=1}^3U_k\,\left( |A+B+C|^2+|A|^2+|B|^2+|C|^2\right) \,U_k^*.
	\]
\end{thm}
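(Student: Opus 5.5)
The plan is to combine the Euler operator identity (Proposition~\ref{prop:Euler-operator}) with a factorization through a $3n\times n$ column and a pinching by a cyclic diagonal unitary. Put
\[
P:=|A+B|^2,\qquad Q:=|B+C|^2,\qquad R:=|A+C|^2,\qquad S:=|A+B+C|^2+|A|^2+|B|^2+|C|^2 .
\]
By \eqref{eq:Euler-op}, $S=P+Q+R$. So it suffices to show that for any positive $P,Q,R\in\Mn$ there are isometries $U_1,U_2,U_3\in\mathbb M_{3n,n}$ with
\[
P\oplus Q\oplus R=\frac13\sum_{k=1}^3 U_k\,(P+Q+R)\,U_k^*.
\]

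\emph{Step 1 (factorization).} Form the column
\[
T:=\begin{pmatrix}P^{1/2}\\ Q^{1/2}\\ R^{1/2}\end{pmatrix}\in\mathbb M_{3n,n}.
\]
Then $T^*T=P+Q+R=S$, so $|T|=S^{1/2}$. Take the polar decomposition $T=W|T|=WS^{1/2}$. Since $T$ has only $n$ columns and $3n\ge n$, the partial isometry $W$ can be extended on $\ker S^{1/2}$ to an isometry $W\in\mathbb M_{3n,n}$, and the identity $T=WS^{1/2}$ still holds. Consequently
\[
TT^*=W\,S\,W^*,
\]
and $TT^*$ is the $3\times3$ block matrix $\bigl(X_iX_j\bigr)_{i,j}$ with $X_1=P^{1/2}$, $X_2=Q^{1/2}$, $X_3=R^{1/2}$. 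Its diagonal blocks are exactly $P,Q,R$.

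\emph{Step 2 (pinching).} Let $\omega=e^{2\pi i/3}$ and $D=\operatorname{diag}(I_n,\omega I_n,\omega^2 I_n)$. For any $3\times3$ block matrix $M$,
\[
\frac13\sum_{k=1}^3 D^kMD^{-k}
\]
is its block-diagonal part. The reason is that the $(i,j)$ block gets multiplied by $\frac13\sum_k\omega^{k(i-j)}=\delta_{ij}$. Applying this to $M=TT^*$ gives
\[
P\oplus Q\oplus R=\frac13\sum_{k=1}^3 D^kW\,S\,W^*D^{-k}.
\]
Set $U_k:=D^kW$. Each $U_k$ is an isometry because $D$ is unitary and $W$ is an isometry. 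This gives the claimed identity once we substitute back $S=|A+B+C|^2+|A|^2+|B|^2+|C|^2$.

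\emph{Main obstacle.} There is no serious obstacle. The only point that needs care is ensuring that $W$ can be taken to be a genuine isometry rather than only a partial isometry. This holds because $\dim\ker S^{1/2}\le\dim(\Ran T)^{\perp}$, which follows from $\rank T=\rank S^{1/2}\le n$ together with $3n-\rank T\ge n-\rank S^{1/2}$. Hence $W$ extends isometrically on $\ker S^{1/2}$ into $(\Ran T)^{\perp}$.
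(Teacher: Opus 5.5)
Your argument is correct, including the isometric extension of $W$ across $\ker S^{1/2}$, but it is organized differently from the paper's proof.

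The paper proceeds in three moves:
\begin{enumerate}
\item It conjugates $|A+B|^2\oplus|B+C|^2\oplus|A+C|^2$ by the Fourier unitary $F_3\otimes I_n$.
\item It uses the Euler identity to see that the resulting positive $3\times 3$ block matrix $H$ has every diagonal block equal to $S/3$.
\item It applies the general Lemma~\ref{lem:isometry-decomp}, namely $H=\sum_k V_kH_{kk}V_k^*$ (obtained from polar decompositions of the block columns of $H^{1/2}$), and then conjugates back.
\end{enumerate}
You instead realize the direct sum as the block diagonal of the single matrix $TT^*=WSW^*$, which has rank at most $n$. You then remove the off-diagonal blocks with the pinching $\frac13\sum_k D^k(\cdot)D^{-k}$.

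Underneath, these are the same construction. The $k$-th block column of $H^{1/2}$ is exactly $\frac{1}{\sqrt3}(F_3\otimes I_n)D^{-(k-1)}T$. So unwinding the paper's lemma yields precisely your isometries $U_k=D^{-(k-1)}W$, up to relabeling and the free choice of extension on $\ker S$. The Fourier conjugation simply converts your diagonal phase unitaries into block cyclic shifts.

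Your version is shorter and gives explicit isometries: one isometry twisted by cube roots of unity. It avoids computing the blocks of $H$ and needs no separate lemma. The paper's version isolates a reusable decomposition lemma for arbitrary positive block matrices, which it applies again in Theorem~\ref{thm:second}. Your pinching argument with fourth roots of unity would handle that case just as directly.
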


\begin{thm}\label{thm:second}
	Let $A,B,C\in \Mn$. Then there exist four isometries $U_1,U_2,U_3,U_4\in\mathbb{M}_{4n,n}$ such that
	\[
	|A+B+C|^2\oplus|A|^2\oplus|B|^2\oplus|C|^2
	=
	\frac{1}{4}\sum_{k=1}^4 U_k\,\left( |A+B|^2+|B+C|^2+|A+C|^2\right) \,U_k^*.
	\]
\end{thm}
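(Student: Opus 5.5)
The plan is to represent the left-hand side as a sum of four Gram matrices $G_k^*G_k$, one for each row of a $4\times4$ real orthogonal matrix. Each $G_kG_k^*$ will equal $\tfrac14$ of the right-hand sum, by the Euler operator identity of Proposition~\ref{prop:Euler-operator}. Throughout write
\[
T_1:=A+B+C,\quad T_2:=A,\quad T_3:=B,\quad T_4:=C,\qquad S:=|A+B|^2+|B+C|^2+|A+C|^2 .
\]
Proposition~\ref{prop:Euler-operator} says exactly that $\sum_{i=1}^4 |T_i|^2=S$.

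Fix the real orthogonal matrix
\[
H=(h_{ki})=\frac12\begin{pmatrix}1&1&1&1\\1&-1&1&-1\\1&1&-1&-1\\1&-1&-1&1\end{pmatrix}.
\]
It satisfies $h_{ki}^2=\tfrac14$ for all $k,i$ and $\sum_k h_{ki}h_{kj}=\delta_{ij}$. For $k=1,\dots,4$ define the block row
\[
G_k:=\bigl(h_{k1}|T_1|,\ h_{k2}|T_2|,\ h_{k3}|T_3|,\ h_{k4}|T_4|\bigr)\in\mathbb M_{n,4n}.
\]
Using positive moduli $|T_i|$ rather than the $T_i$ themselves is the key choice. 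Equivalently, each $T_i$ is first multiplied on the left by the adjoint of the unitary in its polar decomposition, which exists in finite dimensions.

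Two computations then finish the proof. First, the $(i,j)$ block of $G_k^*G_k$ is $h_{ki}h_{kj}|T_i||T_j|$. Summing over $k$ and using orthogonality of the columns of $H$ kills all off-diagonal blocks, so
\[
\sum_{k=1}^4 G_k^*G_k=|T_1|^2\oplus|T_2|^2\oplus|T_3|^2\oplus|T_4|^2,
\]
which is the left-hand side of the theorem. Second, since $h_{ki}^2=\tfrac14$,
\[
G_kG_k^*=\tfrac14\sum_i|T_i|^2=\tfrac14 S
\]
by the Euler identity. It remains to pass from $G_kG_k^*$ to $G_k^*G_k$. Take a polar decomposition $G_k^*=U_k|G_k^*|$ with $U_k\in\mathbb M_{4n,n}$. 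Then $|G_k^*|=(G_kG_k^*)^{1/2}=\tfrac12 S^{1/2}$. In finite dimensions the partial isometry $U_k$ can be extended to an isometry $\mathbb C^n\to\mathbb C^{4n}$, because $\dim\ker|G_k^*|\le 4n-\rank G_k^*$. This gives
\[
G_k^*G_k=U_k\,(G_kG_k^*)\,U_k^*=\tfrac14\,U_kSU_k^*,
\]
and summing over $k$ yields the claimed identity.

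The main obstacle is the obvious first attempt, rows $(h_{k1}T_1,\dots,h_{k4}T_4)$. That choice produces $G_kG_k^*=\tfrac14\sum_i|T_i^*|^2$, which is the wrong-sided modulus, and Euler's identity applied to adjoints does not return $S$. Replacing $T_i$ by $|T_i|$ leaves the diagonal blocks $|T_i|^2$ of $G_k^*G_k$ unchanged, so the left-hand side is still recovered, while it turns $G_kG_k^*$ into $\tfrac14\sum_i|T_i|^2=\tfrac14 S$.
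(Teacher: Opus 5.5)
Your proof is correct and is the paper's argument once unwound. The paper conjugates $E:=|A+B+C|^2\oplus|A|^2\oplus|B|^2\oplus|C|^2$ by $W_4=F_4\otimes I_n$ (its closing remark notes a Hadamard matrix works equally well) and applies Lemma~\ref{lem:isometry-decomp}; the block columns $R_k$ of $(W_4EW_4^*)^{1/2}=W_4E^{1/2}W_4^*$ then satisfy $W_4^*R_k=E^{1/2}W_4^*(e_k\otimes I_n)$, which is exactly your $G_k^*$ with $F_4$ in place of $H$, so you have simply written that factorization down directly and skipped the detour through the conjugated block matrix and its square root.
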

\begin{rem}
	The Euler-type orbit identities in Proposition~\ref{prop:Euler-operator} and Theorems~\ref{thm:first} and~\ref{thm:second}
	also admit quadratic symmetric modulus versions.
	Indeed, these two results extend verbatim to rectangular matrices with a fixed number of columns,
	since their proofs only involve the positive matrices
	\(
	|A|^2=A^*A
	\)
	and the Euler identity \eqref{eq:Euler-op},
	which remains valid in the rectangular setting.
	Applying these rectangular versions to the lifting
$
	\Gamma(T):=\frac1{\sqrt2}\binom{T}{T^*}\in\mathbb M_{2n,n},
$
	we obtain the corresponding analogues for quadratic symmetric modulus.
\end{rem}

Naturally, we ask the following question.
\begin{ques}\label{ques:3n-four-isometries}
	Does there exist isometries $U_1,U_2,U_3, U_4\in \mathbb{M}_{3n,n}$  such that
	\begin{align*}
		|A+B|^2\oplus|B+C|^2\oplus|A+C|^2
		&=U_1\,|A+B+C|^2\,U_{1}^*
		+U_{2}\,|A|^2\,U_{2}^*\\
		&\quad+U_{3}\,|B|^2\,U_{3}^*
		+U_{4}\,|C|^2\,U_{4}^* \ ?
	\end{align*}
\end{ques}
Unfortunately, the answer to Question~\ref{ques:3n-four-isometries} is negative.
In fact,	take $A=B=C=I_n$. Then
\[
|A+B|^2=|2I_n|^2=4I_n,\qquad |A+B+C|^2=|3I_n|^2=9I_n,
\qquad |A|^2=|B|^2=|C|^2=I_n.
\]
Hence the desired identity would become
\[
4I_{3n}
=
9U_1U_1^*+U_2U_2^*+U_3U_3^*+U_4U_4^*.
\]
Since all terms on the right-hand side are positive semidefinite, we have
\[
4I_{3n}\ge 9U_1U_1^*.
\]
But $U_1U_1^*$ is an orthogonal projection (of rank $n$), so $\|9U_1U_1^*\|_\infty=9$ while
$\|4I_{3n}\|_\infty=4$, a contradiction. Therefore such isometries cannot exist in general.

\subsection{Future directions}
		After early versions of this paper were completed, some related works appeared. Bourin and Lee \cite{BL26+} established triangle inequalities for unitarily invariant norms on arithmetic symmetric modulus, while the author \cite{Zha26} studied triangle inequalities for unitarily invariant norms and Schatten \(p\)-norms on two symmetric moduli.  Future directions for further study include the Lipschitz continuity of these moduli and related problems.
		
	\subsection{Organization of this paper} The paper is organized as follows.
	Section~\ref{sec:counterexample-bhl-sym} answers Question~\ref{ques:Bourin-Harada-Lee} and proves Theorem~\ref{thm:non-exist-sym}.
	Section~\ref{sec:rectangular-thompson} establishes a rectangular Thompson's inequality and uses it to prove Theorems~\ref{thm:qsym-thompson-matrix} and~\ref{thm:equality-case-matrix}.
	Section~\ref{sec:infinite-dimensional} extends the quadratic symmetric Thompson inequality to operators on Hilbert spaces and proves the corresponding equality characterization.
	Section~\ref{sec:subadditivity} proves the concave-function subadditivity theorem for the quadratic symmetric modulus and derives its norm and trace consequences.
	Section~\ref{sec:condition-number} obtains condition-number estimates for the symmetric moduli and shows that the coefficient is sharp.
	Section~\ref{sec:clarkson} develops Clarkson--McCarthy type inequalities for the quadratic symmetric modulus, both in operator order and for unitarily invariant norms.
	Sections~\ref{sec:proof-q23}, \ref{sec:proof-main1}, and~\ref{sec:proof-main2} answer three questions raised in~\cite{BL26}.
	Finally, Section~\ref{sec:euler-isometry} establishes Euler-type operator identities together with several isometry-orbit refinements.

\subsection*{Acknowledgments} The author is very grateful to Professor Bourin for his valuable suggestions.
This work is supported by the China Scholarship Council, the Young Elite Scientists Sponsorship Program for PhD Students (China Association for Science and Technology), and the Fundamental Research Funds for the Central Universities at Xi'an Jiaotong University (Grant No.~xzy022024045). 
\section{A counterexample to Question~\ref{ques:Bourin-Harada-Lee} and proof of Theorem~\ref{thm:non-exist-sym}}
\label{sec:counterexample-bhl-sym}
	
In this section, we answer Question~\ref{ques:Bourin-Harada-Lee} and establish Theorem~\ref{thm:non-exist-sym}.
	
\noindent\emph{Answer to Question~\ref{ques:Bourin-Harada-Lee}.}
The answer is negative. Indeed, consider
\[
f(t)=t-t^2,\qquad t\ge0.
\]
Then \(f\) is concave on \([0,\infty)\) and \(f(0)=0\). Let
\[
A=\begin{pmatrix}1&0\\0&0\end{pmatrix},
\qquad
B=\begin{pmatrix}c^2&cs\\ cs&s^2\end{pmatrix},
\qquad 0<c<1,\quad s=\sqrt{1-c^2}.
\]
Since \(A\) and \(B\) are rank-one projections, we have \(A^2=A\) and \(B^2=B\), and therefore
\[
f(A)=A-A^2=0,
\qquad
f(B)=B-B^2=0.
\]
Thus, for all unitaries \(U,V\),
\[
Uf(A)U^*+Vf(B)V^*=0.
\]
If the conclusion of Theorem~\ref{thm:Bourin--Uchiyama} were valid under the weaker assumption
\(f(0)=0\) and concavity alone, then we would have \(f(A+B)\le0\).

However, \(A+B\) has trace \(2\) and determinant \(1-c^2\), so its eigenvalues are \(1\pm c\).
Consequently, the eigenvalues of \(f(A+B)\) are
\[
f(1+c)=-c(1+c)
\qquad\text{and}\qquad
f(1-c)=c(1-c).
\]
Since \(0<c<1\), the latter is strictly positive. Hence \(f(A+B)\) has a positive eigenvalue, so
\(f(A+B)\nleq0\). Therefore there do not exist unitaries \(U,V\) such that
\[
f(A+B)\le Uf(A)U^*+Vf(B)V^*.
\]
This proves that Theorem~\ref{thm:Bourin--Uchiyama} does not extend to arbitrary concave functions
satisfying only \(f(0)=0\). In particular, the monotonicity assumption cannot be removed.\qed
	\begin{proof}[Proof of Theorem~\ref{thm:non-exist-sym}]
		Take
		\[
		A=\begin{pmatrix}-1&-1\\0&-1\end{pmatrix},
		\qquad
		B=\begin{pmatrix}0&-1\\0&0\end{pmatrix}\in \mathbb{M}_2.
		\]
		A direct computation gives
		\[
		|A|_{\mathrm{sym}}=\frac1{2\sqrt5}\begin{pmatrix}5&2\\2&5\end{pmatrix},
		\qquad
		|B|_{\mathrm{sym}}=\frac12 I_2,
		\qquad
		|A+B|_{\mathrm{sym}}=\frac1{\sqrt2}\begin{pmatrix}2&1\\1&2\end{pmatrix}.
		\]
		Hence
		\[
		\bigl\||A+B|_{\mathrm{sym}}\bigr\|_{\infty}=\frac{3}{\sqrt2},
		\qquad
		\bigl\||A|_{\mathrm{sym}}\bigr\|_{\infty}=\frac{7}{2\sqrt5},
		\qquad
		\bigl\||B|_{\mathrm{sym}}\bigr\|_{\infty}=\frac12,
		\]
		so
		\[
		\bigl\||A+B|_{\mathrm{sym}}\bigr\|_{\infty}
		>
		\bigl\||A|_{\mathrm{sym}}\bigr\|_{\infty}+\bigl\||B|_{\mathrm{sym}}\bigr\|_{\infty}
		\qquad
		\left(\text{i.e. }\ \frac{3}{\sqrt2}>\frac{7}{2\sqrt5}+\frac12\right).
		\]
		If the inequality in Theorem~\ref{thm:non-exist-sym} held for some unitaries $U,V$, then taking operator
		norms would yield
		\[
		\bigl\||A+B|_{\mathrm{sym}}\bigr\|_{\infty}
		\le
		\bigl\|U|A|_{\mathrm{sym}}U^*+V|B|_{\mathrm{sym}}V^*\bigr\|_{\infty}
		\le
		\bigl\||A|_{\mathrm{sym}}\bigr\|_{\infty}+\bigl\||B|_{\mathrm{sym}}\bigr\|_{\infty},
		\]
		a contradiction. 
	\end{proof}
\section{A rectangular Thompson's inequality and proof of Theorem~\ref{thm:qsym-thompson-matrix}}
\label{sec:rectangular-thompson}

In this section, we establish a rectangular version of Thompson's inequality, which will be the main tool in the proof of Theorem~\ref{thm:qsym-thompson-matrix}.
The key idea is to reduce the rectangular setting to the square one by means of the polar decomposition: for a rectangular matrix $X\in\mathbb M_{m,n}$,
one can write $X=W|X|$ with a partial isometry $W$ whose initial projection coincides with the support projection of $|X|$.
We then use the rectangular Thompson's inequality to prove Theorem~\ref{thm:qsym-thompson-matrix}.

\begin{lem}\label{lem:svd-support}
	Let $A\in\mathbb M_{m,n}$. Then there exists a partial isometry $W\in\mathbb M_{m,n}$
	such that
	\[
A=W|A|,\qquad |A|=(A^*A)^{1/2},
	\]
	and
	\[
	W^*W=\mathrm{supp}(|A|),\qquad  	W^*W\le I_n, WW^*\le I_m,\qquad (W^*W)\,|A|=|A|,
	\]
	where $\mathrm{supp}(\cdot)$  means  the support projection.
\end{lem}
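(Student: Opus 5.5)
We take the standard route through the singular value decomposition and read off $W$ from it. Write $A=P\Sigma Q^*$, where $P\in\mathbb M_m$ and $Q\in\mathbb M_n$ are unitary and $\Sigma\in\mathbb M_{m,n}$ is the rectangular diagonal matrix with diagonal entries $\sigma_1\ge\cdots\ge\sigma_r>0$ followed by zeros, with $r=\rank A\le\min(m,n)$. Then
\[
A^*A=Q(\Sigma^*\Sigma)Q^*,
\]
where $\Sigma^*\Sigma=\diag(\sigma_1^2,\dots,\sigma_r^2,0,\dots,0)\in\mathbb M_n$. By uniqueness of the positive square root,
\[
|A|=Q\,\diag(\sigma_1,\dots,\sigma_r,0,\dots,0)\,Q^*.
\]

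Let $E_r\in\mathbb M_{m,n}$ be the rectangular matrix with ones in positions $(i,i)$ for $i\le r$ and zeros elsewhere, and set $W:=PE_rQ^*$. Since $\Sigma=E_r\,\diag(\sigma_1,\dots,\sigma_r,0,\dots,0)$, we get $W|A|=P\Sigma Q^*=A$.

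Next we compute the projections. We have
\[
W^*W=Q(E_r^*E_r)Q^*=Q\,(I_r\oplus 0_{n-r})\,Q^*,
\]
which is the orthogonal projection onto the span of the first $r$ columns of $Q$. This span is exactly the range of $|A|$, so $W^*W=\supp(|A|)$, and in particular $W^*W\le I_n$. Similarly,
\[
WW^*=P\,(I_r\oplus 0_{m-r})\,P^*\le I_m.
\]
Hence $W$ is a partial isometry. Finally, $(W^*W)|A|=|A|$ holds because a support projection acts as the identity on the range of the operator. One can also check it directly: $(I_r\oplus0)$ times $\diag(\sigma_1,\dots,\sigma_r,0,\dots,0)$ equals the diagonal matrix itself.

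There is no real obstacle here. The only point needing care is the rectangular bookkeeping: when $m\ne n$ the middle factor of $W$ must be the $m\times n$ truncated identity $E_r$, not a square unitary. Taking $W=PQ^*$ would not even make sense unless $m=n$, and in the square case it would still violate $W^*W=\supp(|A|)$ whenever $r<n$.
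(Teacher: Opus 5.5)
Your proof is correct and is essentially the paper's own argument: you take the singular value decomposition $A=P\Sigma Q^*$ and set $W=PE_rQ^*$, with the truncated rectangular identity as the middle factor. You then read off $W|A|=A$, $W^*W=Q(I_r\oplus 0)Q^*=\supp(|A|)$ and $WW^*\le I_m$ directly, exactly as the paper does.
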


\begin{proof}
	Take a singular value decomposition $A=U\Sigma V^*$, where $U\in\mathbb M_{m}$ and $V\in\mathbb M_{n}$ are unitary and
	\[
	\Sigma=
	\begin{pmatrix}
		D & 0\\
		0 & 0
	\end{pmatrix}\in\mathbb M_{m,n},
	\qquad
	D=\diag(s_1,\dots,s_r),\ \ s_1\ge\cdots\ge s_r>0,\ \ r=\rank(A).
	\]
	Then
	\[
A^*A = V(\Sigma^*\Sigma)V^*,\qquad 
	\Sigma^*\Sigma=
	\begin{pmatrix}
		D^2 & 0\\
		0 & 0
	\end{pmatrix},
	\]
	hence
	\[
	|A|=(A^*A)^{1/2}
	=
	V
	\begin{pmatrix}
		D & 0\\
		0 & 0
	\end{pmatrix}
	V^*.
	\]
	Define
	$
	W:=U
	\begin{pmatrix}
		I_r & 0\\
		0 & 0
	\end{pmatrix}
	V^* \in\mathbb M_{m,n}.
	$
	Clearly, $W^*W\le I_n, WW^*\le I_m$ and
	\[
	W|A|
	=
	U 	\begin{pmatrix}
		I_r & 0\\
		0 & 0
	\end{pmatrix}V^* \cdot V
	\begin{pmatrix}
		D & 0\\
		0 & 0
	\end{pmatrix}
	V^*
	=
	U
	\begin{pmatrix}
		D & 0\\
		0 & 0
	\end{pmatrix}
	V^*
	=
	U\Sigma V^*
	=
	A.
	\]
	Moreover,
\[
(W^*W)\,|A|
=
\Bigl(V\begin{pmatrix}
	I_r & 0\\
	0 & 0
\end{pmatrix}V^*\Bigr)
\Bigl(V\begin{pmatrix}
	D & 0\\
	0 & 0
\end{pmatrix}V^*\Bigr)
=
V\begin{pmatrix}
	D & 0\\
	0 & 0
\end{pmatrix}V^*
=
|A|.
\]
	That is, $W^*W$ is exactly the support projection of $|A|$.
\end{proof}

Lemma~\ref{lem:svd-support} allows us to choose the partial isometry in the polar decomposition so that it acts as the identity on the range of $|A|$.
This normalization is exactly what we need to turn $|A+B|$ into the absolute value of a \emph{square} matrix, to which the classical Thompson inequality applies.

	\begin{prop}\label{prop:Thompson-rect}
	Let $A,B\in\mathbb M_{m,n}$. Then there exist two unitaries $U,V$ such that
	\[
	|A+B|\le U|A|U^*+V|B|V^*.
	\]
\end{prop}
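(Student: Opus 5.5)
The plan is to reduce Proposition~\ref{prop:Thompson-rect} to the square case, Theorem~\ref{thm:thompson}, by means of Lemma~\ref{lem:svd-support}. The guiding observation is that $|X|=(X^*X)^{1/2}$ depends only on $X^*X$. So if $Y\in\mathbb M_n$ satisfies $Y^*Y=X^*X$, then $|Y|=|X|$.

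First apply Lemma~\ref{lem:svd-support} to $C:=A+B\in\mathbb M_{m,n}$. This gives a partial isometry $W\in\mathbb M_{m,n}$ with $C=W|C|$, $W^*W=\supp(|C|)$ and $(W^*W)|C|=|C|$. Next set
\[
A':=W^*A,\qquad B':=W^*B\in\mathbb M_n.
\]
Then
\[
A'+B'=W^*C=W^*W|C|=|C|.
\]
It follows that $|A'+B'|=|C|=|A+B|$, because $|C|\ge0$ and so its modulus is itself.

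Now apply Theorem~\ref{thm:thompson} to the square matrices $A',B'$. This yields unitaries $U_0,V_0\in\mathbb M_n$ with
\[
|A+B|=|A'+B'|\le U_0|A'|U_0^*+V_0|B'|V_0^*.
\]

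It remains to compare $|A'|$ with $|A|$, and $|B'|$ with $|B|$. Since $WW^*\le I_m$, we have
\[
A'^*A'=A^*WW^*A\le A^*A.
\]
The square root is operator monotone, so $|A'|\le|A|$. The same argument gives $|B'|\le|B|$. Conjugating by $U_0$ and $V_0$ preserves the order, so we may take $U=U_0$ and $V=V_0$, which gives the claim. If operator monotonicity were unavailable, Weyl monotonicity would still give $|A'|\le W_1|A|W_1^*$ for some unitary $W_1$ (as in the discussion in the introduction), and the unitaries could be absorbed into $U$ and $V$.

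The only delicate step is the identity $W^*C=|C|$. It requires the specific normalization $W^*W\,|C|=|C|$ from Lemma~\ref{lem:svd-support}. This identity is exactly what turns $|A+B|$ into the modulus of a square matrix. Everything else is routine.
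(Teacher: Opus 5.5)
Your proposal is correct and matches the paper's proof essentially step for step. Both use the normalized polar factor $W$ from Lemma~\ref{lem:svd-support} to get $W^*(A+B)=|A+B|$, apply Thompson's inequality to the square matrices $W^*A$ and $W^*B$, and conclude via $A^*WW^*A\le A^*A$ together with the operator monotonicity of $t\mapsto t^{1/2}$.
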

\begin{proof}
	By using Lemma~\ref{lem:svd-support},	 there exists a partial isometry $W$ such that $A+B=W\,|A+B|$,
	$W^*W$ is the support projection of $|A+B|$ and $W^*W\,|A+B|=|A+B|$. Therefore
	\[
	W^*(A+B)=W^*\left( W|A+B|\right) =|A+B|.
	\]
	In particular,
	\begin{equation}\label{eq:polar-decomposition-equality}
		|A+B|=|W^*(A+B)|=|W^*A+W^*B|.
	\end{equation}
	Now $W^*A$ and $W^*B$ are $n\times n$ (square) matrices. Applying Thompson's inequality (Theorem~\ref{thm:thompson}) to the square matrices
	$W^*A$ and $W^*B$, there exist unitaries $U,V\in\mathbb M_n$ such that
	\begin{equation}\label{eq:Thompson-square-applied}
		|W^*A+W^*B|
		\le
		U\,|W^*A|\,U^*+V\,|W^*B|\,V^*.
	\end{equation}
	Since the left-hand side equals $|A+B|$, it remains to compare $|W^*A|$ with $|A|$.
	
	Since $WW^*\le I_m$, we have
	\[
	|W^*A|^2=(W^*A)^*(W^*A)=A^*WW^*A\le A^*A=|A|^2.
	\]
	By the operator monotonicity of $t\mapsto t^{1/2}$ on $[0,\infty)$, this implies $|W^*A|\le |A|$.
	Similarly, $|W^*B|\le |B|$. Conjugating preserves the order, hence
	\[
	U|W^*A|U^*\le U|A|U^*,\qquad V|W^*B|V^*\le V|B|V^*.
	\]
	Substituting into \eqref{eq:Thompson-square-applied} and combining with \eqref{eq:polar-decomposition-equality} yields our desired result.
\end{proof}

We now prove Theorem~\ref{thm:qsym-thompson-matrix},
it follows from an application of Proposition~\ref{prop:Thompson-rect}
to a suitable linear embedding that converts the quadratic symmetrization into a rectangular modulus.

\begin{proof}[Proof of Theorem~\ref{thm:qsym-thompson-matrix}]
 Define the linear map $\Gamma:\mathbb M_n\to \mathbb M_{2n,n}$ by
	\[
	\Gamma(T):=\frac{1}{\sqrt2}\begin{pmatrix}T\\ T^*\end{pmatrix}.
	\]
	Then
	\[
\Gamma(A+B)=\Gamma(A)+\Gamma(B)\quad \text{ and }\quad	|\Gamma(T)|=\,|T|_{\mathrm{qsym}}.
	\]
 Applying rectangular Thompson's inequality (Proposition~\ref{prop:Thompson-rect})
	yields unitaries $U,V\in\Mn$ such that
\begin{align*}
		|\Gamma(A+B)|&=|\Gamma(A)+\Gamma(B)|\\
		&\le U|\Gamma(A)|U^*+V|\Gamma(B)|V^*.
\end{align*}
That is,
	\[
	|A+B|_{\mathrm{qsym}}\le U|A|_{\mathrm{qsym}}U^*+V|B|_{\mathrm{qsym}}V^*.
	\]
\end{proof}
We now turn to the equality case.
The argument is most transparent after embedding the rectangular problem into a square one, where Thompson's equality theorem is available.
\begin{proof}[Proof of Theorem~\ref{thm:equality-case-matrix}]
	Define
	\[
	A_0:=\Gamma(A)\in \mathbb{M}_{2n,n},\qquad B_0:=\Gamma(B)\in \mathbb{M}_{2n,n}.
	\]
Then
	\[
	|A_0|=\qsym{A},\qquad |B_0|=\qsym{B},\qquad |A_0+B_0|=\qsym{A+B}.
	\]
	Hence the assumption
	\[
	\qsym{A+B}=U\qsym{A}U^*+V\qsym{B}V^*
	\]
	is equivalent to
	\begin{equation}\label{eq:rect-orbit-eq}
		|A_0+B_0|=U|A_0|U^*+V|B_0|V^* .
	\end{equation}
	
	\smallskip
	\noindent\emph{Step 1: Embed the rectangular problem into a square one.}
	Consider the block embedding of $\mathbb{M}_{2n,n}$ into $\mathbb{M}_{3n}$ given by
	\[
	\widetilde A:=\begin{pmatrix}0&0\\ A_0&0\end{pmatrix},\qquad
	\widetilde B:=\begin{pmatrix}0&0\\ B_0&0\end{pmatrix}
	\quad\text{in } \mathbb{M}_{3n},
	\]
	with respect to the decomposition $\mathbb{C}^{3n}\cong \mathbb{C}^{n}\oplus\mathbb{C}^{2n}$.
	A direct computation shows
	\[
	|\widetilde A|=\diag(|A_0|,0),\qquad
	|\widetilde B|=\diag(|B_0|,0),\qquad
	|\widetilde A+\widetilde B|=\diag(|A_0+B_0|,0).
	\]
	Let
	\[
	\widetilde U:=\diag(U,I_{2n}),\qquad \widetilde V:=\diag(V,I_{2n})
	\quad\text{in }\mathbb{M}_{3n}.
	\]
	Then \eqref{eq:rect-orbit-eq} implies
	\[
	|\widetilde A+\widetilde B|
	=\diag(|A_0+B_0|,0)
	=\diag(U|A_0|U^*+V|B_0|V^*,0)
	=\widetilde U|\widetilde A|\widetilde U^*+\widetilde V|\widetilde B|\widetilde V^* .
	\]
	
	\smallskip
	\noindent\emph{Step 2: Apply Thompson's equality theorem in $\mathbb{M}_{3n}$.}
	By Theorem~\ref{thm:thompson-2}, there exists a unitary $\widetilde W\in \mathbb{M}_{3n}$ such that
	\[
	\widetilde A=\widetilde W|\widetilde A|,\qquad \widetilde B=\widetilde W|\widetilde B|.
	\]
	Write $\widetilde W$ in block form (with respect to $\mathbb C^{n}\oplus\mathbb C^{2n}$):
	\[
	\widetilde W=\begin{pmatrix}W_{11}&W_{12}\\ W_{21}&W_{22}\end{pmatrix}.
	\]
	Since $|\widetilde A|=\diag(|A_0|,0)$, the identity $\widetilde A=\widetilde W|\widetilde A|$ gives
	\[
	\begin{pmatrix}0&0\\ A_0&0\end{pmatrix}
	=
	\begin{pmatrix}W_{11}&W_{12}\\ W_{21}&W_{22}\end{pmatrix}
	\begin{pmatrix}|A_0|&0\\ 0&0\end{pmatrix}
	=
	\begin{pmatrix}W_{11}|A_0|&0\\ W_{21}|A_0|&0\end{pmatrix}.
	\]
	Hence
	\[
	A_0=W_{21}|A_0|
	\qquad\text{and}\qquad
	W_{11}|A_0|=0.
	\]
	Similarly, from $\widetilde B=\widetilde W|\widetilde B|$ we obtain
	\[
	B_0=W_{21}|B_0|
	\qquad\text{and}\qquad
	W_{11}|B_0|=0.
	\]
	
	\smallskip
	\noindent\emph{Step 3: Construct the common partial isometry.}
	Let
	\[
	P:=\supp(|A_0|+|B_0|)\in\mathbb{M}_n.
	\]
	Then $P|A_0|=|A_0|$ and $P|B_0|=|B_0|$, so if we define
	\[
	W:=W_{21}P\in\mathbb{M}_{2n,n},
	\]
	then
	\[
	W|A_0|=W_{21}P|A_0|=W_{21}|A_0|=A_0,
	\qquad
	W|B_0|=W_{21}P|B_0|=W_{21}|B_0|=B_0.
	\]
	Moreover, since $W_{11}|A_0|=0$ and $W_{11}|B_0|=0$, we have
	\[
	W_{11}(|A_0|+|B_0|)=0,
	\]
	hence $W_{11}P=0$. Therefore
	\[
	\widetilde W\,\diag(P,0)
	=
	\begin{pmatrix}W_{11}P&0\\ W_{21}P&0\end{pmatrix}
	=
	\begin{pmatrix}0&0\\ W&0\end{pmatrix}.
	\]
	Because $\widetilde W$ is unitary, $\widetilde W\,\diag(P,0)$ is a partial isometry with initial projection $\diag(P,0)$.
	Taking adjoints and multiplying yields
	\[
	\begin{pmatrix}0& W^*\\ 0&0\end{pmatrix}
	\begin{pmatrix}0&0\\ W&0\end{pmatrix}
	=
	\diag(P,0),
	\]
	and hence
	\[
	W^*W=P.
	\]
	So $W$ is a partial isometry in $\mathbb{M}_{2n,n}$.
	
	Finally, since $A_0=\Gamma(A)$, $B_0=\Gamma(B)$, $|A_0|=\qsym{A}$, and $|B_0|=\qsym{B}$, we conclude that
	\[
	\Gamma(A)=W\,\qsym{A},
	\qquad
	\Gamma(B)=W\,\qsym{B}.
	\]
	Equivalently, the polar decompositions of $\Gamma(A)$ and $\Gamma(B)$ share the same partial isometry factor $W$.
\end{proof}
\section{Infinite-dimensional extensions and equality cases}
\label{sec:infinite-dimensional}
In this section, we discuss the infinite-dimensional case of the triangle inequality for the quadratic symmetric modulus.

We begin by recording a rectangular version of Theorem~\ref{thm:AAP} for operators in $\B(\Hh,\Kk)$.
	\begin{prop}\label{prop:rect-AAP}
		Let $\Hh,\Kk$ be complex Hilbert spaces and let $A,B\in\B(\Hh,\Kk)$.
		Then there exist isometries $U,V\in\B(\Hh)$ such that
		\[
		\absop{A+B}\ \le\ U\,\absop{A}\,U^* \;+\; V\,\absop{B}\,V^* .
		\]
	\end{prop}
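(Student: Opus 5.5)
We follow the proof of Proposition~\ref{prop:Thompson-rect}: reduce the rectangular case to the square case in $\B(\Hh)$ by a polar decomposition, and then invoke Theorem~\ref{thm:AAP}.

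First, take the polar decomposition $A+B=W\absop{A+B}$ with $W\in\B(\Hh,\Kk)$ a partial isometry. Its initial projection $W^*W$ is the projection onto $\overline{\Ran}\,\absop{A+B}$, so $W^*W\absop{A+B}=\absop{A+B}$. This is the operator version of Lemma~\ref{lem:svd-support}; the standard polar decomposition on Hilbert spaces supplies it, since $\|\absop{A+B}x\|=\|(A+B)x\|$ defines $W$ isometrically on $\overline{\Ran}\,\absop{A+B}$ and we set $W=0$ on the orthogonal complement. Hence
\[
W^*(A+B)=W^*W\absop{A+B}=\absop{A+B},
\]
and in particular
\[
\absop{A+B}=\absop{W^*A+W^*B}.
\]
Here $W^*A$ and $W^*B$ lie in $\B(\Hh)$.

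Next, apply Theorem~\ref{thm:AAP} to $W^*A,\,W^*B\in\B(\Hh)$. This gives isometries $U,V\in\B(\Hh)$ with
\[
\absop{A+B}\le U\absop{W^*A}U^*+V\absop{W^*B}V^*.
\]
Since $WW^*\le I_{\Kk}$,
\[
\absop{W^*A}^2=A^*WW^*A\le A^*A=\absop{A}^2,
\]
and operator monotonicity of $t\mapsto t^{1/2}$ gives $\absop{W^*A}\le\absop{A}$. Likewise $\absop{W^*B}\le\absop{B}$. Congruence by $U$ and $V$ preserves order, so
\[
U\absop{W^*A}U^*\le U\absop{A}U^*
\qquad\text{and}\qquad
V\absop{W^*B}V^*\le V\absop{B}V^*.
\]
Combining these with the previous display yields the claim.

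Everything is routine once Theorem~\ref{thm:AAP} is available. The only point needing care is the first step: the partial isometry must satisfy $W^*W\absop{A+B}=\absop{A+B}$ in infinite dimensions. This holds because the initial space of $W$ is exactly $\overline{\Ran}\,\absop{A+B}$, so no surjectivity or compactness issues arise.
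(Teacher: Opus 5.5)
Your proposal is correct and matches the paper's proof step for step. Both use the polar decomposition of $A+B$ to write $|A+B|=|W^*A+W^*B|$, apply the Akemann--Anderson--Pedersen theorem in $\B(\Hh)$, and then use $WW^*\le I_{\Kk}$ together with operator monotonicity of $t\mapsto t^{1/2}$ to get $|W^*A|\le|A|$ and $|W^*B|\le|B|$.
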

	
	\begin{proof}
		Let $A+B=W\,\absop{A+B}$ be the polar decomposition, where $W\in\B(\Hh,\Kk)$ is a partial isometry.
	Recall that for an operator $S$ we write $\ran(S)$ for its range.
	For a positive operator $S$ we denote by $\supp(S)$ its support projection,
	i.e.\ the orthogonal projection onto $\overline{\ran(S)}$.
		Then
		\[
		W^*(A+B)=W^*W\,\absop{A+B}=\absop{A+B},
		\]
		since $W^*W=\supp(\absop{A+B})$.
		Hence
		\[
		\absop{A+B}=\absop{W^*(A+B)}=\absop{W^*A+W^*B}.
		\]
		Now $W^*A,W^*B\in\B(\Hh)$ are operators on $\Hh$, so by  Akemann--Anderson--Pedersen's inequality (Theorem~\ref{thm:AAP})
		there exist isometries $U,V\in\B(\Hh)$ such that
		\[
		\absop{W^*A+W^*B}\ \le\ U\,\absop{W^*A}\,U^* \;+\; V\,\absop{W^*B}\,V^* .
		\]
		Finally, since $WW^*\le \Id_{\Kk}$ we have
		\[
		\absop{W^*A}^2=(W^*A)^*(W^*A)=A^*WW^*A\le A^*A=\absop{A}^2,
		\]
		hence $\absop{W^*A}\le\absop{A}$ by operator monotonicity of $t\mapsto t^{1/2}$.
		Similarly $\absop{W^*B}\le\absop{B}$.
		Conjugation preserves order, so
		$U\absop{W^*A}U^*\le U\absop{A}U^*$ and $V\absop{W^*B}V^*\le V\absop{B}V^*$.
		Combining the last three displays yields the desired inequality.
	\end{proof}
	Now, we give a proof of Theorem~\ref{thm:qsym-thompson-operator}.
\begin{proof}[Proof of Theorem~\ref{thm:qsym-thompson-operator}]
	Define the linear lifting
	\[
	\Gamma:\B(\Hh)\to\B(\Hh,\Hh\oplus\Hh),\qquad
	\Gamma(T):=\frac{1}{\sqrt2}\binom{T}{T^*}.
	\]
		Then
	\[
	\Gamma(A+B)=\Gamma(A)+\Gamma(B)\quad \text{ and }\quad	|\Gamma(T)|=\,|T|_{\mathrm{qsym}}.
	\]
Applying Proposition~\ref{prop:rect-AAP}
yields isometries $U,V\in\B(\Hh)$ such that
	\begin{align*}
		|\Gamma(A+B)|&=|\Gamma(A)+\Gamma(B)|\\
		&\le U|\Gamma(A)|U^*+V|\Gamma(B)|V^*.
	\end{align*}
	That is,
	\[
	|A+B|_{\mathrm{qsym}}\le U|A|_{\mathrm{qsym}}U^*+V|B|_{\mathrm{qsym}}V^*.
	\]
	\end{proof}
	
	\begin{rem}
		In finite dimension, every isometry is unitary. Hence Theorem~\ref{thm:qsym-thompson-operator}
		implies Theorem~\ref{thm:qsym-thompson-matrix}. 
	\end{rem}
	
Before proving Theorem~\ref{thm:main-equality}, we need the standard two-space extension of Theorem~\ref{thm:AH-classical} to operators between Hilbert spaces.
	
	\begin{prop}\label{prop:AH-two-space}
		Let $\Hh,\Kk$ be complex Hilbert spaces and let $A,B\in\B(\Hh,\Kk)$.
		If
		\[
		\absop{A+B}=\absop{A}+\absop{B},
		\]
		then there exists a partial isometry $W\in\B(\Hh,\Kk)$ such that
		\[
		A=W\absop{A},\qquad B=W\absop{B}.
		\]
	\end{prop}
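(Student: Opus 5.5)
We reduce Proposition~\ref{prop:AH-two-space} to the one-space result, Theorem~\ref{thm:AH-classical}, by the same embedding used in the proof of Theorem~\ref{thm:equality-case-matrix}. Set $\mathcal L:=\Hh\oplus\Kk$ and define $\widetilde A,\widetilde B\in\B(\mathcal L)$ by
\[
\widetilde A:=\begin{pmatrix}0&0\\ A&0\end{pmatrix},\qquad
\widetilde B:=\begin{pmatrix}0&0\\ B&0\end{pmatrix}.
\]
Since $\widetilde A^*\widetilde A=\diag(A^*A,0)$, we get $\absop{\widetilde A}=\diag(\absop{A},0)$. In the same way, $\absop{\widetilde B}=\diag(\absop{B},0)$ and $\absop{\widetilde A+\widetilde B}=\diag(\absop{A+B},0)$. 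The hypothesis $\absop{A+B}=\absop{A}+\absop{B}$ therefore gives $\absop{\widetilde A+\widetilde B}=\absop{\widetilde A}+\absop{\widetilde B}$ in $\B(\mathcal L)$. Theorem~\ref{thm:AH-classical} then supplies a partial isometry $\widetilde W\in\B(\mathcal L)$ with $\widetilde A=\widetilde W\absop{\widetilde A}$ and $\widetilde B=\widetilde W\absop{\widetilde B}$.

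Next we read off the block entries. Write $\widetilde W=(W_{ij})$ relative to $\Hh\oplus\Kk$, so that $W_{21}\in\B(\Hh,\Kk)$ and $W_{11}\in\B(\Hh)$. Comparing blocks in $\widetilde A=\widetilde W\absop{\widetilde A}$ gives
\[
A=W_{21}\absop{A},\qquad W_{11}\absop{A}=0,
\]
and likewise $B=W_{21}\absop{B}$ and $W_{11}\absop{B}=0$.

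The remaining point, which is the only delicate one, is that $W_{21}$ need not be a partial isometry, since $\widetilde W$ is only a partial isometry and not a unitary. To fix this, let $P:=\supp(\absop{A}+\absop{B})$, the projection onto $\overline{\ran(\absop{A})+\ran(\absop{B})}$, and set $W:=W_{21}P$. Then $W\absop{A}=A$ and $W\absop{B}=B$, because $P\absop{A}=\absop{A}$ and $P\absop{B}=\absop{B}$. It remains to show that $W$ is a partial isometry, and for this we show that $\widetilde W$ is isometric on $\ran\diag(P,0)$.

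Let $Q:=\widetilde W^*\widetilde W$ be the initial projection of $\widetilde W$. Every vector $\diag(\absop{A},0)\xi$ satisfies
\[
\bigl\|\widetilde W\diag(\absop{A},0)\xi\bigr\|=\bigl\|\widetilde A\xi\bigr\|=\bigl\|A\xi_1\bigr\|=\bigl\|\absop{A}\xi_1\bigr\|=\bigl\|\diag(\absop{A},0)\xi\bigr\|.
\]
So $\widetilde W$ preserves norms on $\ran\diag(\absop{A},0)$. For a partial isometry, $\|\widetilde Wx\|=\|x\|$ forces $x\in\ran Q$, so this range lies in $\ran Q$. The same argument applies to $\absop{B}$. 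Because $\ran Q$ is a closed subspace, it contains the closed span of both ranges, which is exactly $\ran\diag(P,0)$.

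Hence $\widetilde W\diag(P,0)$ is a partial isometry with initial projection $\diag(P,0)$. From $W_{11}\absop{A}=W_{11}\absop{B}=0$ we get $W_{11}P=0$, so
\[
\widetilde W\diag(P,0)=\begin{pmatrix}0&0\\ W&0\end{pmatrix}.
\]
Computing the initial projection from this block form yields $W^*W=P$. Thus $W\in\B(\Hh,\Kk)$ is a partial isometry with $A=W\absop{A}$ and $B=W\absop{B}$, as required.
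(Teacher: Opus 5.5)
Your proof is correct and follows the paper's argument essentially step for step: the same embedding into $\B(\Hh\oplus\Kk)$, the same block comparison giving $A=W_{21}\absop{A}$ and $W_{11}\absop{A}=0$, and the same cut-down $W=W_{21}P$ with $P=\supp(\absop{A}+\absop{B})$. The one place you go further is that you justify $\diag(P,0)\le\widetilde W^*\widetilde W$, using the norm-preservation of $\widetilde W$ on $\ran\diag(\absop{A},0)$ and $\ran\diag(\absop{B},0)$; the paper simply asserts this step.
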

	
	\begin{proof}
		Consider $\Hh\oplus\Kk$ and define block operators
		\[
		\widetilde A:=\begin{pmatrix}0&0\\ A&0\end{pmatrix},
		\qquad
		\widetilde B:=\begin{pmatrix}0&0\\ B&0\end{pmatrix}
		\quad\text{in }\B(\Hh\oplus\Kk).
		\]
		Then $\absop{\widetilde A}=\diag(\absop{A},0)$, $\absop{\widetilde B}=\diag(\absop{B},0)$, and
		$\absop{\widetilde A+\widetilde B}=\diag(\absop{A+B},0)$.
		Hence $\absop{A+B}=\absop{A}+\absop{B}$ is equivalent to
		$\absop{\widetilde A+\widetilde B}=\absop{\widetilde A}+\absop{\widetilde B}$.
		Applying Theorem~\ref{thm:AH-classical} on $\Hh\oplus\Kk$ yields a partial isometry
		$\widetilde U\in\B(\Hh\oplus\Kk)$ such that
		\[
		\widetilde A=\widetilde U\,\absop{\widetilde A},
		\qquad
		\widetilde B=\widetilde U\,\absop{\widetilde B}.
		\]
		Write $\widetilde U=\begin{pmatrix}U_{11}&U_{12}\\ U_{21}&U_{22}\end{pmatrix}$.
		From
		\[
		\widetilde A=\widetilde U\,\diag(\absop{A},0)
		\quad\text{and}\quad
		\widetilde B=\widetilde U\,\diag(\absop{B},0)
		\]
		we obtain
		\[
		U_{21}\absop{A}=A,\qquad U_{21}\absop{B}=B,
		\qquad
		U_{11}\absop{A}=0,\qquad U_{11}\absop{B}=0.
		\]
		Let $P$ be the support projection of $\absop{A}+\absop{B}$ on $\Hh$ (equivalently, the orthogonal projection onto
		$\overline{\ran(\absop{A}+\absop{B})}$). Then $P\absop{A}=\absop{A}$ and $P\absop{B}=\absop{B}$, and moreover
		$U_{11}P=0$ since $U_{11}$ vanishes on $\ran(\absop{A})$ and $\ran(\absop{B})$.
		Set $W:=U_{21}P\in\B(\Hh,\Kk)$. Then
		\[
		A=W\absop{A},\qquad B=W\absop{B}.
		\]
		Finally, let $\widetilde P:=\diag(P,0)=\supp\bigl(|\widetilde A|+|\widetilde B|\bigr)$.
		Since $\widetilde A=\widetilde U|\widetilde A|$ and $\widetilde B=\widetilde U|\widetilde B|$, we have
		$\supp(|\widetilde A|)\le \widetilde U^*\widetilde U$ and $\supp(|\widetilde B|)\le \widetilde U^*\widetilde U$,
		hence $\widetilde P\le \widetilde U^{*}\widetilde U$. Therefore $\widetilde U\widetilde P$ is a partial isometry
		with initial projection $\widetilde P$.
		Since
		\[
		\widetilde U\widetilde P=\begin{pmatrix}U_{11}P&0\\ U_{21}P&0\end{pmatrix}
		=\begin{pmatrix}0&0\\ W&0\end{pmatrix},
		\]
		it follows that $W^{*}W=P$, and therefore $W$ is a partial isometry.
	\end{proof}
	
Next, we prove Theorem~\ref{thm:main-equality}.

\begin{proof}[Proof of Theorem~\ref{thm:main-equality}]
	Clearly,
	\[
	\absop{\Gamma(A)}=\qsym{A},\qquad
	\absop{\Gamma(B)}=\qsym{B},\qquad
	\absop{\Gamma(A+B)}=\qsym{A+B},
	\]
	and
	\[
	\Gamma(A+B)=\Gamma(A)+\Gamma(B).
	\]
Thus the assumption
\[
\qsym{A+B}=\qsym{A}+\qsym{B}
\]
is equivalent to
\[
\absop{\Gamma(A)+\Gamma(B)}
=
\absop{\Gamma(A)}+\absop{\Gamma(B)}.
\]
	Applying Proposition~\ref{prop:AH-two-space} to the operators
$
	\Gamma(A),\Gamma(B)\in\B(\Hh,\Hh\oplus\Hh),
$
	we obtain a partial isometry $W\in\B(\Hh,\Hh\oplus\Hh)$ such that
	\[
	\Gamma(A)=W\,\absop{\Gamma(A)}=W\,\qsym{A},
	\qquad
	\Gamma(B)=W\,\absop{\Gamma(B)}=W\,\qsym{B}.
	\]
\end{proof}

\begin{rem}[Block form]
	Write $W=\binom{W_1}{W_2}$ with $W_1,W_2\in\B(\Hh)$.
	Then $\Gamma(A)=W\qsym{A}$ and $\Gamma(B)=W\qsym{B}$ are equivalent to
	\[
	A=\sqrt2\,W_1\,\qsym{A},\quad A^*=\sqrt2\,W_2\,\qsym{A},
	\qquad
	B=\sqrt2\,W_1\,\qsym{B},\quad B^*=\sqrt2\,W_2\,\qsym{B}.
	\]
	Thus the equality $\qsym{A+B}=\qsym{A}+\qsym{B}$ forces the \emph{same} pair $(W_1,W_2)$
	to control both $(A,A^*)$ and $(B,B^*)$ through their quadratic symmetric moduli.
\end{rem}

\begin{rem}[Why one should not expect $A=U\qsym{A}$ in general]
	Even for a single operator $A$, a factorization $A=U\qsym{A}$ with $U\in\B(\Hh)$ a partial isometry
	is generally impossible unless $A$ is very special.
	For example, if $\Hh=\mathbb{C}^n$ and $A=U\qsym{A}$, then
	\[
	A^*A=\qsym{A}\,U^*U\,\qsym{A}\le \qsym{A}^2=\frac{A^*A+AA^*}{2},
	\]
	so $A^*A\le AA^*$. Taking traces forces $A^*A=AA^*$, hence $A$ is normal.
	Therefore non-normal matrices (e.g.\ a Jordan block) cannot satisfy $A=U\qsym{A}$.
	This explains why the natural equality theorem for the  quadratic symmetric modulus is formulated for the lifted operators $\Gamma(A)$.
\end{rem}
\section{Concave-function subadditivity for the quadratic symmetric modulus}
\label{sec:subadditivity}

We begin by proving the concave-function subadditivity theorem, namely Theorem~\ref{thm:intro-main-concave}.
\begin{proof}[Proof of Theorem~\ref{thm:intro-main-concave}]
	Since $f:[0,\infty)\to[0,\infty)$ is concave, it is nondecreasing on $[0,\infty)$.
	By Theorem~\ref{thm:qsym-thompson-matrix}, there exist unitaries
	$U_0,V_0\in\mathbb M_n$ such that
	\[
	\qsym{A+B}\le U_0\,\qsym{A}\,U_0^*+V_0\,\qsym{B}\,V_0^*.
	\]
	Set
	\[
	X:=\qsym{A+B},
	\qquad
	Y:=U_0\,\qsym{A}\,U_0^*+V_0\,\qsym{B}\,V_0^*.
	\]
	Then $0\le X\le Y$, and hence
	\[
	\lambda_j(X)\le \lambda_j(Y),\qquad 1\le j\le n.
	\]
	Since $f$ is nondecreasing,
	\[
	\lambda_j\bigl(f(X)\bigr)=f\bigl(\lambda_j(X)\bigr)
	\le f\bigl(\lambda_j(Y)\bigr)
	=\lambda_j\bigl(f(Y)\bigr),
	\qquad 1\le j\le n.
	\]
	Therefore there exists a unitary $W\in\mathbb M_n$ such that
	\[
	f(X)\le W\,f(Y)\,W^*.
	\]
	
	Now apply Theorem~\ref{thm:Bourin--Uchiyama} to the positive matrices
	$U_0\,\qsym{A}\,U_0^*$ and $V_0\,\qsym{B}\,V_0^*$.
	Then there exist unitaries $W_1,W_2\in\mathbb M_n$ such that
	\begin{align*}
		f(Y)
		&\le
		W_1\,f\!\bigl(U_0\,\qsym{A}\,U_0^*\bigr)\,W_1^*
		+
		W_2\,f\!\bigl(V_0\,\qsym{B}\,V_0^*\bigr)\,W_2^* \\
		&=
		W_1U_0\,f\bigl(\qsym{A}\bigr)\,U_0^*W_1^*
		+
		W_2V_0\,f\bigl(\qsym{B}\bigr)\,V_0^*W_2^*.
	\end{align*}
	Combining the last two inequalities and setting
	\[
	U:=WW_1U_0,
	\qquad
	V:=WW_2V_0,
	\]
	we obtain
	\[
	f\bigl(\qsym{A+B}\bigr)
	\le
	Uf\bigl(\qsym{A}\bigr)U^*
	+
	Vf\bigl(\qsym{B}\bigr)V^*.
	\]
\end{proof}
Next, we prove Corollary~\ref{cor:triangle-plus}.
\begin{proof}[Proof of Corollary~\ref{cor:triangle-plus}]
	Apply Theorem~\ref{thm:intro-main-concave} to the pair $A-B$ and $B$.
	Then there exist unitaries $W,T\in\mathbb M_n$ such that
	\[
	f\bigl(\qsym{A}\bigr)
	=
	f\bigl(\qsym{(A-B)+B}\bigr)
	\le
	Wf\bigl(\qsym{A-B}\bigr)W^*
	+
	Tf\bigl(\qsym{B}\bigr)T^*.
	\]
	Conjugating by $W^*$, we obtain
	\[
	W^*f\bigl(\qsym{A}\bigr)W
	\le
	f\bigl(\qsym{A-B}\bigr)
	+
	W^*Tf\bigl(\qsym{B}\bigr)T^*W.
	\]
	Hence
	\[
	W^*f\bigl(\qsym{A}\bigr)W
	-
	W^*Tf\bigl(\qsym{B}\bigr)T^*W
	\le
	f\bigl(\qsym{A-B}\bigr).
	\]
	Therefore, setting
	\[
	U:=W^*,\qquad V:=W^*T,
	\]
	we get
	\[
	Uf\bigl(\qsym{A}\bigr)U^*-Vf\bigl(\qsym{B}\bigr)V^*
	\le
	f\bigl(\qsym{A-B}\bigr),
	\]
	as desired.
\end{proof}
\section{Condition-number estimates for symmetric moduli}
\label{sec:condition-number}
First, we construct two $2\times 2$ matrices to illustrate the sharpness of the coefficient of \eqref{eq:Lee}.

\medskip
\noindent\emph{Sharpness of the coefficient of \eqref{eq:Lee}.} To see that the coefficient in \eqref{eq:Lee} is best possible, it is enough to consider the case \(m=2\) and \(n=2\). Write
$
t=\sqrt{\omega},
$
in particular, \(t\ge 1\). Define
\[
A_1=\begin{pmatrix} t&0\\[1mm] 0&t^{-1}\end{pmatrix},
\qquad
R=\frac1{t^2+1}
\begin{pmatrix}
	-2t&t^2-1\\
	1-t^2&-2t
\end{pmatrix},
\qquad
J=\begin{pmatrix}0&1\\1&0\end{pmatrix},
\]
and set
\[
A_2:=J\begin{pmatrix}t^{-1}&0\\0&t\end{pmatrix}R.
\]
Since both \(J\) and \(R\) are orthogonal, the singular values of \(A_2\) are exactly \(t\) and \(t^{-1}\). The same is clearly true for \(A_1\). Hence
\[
\kappa(A_1)=\kappa(A_2)=\frac{t}{t^{-1}}=t^2=\omega.
\]

Now let
$
x=\binom{1}{-t}.
$
Since \(A_1\) is positive definite, we have \(|A_1|=A_1\). Moreover,
\[
|A_2|=R^T\begin{pmatrix}t^{-1}&0\\0&t\end{pmatrix}R.
\]
A direct calculation then shows that
\begin{equation}\label{eq:abs-sum}
\bigl(|A_1|+|A_2|\bigr)x=\frac{4t}{t^2+1}\,x.
\end{equation}

On the other hand,
\[
A_1+A_2=\frac{2}{t^2+1}
\begin{pmatrix}
	t&-t^2\\
	-1&t
\end{pmatrix},
\]
and therefore
\[
(A_1+A_2)^*(A_1+A_2)
=\frac{4}{t^2+1}
\begin{pmatrix}
	1&-t\\
	-t&t^2
\end{pmatrix}.
\]
If we denote
\[
P=\begin{pmatrix}
	1&-t\\
	-t&t^2
\end{pmatrix},
\]
then \(P^2=(t^2+1)P\), and thus
\[
|A_1+A_2|=\frac{2}{t^2+1}P.
\]
Since \(Px=(t^2+1)x\), it follows that
\begin{equation}\label{eq:sum-abs}
	|A_1+A_2|x=2x.
\end{equation}

Combining the  identities \eqref{eq:abs-sum} and \eqref{eq:sum-abs}, we obtain
\[
|A_1+A_2|x
=
\frac{t^2+1}{2t}\,\bigl(|A_1|+|A_2|\bigr)x
=
\frac{\omega+1}{2\sqrt{\omega}}\,
\bigl(|A_1|+|A_2|\bigr)x.
\]
Consequently, if one replaced the coefficient \(\frac{\omega+1}{2\sqrt{\omega}}\) in \eqref{eq:Lee} by any smaller constant \(c\), then
\[
x^*\Bigl(c\bigl(|A_1|+|A_2|\bigr)-|A_1+A_2|\Bigr)x<0,
\]
which is impossible if the corresponding operator inequality were true. This proves that the coefficient \((\omega+1)/(2\sqrt{\omega})\) is sharp.\qed

Before giving a proof of Theorem~\ref{thm:triangle-another}, we need some lemmas.
\begin{lem}\label{lem:cond-Ree}
	Let $X\in\B(\Hh)$ be positive and assume that
	\[
	\alpha I\le X\le \beta I
	\qquad
	(0<\alpha\le \beta).
	\]
	If $K\in\B(\Hh)$ is a contraction, i.e.\ $\|K\|_\infty\le 1$, then
	\[
	\Ree(KX)\le \frac{\alpha+\beta}{2\sqrt{\alpha\beta}}\,X.
	\]
\end{lem}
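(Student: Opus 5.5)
Set $c:=\frac{\alpha+\beta}{2\sqrt{\alpha\beta}}$. We need $cX-\Ree(KX)\ge0$, which is equivalent to showing that for every $h\in\Hh$,
\[
\Ree\langle KXh,h\rangle\le c\,\langle Xh,h\rangle .
\]

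\emph{Reduction.} Since $X$ is positive and invertible, we substitute $h=X^{-1/2}g$ (more conveniently, write $y=X^{1/2}h$). Then $\langle Xh,h\rangle=\|y\|^2$ and
\[
\langle KXh,h\rangle=\langle KX^{1/2}y,\,X^{-1/2}y\rangle .
\]
Because $\|K\|_\infty\le1$, Cauchy--Schwarz gives
\[
\Ree\langle KXh,h\rangle\le \|X^{1/2}y\|\,\|X^{-1/2}y\|=\langle Xy,y\rangle^{1/2}\langle X^{-1}y,y\rangle^{1/2}.
\]
It therefore suffices to prove the Kantorovich inequality
\[
\langle Xy,y\rangle\,\langle X^{-1}y,y\rangle\le c^2\|y\|^4
\qquad\text{for } \alpha I\le X\le\beta I .
\]

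\emph{Kantorovich step.} This is the only substantive part. We use the operator inequality $(X-\alpha I)(\beta I-X)X^{-1}\ge0$, which holds because all three factors are commuting functions of $X$ and the corresponding scalar function is nonnegative on $[\alpha,\beta]$. Expanding gives
\[
X+\alpha\beta X^{-1}\le(\alpha+\beta)I .
\]
Pairing with a unit vector $y$ yields $a+\alpha\beta b\le\alpha+\beta$, where $a=\langle Xy,y\rangle$ and $b=\langle X^{-1}y,y\rangle$. By AM--GM,
\[
2\sqrt{\alpha\beta\,ab}\le a+\alpha\beta b\le \alpha+\beta,
\]
so $ab\le\frac{(\alpha+\beta)^2}{4\alpha\beta}=c^2$. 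Homogeneity extends this to arbitrary $y$.

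Combining the two steps gives $\Ree\langle KXh,h\rangle\le c\|y\|^2=c\langle Xh,h\rangle$ for all $h$, which is the claimed inequality $\Ree(KX)\le cX$.
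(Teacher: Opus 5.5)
Your proof is correct and follows essentially the same route as the paper. The substitution $y=X^{1/2}h$ is the vector form of the paper's congruence $\Ree(KX)=X^{1/2}\Ree(X^{-1/2}KX^{1/2})X^{1/2}$, and it is followed by the same Cauchy--Schwarz bound and the same Kantorovich step via $X+\alpha\beta X^{-1}\le(\alpha+\beta)I$ and AM--GM.
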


\begin{proof}
	Define
$
	Y:=X^{-1/2}KX^{1/2}.
$
	Then
	\[
	X^{1/2}YX^{1/2}=KX,
	\qquad
	X^{1/2}Y^*X^{1/2}=XK^*,
	\]
	and hence
	\[
	\Ree(KX)=X^{1/2}\,\Ree(Y)\,X^{1/2}.
	\]
	Therefore it suffices to show that
	\[
	\Ree(Y)\le \frac{\alpha+\beta}{2\sqrt{\alpha\beta}}\,I.
	\]
	
	Let $x\in\Hh$ be a unit vector. Then
	\[
	\langle \Ree(Y)x,x\rangle
	=
	\Ree\langle Yx,x\rangle
	\le
	|\langle Yx,x\rangle|.
	\]
	By the Cauchy--Schwarz inequality and the assumption $\|K\|_\infty\le 1$,
	\[
	|\langle Yx,x\rangle|
	=
	\bigl|\langle KX^{1/2}x,X^{-1/2}x\rangle\bigr|
	\le
	\|KX^{1/2}x\|_2\,\|X^{-1/2}x\|_2
	\le
	\|X^{1/2}x\|_2\,\|X^{-1/2}x\|_2.
	\]
	
	On the other hand, for every $t\in[\alpha,\beta]$ one has
	\[
	(t-\alpha)(\beta-t)\ge 0,
	\]
	which is equivalent to
	\[
	t+\alpha\beta\,t^{-1}\le \alpha+\beta.
	\]
	By functional calculus,
	\[
	X+\alpha\beta\,X^{-1}\le (\alpha+\beta)I.
	\]
	Taking inner products with $x$ gives
	\[
	\langle Xx,x\rangle+\alpha\beta\,\langle X^{-1}x,x\rangle\le \alpha+\beta.
	\]
	Using the arithmetic--geometric mean inequality,
	\[
	2\sqrt{\alpha\beta}\,
	\sqrt{\langle Xx,x\rangle\langle X^{-1}x,x\rangle}
	\le
	\langle Xx,x\rangle+\alpha\beta\,\langle X^{-1}x,x\rangle
	\le
	\alpha+\beta.
	\]
	Hence
	\[
	\|X^{1/2}x\|_2\,\|X^{-1/2}x\|_2
	=
	\sqrt{\langle Xx,x\rangle\langle X^{-1}x,x\rangle}
	\le
	\frac{\alpha+\beta}{2\sqrt{\alpha\beta}}.
	\]
	Therefore
	\[
	\langle \Ree(Y)x,x\rangle\le \frac{\alpha+\beta}{2\sqrt{\alpha\beta}}.
	\]
	Since this holds for every unit vector $x$, we obtain
	\[
	\Ree(Y)\le \frac{\alpha+\beta}{2\sqrt{\alpha\beta}}\,I.
	\]
	Multiplying on both sides by $X^{1/2}$ yields
	\[
	\Ree(KX)\le \frac{\alpha+\beta}{2\sqrt{\alpha\beta}}\,X.
	\]
\end{proof}

\begin{prop}\label{prop:cond-rect}
	Let $A_1,\dots,A_m\in\B(\Hh,\Kk)$.
	Assume that for each $i$ there exist real numbers $0<\alpha_i\le \beta_i$ such that
	\[
	\alpha_i I\le \absop{A_i}\le \beta_i I.
	\]
	Then
	\[
	\absop{\sum_{i=1}^mA_i}
	\le
\sum_{i=1}^m
\frac{\alpha_i+\beta_i}{2\sqrt{\alpha_i\beta_i}}\,\absop{A_i}.
	\]
\end{prop}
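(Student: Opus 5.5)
Set $S:=\sum_{i=1}^m A_i\in\B(\Hh,\Kk)$ and take its polar decomposition $S=W\absop{S}$ with $W\in\B(\Hh,\Kk)$ a partial isometry and $W^*W=\supp(\absop{S})$. As in the proof of Proposition~\ref{prop:rect-AAP}, this gives
\[
\absop{S}=W^*S=\sum_{i=1}^m W^*A_i .
\]
Since $\absop{S}$ is self-adjoint, taking real parts yields
\[
\absop{S}=\Ree\Bigl(\sum_{i=1}^m W^*A_i\Bigr)=\sum_{i=1}^m \Ree(W^*A_i).
\]
This reduces the inequality to a term-by-term estimate
\[
\Ree(W^*A_i)\le \frac{\alpha_i+\beta_i}{2\sqrt{\alpha_i\beta_i}}\,\absop{A_i}\qquad (1\le i\le m),
\]
and summing these $m$ inequalities gives the claim.

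For each $i$, let $A_i=V_i\absop{A_i}$ be the polar decomposition. The hypothesis $\alpha_i I\le\absop{A_i}$ with $\alpha_i>0$ makes $\absop{A_i}$ invertible in $\B(\Hh)$, so $V_i$ is an isometry from $\Hh$ into $\Kk$; in particular $\|V_i\|_\infty\le1$. Then
\[
W^*A_i=K_iX_i,\qquad K_i:=W^*V_i\in\B(\Hh),\quad X_i:=\absop{A_i}.
\]
Here $K_i$ is a contraction on $\Hh$, being a product of two contractions. Moreover $X_i$ is positive with $\alpha_iI\le X_i\le\beta_iI$. Lemma~\ref{lem:cond-Ree} therefore applies directly and gives
\[
\Ree(K_iX_i)\le\frac{\alpha_i+\beta_i}{2\sqrt{\alpha_i\beta_i}}\,X_i,
\]
which is exactly the required termwise bound.

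The argument is short, so the main point to check is the bookkeeping with rectangular operators. The product $K_i=W^*V_i$ must land in $\B(\Hh)$, so that Lemma~\ref{lem:cond-Ree} is applied on a single Hilbert space. We also need $\absop{S}=W^*S$ in the infinite-dimensional setting; this follows from $W^*W\absop{S}=\absop{S}$, as in Proposition~\ref{prop:rect-AAP}. The invertibility of $\absop{A_i}$ is only used to write $A_i=K_i'X_i$ with a contraction $K_i'$. Even without it, one could take $V_i$ to be the polar partial isometry, since the lemma only requires $\|K\|_\infty\le1$.
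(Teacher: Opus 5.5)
Your proposal is correct and follows the paper's proof step for step: polar decomposition $S=W\absop{S}$, the identity $\absop{S}=\sum_i \Ree\bigl(W^*V_i\absop{A_i}\bigr)$, and Lemma~\ref{lem:cond-Ree} applied with $K=W^*V_i$ and $X=\absop{A_i}$, then summing over $i$. One small correction to your closing remark: invertibility of $\absop{A_i}$ is needed for more than making $V_i$ an isometry, since Lemma~\ref{lem:cond-Ree} itself requires $\alpha>0$ (it uses $X^{-1/2}$), but this is harmless because the hypotheses supply $\alpha_i>0$.
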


\begin{proof}
	Set
$
	S:=\sum_{i=1}^mA_i
$
	and write the polar decomposition
	\[
	S=W\absop{S},
	\]
	where $W\in\B(\Hh,\Kk)$ is a partial isometry.
	Then
	\[
	W^*S=W^*W\absop{S}=\absop{S}.
	\]
	
	For each $i$, write
$
	A_i=U_i\absop{A_i}
$
	for the polar decomposition of $A_i$.
	Since $\absop{A_i}\ge \alpha_i I>0$, the support projection of $\absop{A_i}$ is $I$,
	and therefore $U_i^*U_i=I$; that is, each $U_i$ is an isometry.
	In particular, $W^*U_i$ is a contraction.
	
	Now
	\[
	\absop{S}=W^*S=\sum_{i=1}^m W^*U_i\absop{A_i}.
	\]
	Since $\absop{S}$ is self-adjoint, taking real parts gives
	\[
	\absop{S}
	=
	\sum_{i=1}^m \Ree\,\!\bigl(W^*U_i\absop{A_i}\bigr).
	\]
	Applying Lemma~\ref{lem:cond-Ree} to
	\[
	X=\absop{A_i},\qquad K=W^*U_i,
	\]
	we obtain
	\[
	\Ree\!\bigl(W^*U_i\absop{A_i}\bigr)
	\le
	\frac{\alpha_i+\beta_i}{2\sqrt{\alpha_i\beta_i}}\,\absop{A_i}
	\]
	for every $i$.
	Summing these inequalities yields
	\[
	\absop{S}
	\le
	\sum_{i=1}^m
	\frac{\alpha_i+\beta_i}{2\sqrt{\alpha_i\beta_i}}\,\absop{A_i}.
	\]
\end{proof}
\begin{lem}\label{lem:one-sided-sharp}
	Let
$
	c(\kappa):=\frac{\kappa+1}{2\sqrt{\kappa}}
$ with $\kappa\ge 1.$
	Then for  every \(a<c(\kappa)\), there exist invertible
	\(2\times 2\) matrices \(A,B\) such that
	\[
	\kappa(A)=\kappa,\qquad \kappa(B)=1,
	\]
	but
	\[
	|A+B|\nleq a\,|A|+|B|.
	\]
	In particular, the coefficient \(c(\kappa)\) in front of \(|A|\) is sharp even when
	the coefficient in front of \(|B|\) is fixed at \(1=c(1)\).
\end{lem}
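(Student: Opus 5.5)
The construction will use the variational characterization of the modulus together with the equality case of the Cauchy--Schwarz step in Lemma~\ref{lem:cond-Ree}. The key fact is that for any $S\in\mathbb M_2$, any contraction $K$ and any vector $x$, one has $\langle \absop{S}x,x\rangle\ge \Ree\langle K^*Sx,x\rangle$. To see this, write $S=W\absop{S}$ and set $Y:=K^*W$, which is a contraction. Then $|\langle Y\absop{S}^{1/2}\absop{S}^{1/2}x,x\rangle|\le \|\absop{S}^{1/2}x\|\,\|\absop{S}^{1/2}Y^*x\|\le \langle\absop{S}x,x\rangle$, where the last step uses $\|\absop{S}^{1/2}Y^*x\|^2=\langle Y\absop{S}Y^*x,x\rangle\le\|\absop{S}\|\,\|x\|^2$. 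That bound is not the one we want, so instead we use the direct pairing $\Ree\langle K^*Sx,x\rangle=\Ree\langle \absop{S}x,W^*Kx\rangle$. I will simply choose the data so that $A+B$ and the test vector are explicit. If the clean variational bound proves awkward, a direct $2\times2$ computation of $\absop{A+B}$, in the style of the sharpness example for \eqref{eq:Lee}, serves as a fallback.

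\textbf{Construction.} Let $t=\sqrt{\kappa}\ge1$, $A=\diag(t,t^{-1})$ (so $\absop{A}=A$ and $\kappa(A)=\kappa$), and $x=\binom{1}{t}$. Then $Ax=\binom{t}{1}$, so $\langle Ax,x\rangle=2t$ and $\|Ax\|\,\|x\|=1+t^2$. Hence $\|Ax\|\,\|x\|=c(\kappa)\langle Ax,x\rangle$, which is exactly the equality case of the estimate in Lemma~\ref{lem:cond-Ree}. Choose a real rotation $V$ with $V^*Ax=\frac{\|Ax\|}{\|x\|}x$; this is possible since $Ax$ and $x$ are both nonzero real vectors. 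Put $B:=V$. Then $B$ is unitary, $\absop{B}=I$, and $\kappa(B)=1$.

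\textbf{Key estimate.} Since $V$ is unitary, $\absop{A+B}=\absop{V^*(A+B)}\ge \Ree\bigl(V^*(A+B)\bigr)$. This holds because for any $T$ with polar decomposition $T=U\absop{T}$ ($U$ unitary in finite dimension), Lemma~\ref{lem:cond-Ree}'s argument with $\alpha=\beta$, or equivalently the inequality $\Ree(U\absop{T})\le\absop{T}$, gives $\Ree T\le \absop{T}$; and $\absop{V^*T}=\absop{T}$. Now $V^*(A+B)=V^*A+I$, so
\[
\langle \absop{A+B}x,x\rangle\ \ge\ \Ree\langle V^*Ax,x\rangle+\|x\|^2=\|Ax\|\,\|x\|+\|x\|^2=c(\kappa)\langle \absop{A}x,x\rangle+\langle\absop{B}x,x\rangle .
\]
Because $\langle\absop{A}x,x\rangle=2t>0$, for every $a<c(\kappa)$ the right-hand side strictly exceeds $a\langle\absop{A}x,x\rangle+\langle\absop{B}x,x\rangle$. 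Therefore $\absop{A+B}\nleq a\absop{A}+\absop{B}$.

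\textbf{Remaining checks and main obstacle.} It remains only to ensure that $A+B$ is invertible, if this is required at all; the lemma only asks that $A$ and $B$ be invertible, which they are. The one step needing care is the inequality $\Ree T\le\absop{T}$, and more importantly the choice of $x$ achieving equality in the Kantorovich-type bound. With $x=\binom{1}{t}$ this is verified above, so I expect no real obstacle beyond confirming that the rotation $V$ exists.
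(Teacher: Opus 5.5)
There is a genuine gap. Your key estimate rests on $\Ree T\le\absop{T}$ (equivalently, your ``key fact'' $\langle\absop{S}x,x\rangle\ge\Ree\langle K^*Sx,x\rangle$ with $K=I$), and this inequality is false. For $S=\begin{pmatrix}0&1\\0&0\end{pmatrix}$ one gets $\absop{S}-\Ree S=\begin{pmatrix}0&-1/2\\-1/2&1\end{pmatrix}$, whose determinant is $-1/4$. Lemma~\ref{lem:cond-Ree} with $\alpha=\beta$ only covers the case where $\absop{T}$ is a scalar. For invertible $T$, that lemma yields only $\Ree(U\absop{T})\le c(\kappa(T))\,\absop{T}$, which is exactly the loss you are trying to exhibit.

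In your example the bound really does fail. Since $(A+B)x=Ax+Vx=2Ax$, Cauchy--Schwarz gives $\langle\absop{A+B}x,x\rangle\le\|(A+B)x\|\,\|x\|=2(1+t^2)$. Equality requires $x$ to be an eigenvector of $(A+B)^*(A+B)$. But $(A+B)^*(A+B)x=2(A^2x+x)$, so this forces $A^2x\parallel x$, i.e.\ $t=1$. Hence your displayed lower bound $\langle\absop{A+B}x,x\rangle\ge 2(1+t^2)$ is false for every $\kappa>1$.

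Worse, the construction itself does not witness sharpness. For $\kappa=4$ you get $V=\frac15\begin{pmatrix}4&3\\-3&4\end{pmatrix}$ and $\absop{A+B}=\frac{1}{10\sqrt{73}}\begin{pmatrix}244&18\\18&121\end{pmatrix}$. Then $\frac65\absop{A}+\absop{B}-\absop{A+B}$ is positive definite: its diagonal entries are $\approx 0.544$ and $0.184$, and its determinant is $9.44-80.18/\sqrt{73}\approx 0.056$. This holds even though $\frac65<c(4)=\frac54$.

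The missing idea is to scale $B$. The inequality $\Ree S\le\absop{S}$ becomes true only asymptotically for $S=\mu I+X$, since $\absop{\mu I+X}=(\mu^2I+2\mu\Ree X+X^*X)^{1/2}=\mu I+\Ree X+O(\mu^{-1})$ as $\mu\to\infty$. So keep your $A$, $x$ and $V$, but take $B=\mu V$. Then still $\kappa(B)=1$ and $\absop{B}=\mu I$, and $\absop{A+B}=\absop{V^*A+\mu I}=\mu I+\Ree(V^*A)+O(\mu^{-1})$. Because $V^*Ax=x$, we have $\langle\Ree(V^*A)x,x\rangle=\|x\|^2=1+t^2=c(\kappa)\langle\absop{A}x,x\rangle$. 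Hence $\langle(\absop{A+B}-a\absop{A}-\absop{B})x,x\rangle=2t\,(c(\kappa)-a)+O(\mu^{-1})>0$ for $\mu$ large, which proves the lemma.

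Repaired this way, your argument runs on the same mechanism as the paper's. There too $B_\mu$ is a large multiple of a rotation, but it is chosen so that $A+B_\mu\ge0$, which makes $\absop{A+B_\mu}=A+B_\mu$ exact. The conclusion then comes from a $2\times2$ determinant test as $\mu\to\infty$.
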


\begin{proof}
	If \(\kappa=1\), then \(c(1)=1\). Let \(a<1\), take
	\[
	A=I_2,\qquad B=\varepsilon I_2
	\]
	with \(0<\varepsilon<1-a\). Then \(\kappa(A)=\kappa(B)=1\), while
	\[
	|A+B|=(1+\varepsilon)I_2 \nleq (a+\varepsilon)I_2=a|A|+|B|.
	\]
	So the claim holds when \(\kappa=1\).
	
	Assume now that \(\kappa>1\), and write \(t:=\sqrt{\kappa}>1\). Set
	\[
	m:=\frac{t+t^{-1}}{2}=c(\kappa),
	\qquad
	d:=\frac{t-t^{-1}}{2}.
	\]
	Define
	\[
	A:=
	\begin{pmatrix}
		0 & t^{-1}\\
		t & 0
	\end{pmatrix},
	\qquad
	B_\mu:=
	\begin{pmatrix}
		\mu & d\\
		-d & \mu
	\end{pmatrix},
	\qquad \mu\ge m.
	\]
	Then
	\[
	A^*A=
	\begin{pmatrix}
		t^2 & 0\\
		0 & t^{-2}
	\end{pmatrix},
	\qquad
	|A|=
	\begin{pmatrix}
		t & 0\\
		0 & t^{-1}
	\end{pmatrix},
	\]
	so \(\kappa(A)=t^2=\kappa\).
	
	Also,
	\[
	B_\mu^*B_\mu=(\mu^2+d^2)I_2,
	\]
	hence
	\[
	|B_\mu|=\sqrt{\mu^2+d^2}\,I_2,
	\qquad
	\kappa(B_\mu)=1.
	\]
	Moreover,
	\[
	A+B_\mu=
	\begin{pmatrix}
		\mu & m\\
		m & \mu
	\end{pmatrix}\ge 0
	\qquad (\mu\ge m),
	\]
	so
	\[
	|A+B_\mu|=A+B_\mu=
	\begin{pmatrix}
		\mu & m\\
		m & \mu
	\end{pmatrix}.
	\]
	
	Suppose that
	\[
	|A+B_\mu|\le a\,|A|+|B_\mu|.
	\]
	Writing
	\[
	r_\mu:=\sqrt{\mu^2+d^2},
	\qquad
	\delta_\mu:=r_\mu-\mu>0,
	\]
	this becomes
	\[
	\begin{pmatrix}
		at+\delta_\mu & -m\\
		-m & at^{-1}+\delta_\mu
	\end{pmatrix}\ge 0.
	\]
	Hence its determinant must be nonnegative:
	\[
	(at+\delta_\mu)(at^{-1}+\delta_\mu)-m^2\ge 0.
	\]
	Since \(tt^{-1}=1\) and \(t+t^{-1}=2m\), this is
	\[
	a^2+2m\delta_\mu\,a+\delta_\mu^2-m^2\ge 0.
	\]
	Therefore
	\[
	a\ge \rho_\mu:=
	-m\delta_\mu+\sqrt{m^2+(m^2-1)\delta_\mu^2}.
	\]
	Now \(\delta_\mu=r_\mu-\mu\to 0\) as \(\mu\to\infty\), so
	\[
	\rho_\mu\longrightarrow m=c(\kappa).
	\]
	Given any \(a<c(\kappa)\), we can choose \(\mu\) sufficiently large so that
	\[
	a<\rho_\mu.
	\]
	For this choice of \(\mu\), the inequality
	\[
	|A+B_\mu|\le a\,|A|+|B_\mu|
	\]
	cannot hold. This proves the lemma.
\end{proof}
Next, we give a proof of Theorem~\ref{thm:triangle-another} based on Proposition~\ref{prop:cond-rect} and Lemma~\ref{lem:one-sided-sharp}.
\begin{proof}[Proof of Theorem~\ref{thm:triangle-another}]
	For each $i$, set
	\[
	\alpha_i:=\|A_i^{-1}\|_{\infty}^{-1},
	\qquad
	\beta_i:=\|A_i\|_{\infty}.
	\]
	Then
	\[
	\alpha_i I\le \absop{A_i}\le \beta_i I,
	\qquad
	\frac{\beta_i}{\alpha_i}=\|A_i\|_{\infty}\,\|A_i^{-1}\|_{\infty}=\kappa_i.
	\]
	Also,
$
	\alpha_i I\le \absop{A_i^*}\le \beta_i I,
$
because $\|A_i^*\|_{\infty}=\|A_i\|_{\infty}$ and $\|(A_i^*)^{-1}\|_{\infty}=\|A_i^{-1}\|_{\infty}$.

	By Proposition~\ref{prop:cond-rect},
	\[
	\absop{A_1+\cdots+A_m}
	\le
		\frac{\kappa_1+1}{2\sqrt{\kappa_1}}\absop{A_1}+\cdots+	\frac{\kappa_m+1}{2\sqrt{\kappa_m}}\absop{A_m},
	\]
	and similarly,
	\[
	\absop{A_1^*+\cdots+A_m^*}
	\le
\frac{\kappa_1+1}{2\sqrt{\kappa_1}}\absop{A_1^*}+\cdots+\frac{\kappa_m+1}{2\sqrt{\kappa_m}}\absop{A_m^*}.
	\]
	Thus,
\[
\sym{A_1+\cdots+A_m}
\le
\frac{\kappa_1+1}{2\sqrt{\kappa_1}}\,\sym{A_1}
+\cdots+
\frac{\kappa_m+1}{2\sqrt{\kappa_m}}\,\sym{A_m}.
\]

Next, recall the lifting
\[
\Gamma:\B(\Hh)\to\B(\Hh,\Hh\oplus\Hh),
\qquad
\Gamma(T):=\frac{1}{\sqrt2}\binom{T}{T^*}.
\]
Then $\Gamma$ is linear and, for each $i$,
\[
\absop{\Gamma(A_i)}^2
=
\qsym{A_i}^2
=
\frac{\absop{A_i}^2+\absop{A_i^*}^2}{2}.
\]
From
\[
\alpha_i I\le \absop{A_i},\ \absop{A_i^*}\le \beta_i I,
\]
it follows that
\[
\alpha_i I\le \absop{\Gamma(A_i)}\le \beta_i I.
\]
Applying Proposition~\ref{prop:cond-rect} to
\[
\Gamma(A_1),\dots,\Gamma(A_m)\in\B(\Hh,\Hh\oplus\Hh),
\]
yields
\[
\absop{\Gamma(A_1+\cdots+A_m)}
\le
\frac{\kappa_1+1}{2\sqrt{\kappa_1}}\,\absop{\Gamma(A_1)}
+\cdots+
\frac{\kappa_m+1}{2\sqrt{\kappa_m}}\,\absop{\Gamma(A_m)}.
\]
That is,
\[
\qsym{A_1+\cdots+A_m}
\le
\frac{\kappa_1+1}{2\sqrt{\kappa_1}}\,\qsym{A_1}
+\cdots+
\frac{\kappa_m+1}{2\sqrt{\kappa_m}}\,\qsym{A_m}.
\]
	This completes the proof.

We now prove the sharpness statement in Theorem~\ref{thm:triangle-another}.
Fix \(\kappa\ge 1\), and let \(c(\kappa)=(\kappa+1)/(2\sqrt{\kappa})\).
By Lemma~\ref{lem:one-sided-sharp}, for every \(a<c(\kappa)\) there exist invertible
\(2\times 2\) matrices \(A,B\) such that
\[
\kappa(A)=\kappa,\qquad \kappa(B)=1,
\]
but
\[
|A+B|\nleq a\,|A|+|B|.
\]
Thus the coefficient \(c(\kappa)\) is already sharp in the two-variable inequality for the
usual modulus.

To transfer this sharpness to the arithmetic and quadratic symmetric moduli, consider the
self-adjoint block matrices
\[
\widehat A:=
\begin{pmatrix}
	0 & A\\
	A^* & 0
\end{pmatrix},
\qquad
\widehat B:=
\begin{pmatrix}
	0 & B\\
	B^* & 0
\end{pmatrix}.
\]
Then
\[
\widehat A=\widehat A^*,\qquad \widehat B=\widehat B^*,
\]
and
\[
\kappa(\widehat A)=\kappa(A)=\kappa,
\qquad
\kappa(\widehat B)=\kappa(B)=1.
\]
Since \(\widehat A\) and \(\widehat B\) are self-adjoint, all three moduli coincide:
\[
\sym{\widehat A}=\qsym{\widehat A}=|\widehat A|
=
\begin{pmatrix}
	|A^*| & 0\\
	0 & |A|
\end{pmatrix},
\qquad
\sym{\widehat B}=\qsym{\widehat B}=|\widehat B|
=
\begin{pmatrix}
	|B^*| & 0\\
	0 & |B|
\end{pmatrix},
\]
and similarly
\[
\sym{\widehat A+\widehat B}
=
\qsym{\widehat A+\widehat B}
=
|\widehat A+\widehat B|
=
\begin{pmatrix}
	|A^*+B^*| & 0\\
	0 & |A+B|
\end{pmatrix}.
\]

Suppose, for contradiction, that one could replace the coefficient \(c(\kappa)\) in front of
the first term by some smaller constant \(a<c(\kappa)\) in the arithmetic symmetric modulus
inequality. Applying that putative inequality to \(\widehat A,\widehat B\) would give
\[
\sym{\widehat A+\widehat B}
\le
a\,\sym{\widehat A}+\sym{\widehat B}.
\]
Comparing the lower-right block yields
\[
|A+B|\le a\,|A|+|B|,
\]
contrary to Lemma~\ref{lem:one-sided-sharp}. Hence the coefficient \(c(\kappa)\) is sharp for
\(\sym{\cdot}\).

Exactly the same block comparison shows that \(c(\kappa)\) is also sharp for
\(\qsym{\cdot}\).

Consequently, the coefficient function
\[
c(\kappa)=\frac{\kappa+1}{2\sqrt{\kappa}}
\]
is individually sharp for each of the three moduli
\[
|\cdot|,\qquad |\cdot|_{\mathrm{sym}},\qquad |\cdot|_{\mathrm{qsym}}.
\]
This completes the proof.
\end{proof}
\section{Clarkson--McCarthy type inequalities for the quadratic symmetric modulus}
\label{sec:clarkson}
\begin{proof}[Proof of Theorem~\ref{thm:identity}]
	Recall that for every operator $T$,
	\[
	|T|_{\mathrm{qsym}}^{2}
	=
	\frac{|T|^2+|T^*|^2}{2}
	=
	\frac{T^*T+TT^*}{2}.
	\]
	For each $1\le i\le m$, set
$
	T_i:=\sum_{j=1}^n u_{ij}A_j.
$
	Then
	\[
	\sum_{i=1}^m |T_i|_{\mathrm{qsym}}^{2}
	=
	\frac12\sum_{i=1}^m\bigl(T_i^*T_i+T_iT_i^*\bigr).
	\]
	
	Assume first that $U^*U=I_n$.
	Then
	\[
	\sum_{i=1}^m T_i^*T_i
	=
	\sum_{i=1}^m
	\left(\sum_{j=1}^n \overline{u_{ij}}A_j^*\right)
	\left(\sum_{k=1}^n u_{ik}A_k\right)
	=
	\sum_{j,k=1}^n
	\left(\sum_{i=1}^m \overline{u_{ij}}u_{ik}\right)A_j^*A_k.
	\]
	Since
	\[
	\sum_{i=1}^m \overline{u_{ij}}u_{ik}=(U^*U)_{jk}=\delta_{jk},
	\]
	we obtain
	\[
	\sum_{i=1}^m T_i^*T_i=\sum_{j=1}^n A_j^*A_j.
	\]
	Similarly,
	\[
	\sum_{i=1}^m T_iT_i^*
	=
	\sum_{i=1}^m
	\left(\sum_{j=1}^n u_{ij}A_j\right)
	\left(\sum_{k=1}^n \overline{u_{ik}}A_k^*\right)
	=
	\sum_{j,k=1}^n
	\left(\sum_{i=1}^m u_{ij}\overline{u_{ik}}\right)A_jA_k^*
	=
	\sum_{j=1}^n A_jA_j^*.
	\]
	Therefore
	\[
	\sum_{i=1}^m |T_i|_{\mathrm{qsym}}^{2}
	=
	\frac12\sum_{j=1}^n \bigl(A_j^*A_j+A_jA_j^*\bigr)
	=
	\sum_{j=1}^n |A_j|_{\mathrm{qsym}}^{2}.
	\]
	
	Conversely, assume that
	\[
	\sum_{i=1}^m
	\left|
	\sum_{j=1}^n u_{ij}A_j
	\right|_{\mathrm{qsym}}^{2}
	=
	\sum_{j=1}^n |A_j|_{\mathrm{qsym}}^{2}
	\]
	holds for all $A_1,\ldots,A_n\in \mathbb{M}_N$.
	Let $P$ be any rank-one projection in $\mathbb{M}_N$, and choose arbitrary scalars
	$a_1,\ldots,a_n\in\mathbb C$. Put
	\[
	A_j:=a_jP
	\qquad (1\le j\le n).
	\]
	Since $P=P^*=P^2$, we have
	\[
	\left|
	\left(\sum_{j=1}^n u_{ij}a_j\right)P
	\right|_{\mathrm{qsym}}^{2}
	=
	\left|
	\sum_{j=1}^n u_{ij}a_j
	\right|^2 P,
	\qquad
	|a_jP|_{\mathrm{qsym}}^{2}=|a_j|^2P.
	\]
	Hence the assumed identity becomes
	\[
	\sum_{i=1}^m
	\left|
	\sum_{j=1}^n u_{ij}a_j
	\right|^2 P
	=
	\sum_{j=1}^n |a_j|^2 P.
	\]
	Since $P\ne 0$, it follows that
	\[
	\sum_{i=1}^m
	\left|
	\sum_{j=1}^n u_{ij}a_j
	\right|^2
	=
	\sum_{j=1}^n |a_j|^2
	\qquad
	\text{for all }(a_1,\ldots,a_n)\in\mathbb C^n.
	\]
	This is exactly the condition that $U$ is an isometry, namely,
$
	U^*U=I_n.
$
	The proof is complete.
\end{proof}
	We now turn to a unitary-orbit version of Jensen's inequality for monotone convex/concave functions.
These orbit inequalities yield matrix-order dominations, and hence  deduce Schatten $p$-norm consequences.

The following unitary-orbit inequalities on monotone concave/convex functions will be used in the sequel.
Lemma~\ref{lem:unitary_orbit_conv_conc}\,(1) \emph{the first inequality} is obtained by a straightforward induction on $n$
from the corresponding two-variable inequality \cite[Corollary~3.2]{BL12}.
Lemma~\ref{lem:unitary_orbit_conv_conc}\,(2) \emph{the first inequality} follows similarly from
\cite[Theorem~3.1]{BL12} (equivalently, \cite[Theorem~2.1]{AB07}).
Lemma~\ref{lem:unitary_orbit_conv_conc}\,(1) \emph{the second inequality} follows from the
isometric-column Jensen inequality \cite[Corollary~2.4]{BL12} by choosing $Z_i=\sqrt{\alpha_i}\,I$,
so that $\sum_{i=1}^n Z_i^*Z_i=I$ and $\sum_{i=1}^n Z_i^*A_iZ_i=\sum_{i=1}^n \alpha_i A_i$.
Similarly, Lemma~\ref{lem:unitary_orbit_conv_conc}\,(2) \emph{the second inequality} follows from the same result,
using the fact that the inequality in \cite[Corollary~2.4]{BL12} reverses for concave functions.

\begin{lem}\label{lem:unitary_orbit_conv_conc}
	Let $A_1,\dots,A_n\in\Mn$ be positive semidefinite and let
	$\alpha_1,\dots,\alpha_n$ be positive real numbers such that $\sum_{j=1}^n\alpha_j=1$.
	Then 
	\begin{enumerate}
		\item for every monotone convex function $f:[0,\infty)\to\mathbb{R}$ with $f(0)\le 0$,
		there exist unitaries $U,U_1,\dots,U_n\in\Mn$ such that
		\begin{align*}
			f\!\Bigl(\sum_{j=1}^n A_j\Bigr)
			\ &\ge\
			\sum_{j=1}^n U_j f(A_j)U_j^*,\\
			f\!\Bigl(\sum_{j=1}^n \alpha_j A_j\Bigr)
			\ &\le\
			U\Bigl(\sum_{j=1}^n \alpha_j f(A_j)\Bigr)U^*;
		\end{align*}
		
		\item for every monotone concave function $f:[0,\infty)\to\mathbb{R}$ with $f(0)\ge 0$,
		there exist unitaries $U,U_1,\dots,U_n\in\Mn$ such that
		\begin{align*}
			f\!\Bigl(\sum_{j=1}^n A_j\Bigr)
			\ &\le\
			\sum_{j=1}^n U_j f(A_j)U_j^*,\\
			f\!\Bigl(\sum_{j=1}^n \alpha_j A_j\Bigr)
			\ &\ge\
			U\Bigl(\sum_{j=1}^n \alpha_j f(A_j)\Bigr)U^*.
		\end{align*}
	\end{enumerate}
\end{lem}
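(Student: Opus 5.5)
The lemma collects four unitary-orbit inequalities. We reduce each to a two-variable result of Bourin--Lee and Aujla--Bourin, together with the isometric-column Jensen inequality. We treat the concave case (2) first; the convex case (1) is obtained by the same argument with all inequalities reversed.

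\emph{First inequality in (2): induction on $n$.} The case $n=1$ is trivial with $U_1=I$. The case $n=2$ is Theorem~\ref{thm:Bourin--Uchiyama}, which is \cite[Theorem~3.1]{BL12}. Suppose the claim holds for $n-1$ and put $S:=\sum_{j=1}^{n-1}A_j\ge0$. By the two-variable case there are unitaries $W,U_n$ with
\[
f\Bigl(S+A_n\Bigr)\le Wf(S)W^*+U_nf(A_n)U_n^* .
\]
By the induction hypothesis, $f(S)\le\sum_{j<n}V_jf(A_j)V_j^*$ for some unitaries $V_j$. Conjugation by $W$ preserves the order, so we may set $U_j:=WV_j$ for $j<n$. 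The only care needed is that $f(S)$ and the summands may be non-positive, but this causes no trouble. The order relations are preserved under conjugation and addition, and the hypothesis $f(0)\ge0$ enters only through the two-variable theorem.

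\emph{First inequality in (1).} This follows identically from the convex two-variable result \cite[Corollary~3.2]{BL12}, which gives $f(A+B)\ge Uf(A)U^*+Vf(B)V^*$ for monotone convex $f$ with $f(0)\le0$. The same induction applies with all inequalities reversed.

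\emph{Second inequalities: Jensen.} We invoke \cite[Corollary~2.4]{BL12}. If $Z_1,\dots,Z_n\in\Mn$ satisfy $\sum_j Z_j^*Z_j=I$ and $f$ is monotone convex, then there is a unitary $U$ with
\[
f\Bigl(\sum_j Z_j^*A_jZ_j\Bigr)\le U\Bigl(\sum_j Z_j^*f(A_j)Z_j\Bigr)U^*,
\]
and the reverse inequality holds for monotone concave $f$. Choose $Z_j=\sqrt{\alpha_j}\,I$. Then $\sum_jZ_j^*Z_j=\sum_j\alpha_jI=I$, and both sides reduce to $f(\sum_j\alpha_jA_j)$ and $U(\sum_j\alpha_jf(A_j))U^*$. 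This gives the second inequality in (1), and in (2) after reversal.

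The main point to check is that the cited Jensen result needs no sign condition at $0$ and applies to arbitrary monotone convex or concave $f$ on $[0,\infty)$. This holds because the $Z_j$ form an isometric column, so no constant term is lost. If the cited statement is only for nondecreasing $f$, we handle the nonincreasing case by replacing $f$ with $-f$, which exchanges convex and concave, and then taking negatives. Negation reverses the inequality while conjugation by $U$ commutes with it, so the required direction is recovered.
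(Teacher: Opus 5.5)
Your proposal is correct and follows essentially the same route as the paper. The first inequalities come from induction on the number of summands, using the two-variable results (Theorem~\ref{thm:Bourin--Uchiyama} and its convex counterpart), and the second inequalities come from the isometric-column Jensen inequality with $Z_j=\sqrt{\alpha_j}\,I$. Your remarks that the order is preserved under conjugation and addition whatever the sign of $f$, and that nonincreasing $f$ reduces to nondecreasing $f$ via $f\mapsto -f$, are correct and fill in details the paper leaves implicit.
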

	\begin{proof}[Proof of Theorem~\ref{thm:unitary-orbit}]
		By Theorem~\ref{thm:identity}, the assumption implies that
		\[
		\sum_{i=1}^m |B_i|_{\mathrm{qsym}}^{2}
		=
		\sum_{j=1}^n |A_j|_{\mathrm{qsym}}^{2}.
		\]
		
		\noindent\emph{Case 1: \(p\ge2\).}
		Take \(f(t)=t^{p/2}\) in Lemma~\ref{lem:unitary_orbit_conv_conc}~(1).
		Then there exist unitaries \(U_1,\ldots,U_m\) and \(W\) such that
		\begin{align*}
			\sum_{i=1}^m U_i|B_i|_{\mathrm{qsym}}^pU_i^*
			&\le
			\left(\sum_{i=1}^m |B_i|_{\mathrm{qsym}}^{2}\right)^{p/2}\\
			&=
			\left(\sum_{j=1}^n |A_j|_{\mathrm{qsym}}^{2}\right)^{p/2}\\
			&=
			n^{p/2}
			\left(\frac1n\sum_{j=1}^n |A_j|_{\mathrm{qsym}}^{2}\right)^{p/2}\\
			&\le
			n^{\frac p2-1}
			W\left(\sum_{j=1}^n |A_j|_{\mathrm{qsym}}^p\right)W^*.
		\end{align*}
		Conjugating by \(W^*\) and replacing each \(U_i\) by \(W^*U_i\), we obtain
		\[
		\sum_{i=1}^m U_i|B_i|_{\mathrm{qsym}}^pU_i^*
		\le
		n^{\frac p2-1}\sum_{j=1}^n |A_j|_{\mathrm{qsym}}^p.
		\]
		
		The second inequality is proved in the same way, interchanging the roles of
		\((A_1,\ldots,A_n)\) and \((B_1,\ldots,B_m)\).
		
		\medskip
		\noindent	\emph{Case 2: \(0<p\le2\).}
		The argument is the same, except that the inequalities in Lemma~\ref{lem:unitary_orbit_conv_conc} reverse.
	\end{proof}

Next,	we illustrate a Jensen-type principle for unitarily invariant norms, which allows us to convert the square-sum
identity \eqref{eq:identity} into $p$-power estimates via the function $t\mapsto t^{p/2}$.

We will use the following Jensen-type inequalities for unitarily invariant norms.
\begin{lem}[{\cite[Lemma~2.1]{HK08}}]\label{lem:Jensen_type_inequalities_uin}
	Let $m\ge1$. Let $A_1,\dots,A_m\in\Mn$ be positive semidefinite and let
	$\alpha_1,\dots,\alpha_m$ be positive real numbers such that $\sum_{j=1}^m \alpha_j=1$.
	Then
	\begin{enumerate}
		\item for every convex function $f:[0,\infty)\to[0,\infty)$ with $f(0)=0$,
		\begin{align*}
			\unorm{\, f\!\Bigl(\sum_{j=1}^m \alpha_j A_j\Bigr)\,}
			&\le
			\unorm{\, \sum_{j=1}^m \alpha_j f(A_j)\,},\\
			\unorm{\, \sum_{j=1}^m f(A_j)\,}
			&\le
			\unorm{\, f\!\Bigl(\sum_{j=1}^m A_j\Bigr)\,};
		\end{align*}
		
		\item for every concave function $f:[0,\infty)\to[0,\infty)$ with $f(0)=0$,
		\begin{align*}
			\unorm{\, \sum_{j=1}^m \alpha_j f(A_j)\,}
			&\le
			\unorm{\, f\!\Bigl(\sum_{j=1}^m \alpha_j A_j\Bigr)\,},\\
			\unorm{\, f\!\Bigl(\sum_{j=1}^m A_j\Bigr)\,}
			&\le
			\unorm{\, \sum_{j=1}^m f(A_j)\,}.
		\end{align*}
	\end{enumerate}
\end{lem}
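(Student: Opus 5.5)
The plan is to reduce all four inequalities to statements about eigenvalues. By the Ky Fan dominance principle, it suffices to prove each inequality for the Ky Fan $k$-norms $\|\cdot\|_{(k)}$, $1\le k\le n$. A preliminary observation is that every function in (1) and (2) is monotone. A convex $f:[0,\infty)\to[0,\infty)$ with $f(0)=0$ attains its minimum at $0$ and is therefore nondecreasing. A concave $f:[0,\infty)\to[0,\infty)$ is nondecreasing by the argument given in the Introduction. Hence Lemma~\ref{lem:unitary_orbit_conv_conc} applies in both cases.

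The two Jensen (weighted-mean) inequalities follow directly from the orbit versions. In case (1), the second inequality of Lemma~\ref{lem:unitary_orbit_conv_conc}(1) gives
\[
0\le f\!\Bigl(\sum_j\alpha_jA_j\Bigr)\le U\Bigl(\sum_j\alpha_jf(A_j)\Bigr)U^*.
\]
Weyl monotonicity then gives $\lambda_i\bigl(f(\sum_j\alpha_jA_j)\bigr)\le\lambda_i\bigl(\sum_j\alpha_jf(A_j)\bigr)$ for every $i$, and hence the norm inequality for every unitarily invariant norm. Case (2) is identical, using the second inequality of Lemma~\ref{lem:unitary_orbit_conv_conc}(2) with the order reversed.

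The sub/superadditivity inequalities are the real content and cannot be read off from Lemma~\ref{lem:unitary_orbit_conv_conc}. The orbit bound $f(\sum A_j)\ge\sum U_jf(A_j)U_j^*$ compares $f(\sum A_j)$ with a \emph{rotated} sum $\sum U_jf(A_j)U_j^*$, and no norm comparison between that and $\sum f(A_j)$ is available. I would first reduce to two summands by induction on $m$. In the convex case, for instance,
\[
\unorm{f(A_1)+\cdots+f(A_m)}\le\unorm{f(A_1+\cdots+A_{m-1})+f(A_m)}\le\unorm{f(A_1+\cdots+A_m)}.
\]
The first step is not immediate, since $\unorm{X+Y}\le\unorm{X'+Y}$ need not follow from $\unorm X\le\unorm{X'}$. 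I would therefore carry the induction at the level of Ky Fan $k$-norms together with the orbit dominance: since $f(A_1+\dots+A_{m-1})\ge V(\sum_{j<m}U_jf(A_j)U_j^*)V^*$, I would rather prove directly the $m$-term statement
\[
\|f(\textstyle\sum_j A_j)\|_{(k)}\ge\|\textstyle\sum_j f(A_j)\|_{(k)},
\]
which needs no induction.

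For this $m$-term statement, the approach I would follow is Bourin--Uchiyama's. Set $T=\sum_jA_j$ and write $A_j=T^{1/2}K_jT^{1/2}$ with positive contractions $K_j$ satisfying $\sum_jK_j=\supp T\le I$; equivalently, use the column $X=(A_1^{1/2},\dots,A_m^{1/2})^T$, for which $X^*X=T$. Let $P$ be the spectral projection of $T$ (restricted to the relevant range) onto its top $k$ eigenvalues for the convex case. For the concave case, let $Q$ be a rank-$k$ projection realizing $\|\sum_jf(A_j)\|_{(k)}=\tr Q\sum_jf(A_j)$. For a concave nonnegative $f$ and a positive contraction $K$, one has the scalar-type inequality $\tr Q f(T^{1/2}KT^{1/2})\le\tr f(\cdot)$ on the compressed spectrum; the key tool is the Bourin-type inequality
\[
\tr Q f(Z^*AZ)\ge\tr QZ^*f(A)Z
\]
for contractions $Z$ and nondecreasing concave $f$ with $f(0)\ge 0$. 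Summing over $j$ and using $\sum_jK_j\le I$ would give $\tr Q\sum_jf(A_j)\le\sum_{i\le k}f(\lambda_i(T))$, which is the concave inequality; the convex case is symmetric, with the reversed trace inequality.

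I expect this last step to be the main obstacle: establishing the compressed-trace Jensen inequality uniformly over rank-$k$ projections and summing it without losing the $k$-norm structure. If it resists, I would fall back on proving it for the piecewise-linear extremal functions $t\mapsto\min(t,s)$ (concave case) and $t\mapsto(t-s)_+$ (convex case), and then extend by positive combinations and limits, since every admissible $f$ is such a limit.
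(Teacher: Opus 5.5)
Your derivation of the two Jensen-type inequalities is correct: use the second inequalities of Lemma~\ref{lem:unitary_orbit_conv_conc}, then Weyl monotonicity, then Ky Fan dominance. The paper does not prove this lemma; it quotes it from HK08. In the literature, the second inequalities in (1) and (2) are Kosem's superadditivity theorem and the Bourin--Uchiyama subadditivity theorem.

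For those two inequalities your sketch has a genuine gap, and it has two concrete defects.

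\emph{The projections are in the wrong roles.} In the concave case you need a \emph{lower} bound for $\|\sum_j f(A_j)\|_{(k)}$. It therefore suffices to exhibit one good projection: the spectral projection $E$ of $T$ onto its top $k$ eigenvalues, with $\tr E\sum_jf(A_j)\ge\sum_{i\le k}f(\lambda_i(T))$.
\begin{itemize}
\item What you aim at instead, $\tr Q\sum_jf(A_j)\le\sum_{i\le k}f(\lambda_i(T))$ for a maximizing $Q$, says $\|\sum_jf(A_j)\|_{(k)}\le\|f(T)\|_{(k)}$. That is the convex-type inequality, and it is false for concave $f$: take $n=1$, $f(t)=\sqrt t$, $A_1=A_2=1$, which gives $2>\sqrt2$.
\item Dually, in the convex case a bound for the single projection $P$ says nothing about $\|\sum_jg(A_j)\|_{(k)}=\max_Q\tr Q\sum_jg(A_j)$. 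There you need every rank-$k$ projection $Q$.
\end{itemize}

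\emph{The proposed key tool is false.} The inequality $\tr Qf(Z^*AZ)\ge\tr QZ^*f(A)Z$ for all rank-$k$ projections $Q$ is, for $k=1$, the operator Jensen inequality $f(Z^*AZ)\ge Z^*f(A)Z$. By Hansen--Pedersen this holds for all contractions only when $f$ is operator concave, so it fails for $f(t)=\min(t,1)$. Bourin's inequality holds only up to a unitary conjugation, and that unitary is exactly what destroys the common-projection structure you need.

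Your fallback to extremal functions is the right idea, but the bookkeeping has to change. In the concave case, $f\mapsto\|\sum_jf(A_j)\|_{(k)}$ is only subadditive. Ky Fan inequalities proved separately for each $\min(t,s)$ therefore cannot be added together. They can be added if you prove the stronger fixed-projection statement with $E$. Then both $\sum_{i\le k}f(\lambda_i(T))$ and $\tr E\sum_jf(A_j)$ are linear in $f$, and it suffices to treat $f\equiv1$, $f(t)=t$ and $f_s(t)=\min(t,s)$.

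For $s\ge\lambda_{k+1}(T)$, write $f_s(t)=t-(t-s)_+$ and use
\[
\tr E\sum_j(A_j-s)_+\le\sum_j\tr(A_j-s)_+\le\tr(T-s)_+=\sum_{i\le k}(\lambda_i(T)-s)_+ .
\]
The middle step is trace superadditivity for convex functions vanishing at $0$. This is where your decomposition $A_j=T^{1/2}K_jT^{1/2}$ with $\sum_jK_j\le I$, combined with the contractive Jensen inequality, is legitimately used, at the level of the full trace. For $s<s':=\lambda_{k+1}(T)$, use the pointwise bound $f_s\ge(s/s')f_{s'}$ and the previous case.

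In the convex case the cone reduction is valid as it stands, because the triangle inequality for $\|\cdot\|_{(k)}$ goes the right way. The function $(t-s)_+$ is then handled with the same two ingredients, for an arbitrary rank-$k$ projection $Q$.

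Without such an argument, or a citation of Kosem and Bourin--Uchiyama, the second inequalities in (1) and (2) remain unproved.
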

\begin{proof}[Proof of Theorem~\ref{thm:uin}] By Theorem~\ref{thm:identity}, the condition $(B_1,\ldots,B_m)^T=U(A_1,\ldots,A_n)^T$ for some isometry $U=(u_{ij})$ implies that
	\[
			\sum_{i=1}^m
	\left|
 B_i
	\right|_{\mathrm{qsym}}^{2}
	=
	\sum_{j=1}^n |A_j|_{\mathrm{qsym}}^{2}.
	\]
\emph{Case 1: $p\ge 2$.} Take $f(t)=t^{\frac{p}{2}}$ in Lemma~\ref{lem:Jensen_type_inequalities_uin}~(1), we have
\begin{align*}
	\unorm{\sum_{i=1}^m
		\left|
		B_i
		\right|_{\mathrm{qsym}}^{p}}&\le \unorm{\left( \sum_{i=1}^m
		\left|
		B_i
		\right|_{\mathrm{qsym}}^{2}\right)^\frac{p}{2}}\\
		&=\unorm{\left( \sum_{j=1}^n
			\left|
		A_j
			\right|_{\mathrm{qsym}}^{2}\right)^\frac{p}{2}}
			=n^{\frac{p}{2}}\unorm{\left( \sum_{j=1}^n
				\left|
				A_j
				\right|_{\mathrm{qsym}}^{2}/n\right)^\frac{p}{2}}\\
				&\le n^{\frac{p}{2}-1}\unorm{\sum_{j=1}^n |A_j|_{\mathrm{qsym}}^{p}}.
\end{align*}
Similarly, we have
\[
\unorm{\sum_{j=1}^n |A_j|_{\mathrm{qsym}}^{p}}\le m^{\frac{p}{2}-1}\unorm{\sum_{i=1}^m
	\left|
	B_i
	\right|_{\mathrm{qsym}}^{p}}.
\]
\emph{Case 2: $0<p\le 2$.} The argument is analogous to that for the case \(p \ge 2\), except that all the inequalities are reversed.
\end{proof}
\section{Proof of Theorem~\ref{thm:Q23-all-x-outside}}
\label{sec:proof-q23}
	\begin{proof}[Proof of Theorem~\ref{thm:Q23-all-x-outside}]
	Let $n=1$ and take scalar matrices
	\[
	A=1,\qquad B=\frac{x-1}{x}\quad (\text{note that }x\neq 0 \text{ since }x\notin[0,1]).
	\]
	Then
	\[
	A\nabla_x B=(1-x)A+xB=(1-x)+x\cdot\frac{x-1}{x}=0,
	\]
	while
	\[
	B\nabla_x A=(1-x)B+xA=(1-x)\frac{x-1}{x}+x=\frac{2x-1}{x}\neq 0,
	\]
	because $x\notin[0,1]$ implies $x\neq \tfrac12$.
	Moreover,
	\[
	A-B=1-\frac{x-1}{x}=\frac1x\neq 0.
	\]
	
	Assume, for contradiction, that Theorem~\ref{thm:BL-C22} holds for this $x$.
	Since $x\notin[0,1]$, we have $x(1-x)<0$.
	With $A\nabla_x B=0$, the identity in Theorem~\ref{thm:BL-C22} would reduce to
	\[
	|A\oplus B|^{2}
	=
	V\,|B\nabla_x A|^{2}V^{*}
	+
	x(1-x)\Bigl\{\,S\,|A-B|^{2}S^{*}+T\,|A-B|^{2}T^{*}\Bigr\},
	\]
	for some isometries $V,S,T\in \mathbb M_{2,1}$.
	Moving the last term to the left and using $-x(1-x)>0$, we obtain
	\[
	|A\oplus B|^{2}
	+(-x(1-x))\Bigl\{\,S\,|A-B|^{2}S^{*}+T\,|A-B|^{2}T^{*}\Bigr\}
	=
	V\,|B\nabla_x A|^{2}V^{*}.
	\]
	Here $|A\oplus B|^{2}=\diag(|A|^{2},|B|^{2})=\diag\!\bigl(1,|(x-1)/x|^{2}\bigr)$ is positive definite,
	and the bracketed term is positive semidefinite; hence the whole left-hand side is positive definite,
	thus has rank $2$.
	
	On the other hand, since $V\in M_{2,1}$ is an isometry, $VV^{*}$ is a rank-one projection; moreover
	$|B\nabla_x A|^{2}>0$. Therefore the right-hand side $V\,|B\nabla_x A|^{2}V^{*}$ has rank $1$.
	
	This is impossible. Hence Theorem~\ref{thm:BL-C22} cannot hold for this $x$.
	Since $x\in\mathbb R\setminus[0,1]$ was arbitrary, the claim follows.
\end{proof}
\section{Proof of Theorem~\ref{thm:main1}}
\label{sec:proof-main1}
In this section, we prove Theorem~\ref{thm:main1}.

\subsection{Trace necessary condition}

Assume \eqref{eq:Bourin_Lee_p_le_2} holds for a given $Z\in\Mn$ and some unitaries $U,V$. Then, taking traces yields
\begin{equation}\label{eq:trace-necessary}
	\Tr\left(\frac{|Z|^{p}+|Z^{*}|^{p}}{2}\right)^{1/p}
	\le \Tr|\Ree\,Z|+\Tr|\Ima\,Z|.
\end{equation}
Indeed, all terms in \eqref{eq:Bourin_Lee_p_le_2} are positive semidefinite, and for $0\le A\le B$ we have $\Tr A\le \Tr B$.
Moreover, the right-hand side of \eqref{eq:Bourin_Lee_p_le_2} has trace independent of the choice of $U,V$ by unitary invariance.
Consequently, if \eqref{eq:trace-necessary} fails for some $Z$, then \eqref{eq:Bourin_Lee_p_le_2} fails
for that $Z$ for all choices of $U,V$.

Thus, to disprove \eqref{eq:Bourin_Lee_p_le_2} for $p>2$, it suffices to find, for a given $p$, a matrix $Z$ violating
\eqref{eq:trace-necessary}.

\subsection{A $2\times 2$ counterexample for $p>2$}

Fix $\theta\in(0,\pi/2)$ and set $c=\cos\theta$, $s=\sin\theta$. Define
\begin{equation}\label{eq:Ztheta}
	Z_\theta=
	\begin{pmatrix}
		c & 0\\
		-s & 0
	\end{pmatrix}\in\mathbb{M}_2.
\end{equation}
Note that $Z_\theta$ is real, hence $Z_\theta^*=Z_\theta^{\mathsf T}$.

\begin{cla}\label{cla:left}
	For every $p>0$,
	\begin{equation}\label{eq:left-trace}
		\Tr\left(\frac{|Z_\theta|^{p}+|Z_\theta^{*}|^{p}}{2}\right)^{1/p}
		=\left(\frac{1+c}{2}\right)^{1/p}+\left(\frac{1-c}{2}\right)^{1/p}.
	\end{equation}
\end{cla}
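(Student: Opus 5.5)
Write $Z_\theta=xe_1^{*}$ with $x=(c,-s)^{\mathsf T}$, a unit vector, and $e_1=(1,0)^{\mathsf T}$. Then $Z_\theta$ is a rank-one partial isometry, so $|Z_\theta|$ and $|Z_\theta^*|$ are rank-one projections:
\[
|Z_\theta|^2=Z_\theta^{\mathsf T}Z_\theta=e_1e_1^{*}=:P,
\qquad
|Z_\theta^*|^2=Z_\theta Z_\theta^{\mathsf T}=xx^{*}=:Q.
\]
Since $P$ and $Q$ are projections, $P^{p/2}=P$ and $Q^{p/2}=Q$ for every $p>0$. Hence
\[
\frac{|Z_\theta|^p+|Z_\theta^*|^p}{2}=\frac{P+Q}{2},
\]
and this matrix does not depend on $p$.

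The claim therefore reduces to computing the eigenvalues of $(P+Q)/2$. We have
\[
P+Q=\begin{pmatrix}1+c^2 & -cs\\ -cs & s^2\end{pmatrix}.
\]
Its trace is $1+c^2+s^2=2$. Its determinant is $(1+c^2)s^2-c^2s^2=s^2=1-c^2$. The eigenvalues are therefore $1\pm c$; this is the same computation as in the earlier counterexample, where the eigenvalues depend only on $|\langle e_1,x\rangle|=c$. Consequently $(P+Q)/2$ has eigenvalues $(1\pm c)/2$. Both are nonnegative because $0<c<1$.

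Finally, we apply the functional calculus for $t\mapsto t^{1/p}$ to the positive semidefinite matrix $(P+Q)/2$. Its trace is the sum of the $1/p$-th powers of the eigenvalues, which gives \eqref{eq:left-trace}.

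There is no real obstacle. The only point to check is that the exponent $p/2$ acts trivially on the projections $|Z_\theta|^2$ and $|Z_\theta^*|^2$, since a projection equals all of its positive powers.
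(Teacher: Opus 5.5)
Your proposal is correct and follows essentially the same route as the paper: identify $|Z_\theta|=P$ and $|Z_\theta^*|=Q$ as projections so that the $p$-th powers are trivial, then read off the eigenvalues $(1\pm c)/2$ of $(P+Q)/2$ from its trace and determinant. Your rank-one factorization $Z_\theta=xe_1^{*}$ is only a slightly tidier way to see that $Q$ is a projection; the paper instead checks $Q^2=Q$ directly.
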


\begin{proof}
	Compute
	\[
	Z_\theta^*Z_\theta=
	\begin{pmatrix}
		c&-s\\ 0&0
	\end{pmatrix}
	\begin{pmatrix}
	c & 0\\
	-s & 0
\end{pmatrix}
	=
	\begin{pmatrix}
		1 & 0\\
		0 & 0
	\end{pmatrix}
	=:P.
	\]
	Hence $|Z_\theta|=(Z_\theta^*Z_\theta)^{1/2}=P$. Similarly,
	\[
	Z_\theta Z_\theta^*=
	\begin{pmatrix}
		c^2 & -cs\\
		-cs & s^2
	\end{pmatrix}
	=:Q.
	\]
	A direct computation shows $Q^2=Q$. Moreover $Q=Q^*$ (indeed $Q$ is real symmetric), hence $Q$ is an orthogonal projection.
	Therefore $|Z_\theta^*|=(Z_\theta Z_\theta^*)^{1/2}=Q$.
	
	Since $P$ and $Q$ are projections, $P^p=P$ and $Q^p=Q$ for every $p>0$.
	Thus
	\[
	\left(\frac{|Z_\theta|^{p}+|Z_\theta^{*}|^{p}}{2}\right)^{1/p}
	=
	\left(\frac{P+Q}{2}\right)^{1/p}.
	\]
	We compute the eigenvalues of $(P+Q)/2$. We have
	\[
	P+Q=
	\begin{pmatrix}
		1+c^2 & -cs\\
		-cs & s^2
	\end{pmatrix},
	\qquad
	\Tr(P+Q)=2,
	\qquad
	\det(P+Q)=s^2.
	\]
	Hence the characteristic polynomial is $\lambda^2-2\lambda+s^2=0$, so the eigenvalues are
	$\lambda_\pm=1\pm\sqrt{1-s^2}=1\pm c$.
	Therefore the eigenvalues of $(P+Q)/2$ are $(1\pm c)/2$, and those of $\bigl((P+Q)/2\bigr)^{1/p}$
	are $\bigl((1\pm c)/2\bigr)^{1/p}$.
	Taking the trace yields \eqref{eq:left-trace}.
\end{proof}

\begin{cla}\label{cla:right}
	For $Z_\theta$ in \eqref{eq:Ztheta},
	\begin{equation}\label{eq:right-trace}
		\Tr|\Ree\,Z_\theta|+\Tr|\Ima\,Z_\theta| = 1+s.
	\end{equation}
\end{cla}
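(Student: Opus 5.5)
We compute $\Ree Z_\theta$ and $\Ima Z_\theta$ explicitly from the Cartesian decomposition and read off their eigenvalues. Since $Z_\theta$ is real, $Z_\theta^*=Z_\theta^{\mathsf T}$, and so
\[
\Ree Z_\theta=\frac{Z_\theta+Z_\theta^{\mathsf T}}{2}=\begin{pmatrix} c & -s/2\\ -s/2 & 0\end{pmatrix},
\qquad
\Ima Z_\theta=\frac{Z_\theta-Z_\theta^{\mathsf T}}{2i}=\frac{1}{2i}\begin{pmatrix} 0 & s\\ -s & 0\end{pmatrix}.
\]
For a Hermitian matrix $H$, $\Tr|H|$ is the sum of the absolute values of its eigenvalues. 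So it suffices to find the spectra of these two $2\times2$ Hermitian matrices.

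For $\Ree Z_\theta$ we use the trace and determinant. The trace is $c$ and the determinant is $-s^2/4<0$ (since $\theta\in(0,\pi/2)$ gives $s>0$). Hence the two eigenvalues $\lambda_1>0>\lambda_2$ have opposite signs. Then
\[
\Tr|\Ree Z_\theta|=\lambda_1-\lambda_2=\sqrt{(\lambda_1+\lambda_2)^2-4\lambda_1\lambda_2}=\sqrt{c^2+s^2}=1.
\]

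For $\Ima Z_\theta$, the matrix $\frac{1}{2i}\begin{pmatrix}0&s\\-s&0\end{pmatrix}=\begin{pmatrix}0&-is/2\\ is/2&0\end{pmatrix}$ is Hermitian with trace $0$ and determinant $-s^2/4$. Its eigenvalues are therefore $\pm s/2$, and $\Tr|\Ima Z_\theta|=s$.

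Adding the two contributions gives $1+s$, which is \eqref{eq:right-trace}. The only step needing care is the sign of the determinant of $\Ree Z_\theta$: it guarantees the eigenvalues have opposite signs, which is what lets $\Tr|\cdot|$ be written as $\lambda_1-\lambda_2$ and expressed through the trace and determinant. No real obstacle is expected here. The substantive work comes afterwards, when comparing \eqref{eq:left-trace} with $1+s$ for $p>2$ and small $\theta$, where the left side behaves like $1+2^{-1/p}(\theta^2/4)^{1/p}$ and dominates $1+\theta$.
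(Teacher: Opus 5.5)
Your proof is correct and is essentially the paper's. Both compute $\Ree Z_\theta$ and $\Ima Z_\theta$ explicitly and add the absolute values of their eigenvalues; the only cosmetic difference is that the paper solves $\lambda^2-c\lambda-s^2/4=0$ to get $(c\pm1)/2$, while you obtain $\lambda_1-\lambda_2=\sqrt{c^2+s^2}=1$ from the trace and determinant.

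One small slip in your closing aside, which is not part of the claim: $\bigl((1-c)/2\bigr)^{1/p}\approx(\theta^2/4)^{1/p}=4^{-1/p}\theta^{2/p}$, with no extra factor $2^{-1/p}$. That constant does not matter for the later comparison with $1+\theta$.
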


\begin{proof}
	We compute
	\[
	\Ree\,Z_\theta=\frac{Z_\theta+Z_\theta^*}{2}
	=
	\begin{pmatrix}
		c & -s/2\\
		-s/2 & 0
	\end{pmatrix},
	\qquad
	\Ima\,Z_\theta=\frac{Z_\theta-Z_\theta^*}{2i}
	=
	\begin{pmatrix}
		0 & -is/2\\
		is/2 & 0
	\end{pmatrix}.
	\]
	For $\Ree\,Z_\theta$, the characteristic polynomial is
	\[
	\lambda^2-c\lambda-\frac{s^2}{4},
	\]
	whose roots are
	\[
	\lambda_\pm=\frac{c\pm\sqrt{c^2+s^2}}{2}=\frac{c\pm 1}{2}.
	\]
	Since $c\in(0,1)$, we have $\lambda_+>0$ and $\lambda_-<0$, hence
	\[
	\Tr|\Ree\,Z_\theta|
	=|\lambda_+|+|\lambda_-|
	=\frac{1+c}{2}+\frac{1-c}{2}=1.
	\]
	For $\Ima\,Z_\theta$, the eigenvalues are $\pm s/2$, hence $\Tr|\Ima\,Z_\theta|=s$.
	Adding yields \eqref{eq:right-trace}.
\end{proof}

Combining \eqref{eq:trace-necessary}, Claims \ref{cla:left} and \ref{cla:right}, we see that if
\eqref{eq:Bourin_Lee_p_le_2} were to hold for $Z_\theta$, then necessarily
\begin{equation}\label{eq:necessary-ineq-theta}
	\left(\frac{1+c}{2}\right)^{1/p}+\left(\frac{1-c}{2}\right)^{1/p} \le 1+s.
\end{equation}
\begin{cla}\label{cla:asymptotic}
	Let $p>2$. Then there exists $\theta_0\in(0,\pi/2)$ such that for all
	$\theta\in(0,\theta_0)$ the inequality \eqref{eq:necessary-ineq-theta} fails.
\end{cla}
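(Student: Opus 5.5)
We need to show that for $p>2$ and small $\theta$,
\[
\left(\frac{1+c}{2}\right)^{1/p}+\left(\frac{1-c}{2}\right)^{1/p} > 1+s .
\]
The plan is a small-$\theta$ asymptotic comparison of the two sides of \eqref{eq:necessary-ineq-theta}. Both sides tend to $1$ as $\theta\to0$ (since $c\to1$ and $s\to0$), so the task is to compare the rates at which they approach $1$.

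On the left, the first term is $\bigl((1+c)/2\bigr)^{1/p}=\bigl(1-(1-c)/2\bigr)^{1/p}$. Since $1-c=2\sin^2(\theta/2)=O(\theta^2)$, this term equals $1-O(\theta^2)$. The second term is
\[
\left(\frac{1-c}{2}\right)^{1/p}=\bigl(\sin^2(\theta/2)\bigr)^{1/p}=\sin(\theta/2)^{2/p},
\]
which is asymptotic to $(\theta/2)^{2/p}$. On the right, $1+s=1+\sin\theta\le 1+\theta$.

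It therefore suffices to show
\[
\sin(\theta/2)^{2/p}-C\theta^2 > \theta
\]
for small $\theta$, where $C$ is a constant controlling the first term. Because $p>2$, the exponent satisfies $2/p<1$. Hence $\sin(\theta/2)^{2/p}\ge (\theta/\pi)^{2/p}$, using $\sin x\ge 2x/\pi$ on $[0,\pi/2]$. Dividing by $\theta$ gives $(\theta/\pi)^{2/p}/\theta\to\infty$, while $\theta^2/\theta\to0$.

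To make the estimate explicit, I would use Bernoulli's inequality $(1-u)^{1/p}\ge 1-u$ for $u\in[0,1]$ and $p\ge1$. This gives
\[
\left(\frac{1+c}{2}\right)^{1/p}\ge 1-\sin^2(\theta/2)\ge 1-\theta^2/4 .
\]
So the left side is at least $1-\theta^2/4+(\theta/\pi)^{2/p}$, and the inequality fails as soon as
\[
(\theta/\pi)^{2/p} > \theta+\theta^2/4 .
\]
This holds on some interval $(0,\theta_0)$ because $2/p<1$: divide by $\theta^{2/p}$ and note that $\theta^{1-2/p}+\theta^{2-2/p}/4\to0$.

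The only delicate point is ensuring that the exponent gap $2/p<1$ is exploited correctly, i.e., that the $\theta^{2/p}$ gain dominates the linear term $\sin\theta$. The remaining steps are elementary inequalities.
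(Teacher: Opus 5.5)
Your proof is correct and follows the same route as the paper: for small $\theta$, the $\theta^{2/p}$ contribution of $\bigl((1-c)/2\bigr)^{1/p}$ dominates the linear term $\sin\theta$ because $2/p<1$. The only difference is that you make this explicit with elementary bounds ($x^{1/p}\ge x$ on $[0,1]$, $\sin x\ge 2x/\pi$, $\sin\theta\le\theta$) where the paper uses $O(\cdot)$ expansions, and your inequality $(1-u)^{1/p}\ge 1-u$ is not Bernoulli's inequality (which bounds $(1-u)^{1/p}$ from above), though it is true for exactly the reason just given.
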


\begin{proof}
	Define
	\[
	\Phi(\theta):=\left(\frac{1+\cos\theta}{2}\right)^{1/p}+\left(\frac{1-\cos\theta}{2}\right)^{1/p}-(1+\sin\theta).
	\]
	As $\theta\to0^+$ we have the standard expansions
	\[
	\sin\theta=\theta+O(\theta^3),\qquad
	\cos\theta=1-\frac{\theta^2}{2}+O(\theta^4).
	\]
	Hence
	\[
	\frac{1+\cos\theta}{2}=1-\frac{\theta^2}{4}+O(\theta^4)
	\quad\Longrightarrow\quad
	\left(\frac{1+\cos\theta}{2}\right)^{1/p}=1+O(\theta^2),
	\]
	and
	\begin{align*}
		&\frac{1-\cos\theta}{2}=\frac{\theta^2}{4}+O(\theta^4)
		=\frac{\theta^2}{4}\bigl(1+O(\theta^2)\bigr)\\
		&\Longrightarrow
		\left(\frac{1-\cos\theta}{2}\right)^{1/p}
		=4^{-1/p}\theta^{2/p}\bigl(1+O(\theta^2)\bigr)
		=4^{-1/p}\theta^{2/p}+O(\theta^{2/p+2}).
	\end{align*}

	Combining these estimates gives
	\[
	\Phi(\theta)
	=4^{-1/p}\theta^{2/p}-\theta+O(\theta^2)+O(\theta^{2/p+2})
	\qquad(\theta\to0^+).
	\]
	Dividing by $\theta$ yields
	\[
	\frac{\Phi(\theta)}{\theta}
	=
	4^{-1/p}\theta^{2/p-1}-1+O(\theta)+O(\theta^{2/p+1})
	\xrightarrow[\theta\to0^+]{}+\infty,
	\]
	since $p>2$ implies $2/p-1<0$. Therefore $\Phi(\theta)>0$ for all sufficiently small $\theta$,
	i.e.\ \eqref{eq:necessary-ineq-theta} fails for all $\theta\in(0,\theta_0)$ for some $\theta_0>0$.
\end{proof}

\begin{proof}[Proof of Theorem \ref{thm:main1}]
	Assume $p>2$. Consider the $2\times2$ matrix $Z_\theta$ defined in \eqref{eq:Ztheta}.
	If \eqref{eq:Bourin_Lee_p_le_2} held for all $Z\in\mathbb{M}_2$, then it would hold for $Z_\theta$ and imply the
	trace inequality \eqref{eq:trace-necessary}. Claims \ref{cla:left} and \ref{cla:right} reduce \eqref{eq:trace-necessary}
	to \eqref{eq:necessary-ineq-theta}, which fails for all sufficiently small $\theta$ by Claim \ref{cla:asymptotic}.
	Hence \eqref{eq:Bourin_Lee_p_le_2} cannot hold for all $Z\in\mathbb{M}_2$ when $p>2$.
	
	If $n>2$, embed $Z_\theta$ as $Z_\theta\oplus 0_{n-2}\in\Mn$; the same trace obstruction applies, so
	\eqref{eq:Bourin_Lee_p_le_2} fails in $\Mn$ as well. This contradicts the hypothesis, so necessarily $p\le2$.
\end{proof}

\section{Proof of Theorem~\ref{thm:main2}}
\label{sec:proof-main2}

\begin{proof}[Proof of Theorem~\ref{thm:main2}]
	Let $H=\ell^2(\mathbb N)$ with its standard orthonormal basis $(e_n)_{n\ge1}$, and define a finite-rank operator $Z\in\mathbb K$ by
	\[
	Ze_1=e_2,\qquad Ze_2=e_3,\qquad Ze_n=0\ \ (n\ge3).
	\]
	Equivalently, $Z$ is the unilateral shift truncated to the first three coordinates, that is,
	\[
	Z=
	\begin{pmatrix}
		0&0&0\\
		1&0&0\\
		0&1&0
	\end{pmatrix}
	\oplus 0 \oplus 0 \oplus \cdots .
	\]
	
	We first compute $|Z|$ and $|Z^*|$. Since
	\[
	Z^*Z=\diag(1,\,1,\,0,\,0,\,0,\dots),
	\qquad
	ZZ^*=\diag(0,\,1,\,1,\,0,\,0,\dots),
	\]
	we get
	\[
	|Z|=(Z^*Z)^{1/2}=\diag(1,\,1,\,0,\,0,\,0,\dots),
	\qquad
	|Z^*|=(ZZ^*)^{1/2}=\diag(0,\,1,\,1,\,0,\,0,\dots).
	\]
	Hence, for every $p>0$,
	\[
	\frac{|Z|^{p}+|Z^{*}|^{p}}{2}
	=
	\diag\!\left(\frac12,\,1,\,\frac12,\,0,\,0,\dots\right),
	\]
	and therefore
	\[
	\left(\frac{|Z|^{p}+|Z^{*}|^{p}}{2}\right)^{1/p}
	=
	\diag\!\left(2^{-1/p},\,1,\,2^{-1/p},\,0,\,0,\dots\right).
	\]
	In particular, the singular values of this operator satisfy
	\[
	\mu_1^\downarrow=1,\qquad \mu_2^\downarrow=2^{-1/p},\qquad \mu_3^\downarrow=2^{-1/p},\qquad \mu_m^\downarrow=0\ (m\ge4).
	\]
	
	Next, compute the real and imaginary parts:
	\[
	\Ree Z=\frac{Z+Z^*}{2}
	=
	\begin{pmatrix}
		0&\frac12&0\\[2pt]
		\frac12&0&\frac12\\[2pt]
		0&\frac12&0
	\end{pmatrix}
	\oplus 0 \oplus 0 \oplus \cdots,
	\qquad
	\Ima Z=\frac{Z-Z^*}{2i}.
	\]
	A direct computation shows that $\Ree Z$ has eigenvalues $0$ and $\pm 2^{-1/2}$, hence
	\[
	\mu^\downarrow(\Ree Z)=\left(2^{-1/2},\,2^{-1/2},\,0,\,0,\,0,\dots\right).
	\]
	Moreover, $\Ima Z$ is unitarily similar to $\Ree Z$.
	Indeed, for the $3\times 3$ leading block set $D=\diag(1, -i,-1)$ and extend it by
	$\oplus I$ on the remaining coordinates. Then one checks that
	\[
	\Ima Z = D\,(\Ree Z)\,D^*.
	\]
	Hence $\Ree Z$ and $\Ima Z$ have the same singular values.
	\[
	\mu^\downarrow(\Ima Z)=\left(2^{-1/2},\,2^{-1/2},\,0,\,0,\,0,\dots\right).
	\]
	Therefore,
	\[
	\mu_1^\downarrow(\Ima Z)=2^{-1/2},
	\qquad
	\mu_3^\downarrow(\Ree Z)=0.
	\]
	
	Choose $j=2$ and $k=0$, so that $1+j+k=3$, $1+j=3$, and $1+k=1$. Then for every $p>2$,
	\[
	\mu^{\downarrow}_{1+j+k}\!\left(\left(\frac{|Z|^{p}+|Z^{*}|^{p}}{2}\right)^{1/p}\right)
	=
	\mu_3^\downarrow\!\left(\left(\frac{|Z|^{p}+|Z^{*}|^{p}}{2}\right)^{1/p}\right)
	=
	2^{-1/p},
	\]
	while
	\[
	\mu^{\downarrow}_{1+j}(\Ree Z)
	+\mu^{\downarrow}_{1+k}(\Ima Z)
	=
	\mu_3^\downarrow(\Ree Z)+\mu_1^\downarrow(\Ima Z)
	=
	0+2^{-1/2}
	=
	2^{-1/2}.
	\]
	Since $p>2$ implies $2^{-1/p}>2^{-1/2}$, we conclude that
	\[
	\mu^{\downarrow}_{1+j+k}\!\left(\left(\frac{|Z|^{p}+|Z^{*}|^{p}}{2}\right)^{1/p}\right)
	>
	\mu^{\downarrow}_{1+j}(\Ree Z)
	+\mu^{\downarrow}_{1+k}(\Ima Z).
	\]
\end{proof}

\begin{rem}
	The construction is intentionally minimal and answers the \emph{existence} question in Question~\ref{ques:2}.
	Here $Z$ has rank $2$, and the operator $\left(\frac{|Z|^{p}+|Z^{*}|^{p}}{2}\right)^{1/p}$ is diagonal and easy to read off explicitly.
	The strict inequality is obtained by selecting indices $(j,k)=(2,0)$ for which the corresponding singular value of $\Ree Z$ vanishes at level $1+j=3$, while $\Ima Z$ contributes only its top singular value $2^{-1/2}$, yet the third singular value on the left-hand side equals $2^{-1/p}>2^{-1/2}$ whenever $p>2$.
\end{rem}
\section{Euler-type identities and isometry orbits}
\label{sec:euler-isometry}

In this section, we develop an operator-valued Euler identity and its consequences in the language of isometry orbits.
Our goal is twofold. First, we give a proof of Theorem~\ref{thm:main3} by encoding the Euler-type relation
into a unitary conjugation of suitable block matrices and then decomposing the resulting positive operator into
isometric compressions of its blocks. Second, we collect a convenient isometry decomposition principle for positive
block matrices, which will be used later to derive orbit-dominance statements from positivity and symmetry.

We start with a Pythagoras-type theorem for partitioned matrices due to Bourin and Lee~\cite{BLL24},
which allows us to express the square modulus of a block matrix as a sum of isometric conjugations of the square moduli
of its blocks.

\begin{lem}[{\cite[Theorem~2.1]{BLL24}}]\label{lem:pythagoras-partitioned}
	Let $m,n\ge1$ and let $T=[T_{ij}]_{i,j=1}^m$ be an $m\times m$ block matrix with $T_{ij}\in \Mn$.
	Then there exist isometries $W_{ij}\in \mathbb{M}_{mn,n}$ such that
	\[
	|T|^{2}=\sum_{i=1}^m\sum_{j=1}^m W_{ij}\,|T_{ij}|^{2}\,W_{ij}^*.
	\]
\end{lem}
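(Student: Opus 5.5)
We prove Lemma~\ref{lem:pythagoras-partitioned} by isolating one block at a time. Let $E_j\in\mathbb M_{mn,n}$ denote the isometry onto the $j$-th block coordinate, so that $T E_j=\binom{T_{1j}}{\vdots}_{T_{mj}}$ is the $j$-th block column, and write $P_i:=E_iE_i^*$ for the block-diagonal projections. Then
\[
|T|^2=T^*T=T^*\Bigl(\sum_{i=1}^m P_i\Bigr)T=\sum_{i=1}^m (E_i^*T)^*(E_i^*T),
\]
where $R_i:=E_i^*T\in\mathbb M_{n,mn}$ is the $i$-th block row $(T_{i1},\dots,T_{im})$. Thus $|T|^2=\sum_i R_i^*R_i$, and it suffices to show that, for each fixed $i$, the matrix $R_i^*R_i$ is a sum of $m$ isometric conjugations $W_{ij}|T_{ij}|^2W_{ij}^*$.

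To handle a single block row $R=(X_1,\dots,X_m)$ with $X_j\in\mathbb M_n$, we use the standard identity $|Y|^2=Y^*Y\simeq YY^*=|Y^*|^2$ up to isometric conjugation, which follows from the polar decomposition. First,
\[
RR^*=\sum_{j=1}^m X_jX_j^*=\sum_{j=1}^m |X_j^*|^2 .
\]
Next, $R^*R\in\mathbb M_{mn}$ and $RR^*\in\mathbb M_n$ share their nonzero eigenvalues with multiplicity. Writing $R=V|R|$ with $V$ a partial isometry from $\mathbb C^{mn}$ to $\mathbb C^n$ gives $R^*R=V^*(RR^*)V$. We extend $V^*$ restricted to the support of $RR^*$ to an isometry $J\in\mathbb M_{mn,n}$; since $mn\ge n$, there is always room in the orthogonal complement. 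This yields $R^*R=J(RR^*)J^*$.

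Similarly, for each $j$ we have $X_jX_j^*=Y_j|X_j|^2Y_j^*$ for some unitary $Y_j\in\mathbb M_n$, since $X_jX_j^*$ and $X_j^*X_j$ are unitarily similar in $\mathbb M_n$. Combining,
\[
R_i^*R_i=\sum_{j=1}^m (JY_j)|T_{ij}|^2(JY_j)^*,
\]
where $J=J_i$ depends on $i$. Each $W_{ij}:=J_iY_j$ is an isometry in $\mathbb M_{mn,n}$, being a product of an isometry and a unitary. Summing over $i$ gives the claim.

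The only delicate point is the construction of the isometry $J_i$ intertwining $R_i^*R_i$ and $R_iR_i^*$. When $R_i^*R_i$ has rank less than $n$, the partial isometry from the polar decomposition must be extended to a genuine isometry. This is the step we expect to need care, but dimension counting settles it: the kernel of $R_iR_i^*$ in $\mathbb C^n$ has dimension $n-r$, while the complement of $\Ran(R_i^*)$ in $\mathbb C^{mn}$ has dimension $mn-r\ge n-r$, so an isometric extension always exists. Everything else is bookkeeping with block matrices.
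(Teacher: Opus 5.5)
Your proof is correct. Note that the paper gives no proof of this lemma: it is quoted from \cite[Theorem~2.1]{BLL24}, so your argument stands as a self-contained proof of a cited result. Each step holds up:
\begin{itemize}
\item $|T|^2=\sum_i R_i^*R_i$ follows from $\sum_i E_iE_i^*=I_{mn}$.
\item $R^*R=V^*(RR^*)V$ follows from the polar decomposition.
\item The isometric extension $J$ of $V^*$ restricted to $\Ran(RR^*)$ exists by the count $mn-r\ge n-r$. It still satisfies $J(RR^*)J^*=V^*(RR^*)V$, because $RR^*$ annihilates the subspace on which $J$ differs from $V^*$.
\item $T_{ij}T_{ij}^*=Y|T_{ij}|^2Y^*$ for some unitary $Y$, since the blocks are square.
\end{itemize}
The only slip is notational. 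The unitary $Y_j$ depends on the block $T_{ij}$, hence also on $i$, so you should write $Y_{ij}$ and $W_{ij}=J_iY_{ij}$.

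A shorter route is available inside the paper itself. Apply Lemma~\ref{lem:isometry-decomp} to the positive block matrix $H=|T|^2=T^*T$. Its $k$-th diagonal block is $H_{kk}=\sum_{i=1}^m T_{ik}^*T_{ik}=\sum_{i=1}^m |T_{ik}|^2$. The lemma then gives isometries $V_k\in\mathbb{M}_{mn,n}$ with $|T|^2=\sum_k V_kH_{kk}V_k^*=\sum_{i,k}V_k|T_{ik}|^2V_k^*$, so one may take $W_{ik}=V_k$. For $m=1$ there is nothing to prove.

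This column-wise version avoids your two ``flips'' ($R_i^*R_i\to R_iR_i^*$, then $T_{ij}T_{ij}^*\to|T_{ij}|^2$). The diagonal blocks of $T^*T$ already contain the moduli $|T_{ik}|^2$, and the resulting isometries depend only on the column index. Your row-wise argument, in turn, is independent of that lemma and uses only polar decomposition together with the same partial-isometry extension trick that appears in Step~3 of its proof.
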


\begin{rem}
	The standard $m\times m$ grid partitioning is row/column compatible in the sense of \cite{BLL24}, so the above lemma applies directly to block matrices $T=[T_{ij}]$ written with respect to such a partition.
\end{rem}
We now apply Lemma~\ref{lem:pythagoras-partitioned} to a block matrix obtained from a Hadamard conjugation.
This provides an isometric-orbit refinement of the Euler operator identity and yields Theorem~\ref{thm:main3}.
\begin{proof}[Proof of Theorem~\ref{thm:main3}]
	Let
	\[
	X:=
	\begin{pmatrix}
		A+B+C\\
		A\\
		B\\
		C
	\end{pmatrix},
	\qquad
	Y:=
	\begin{pmatrix}
		A+B\\
		B+C\\
		C+A\\
		0
	\end{pmatrix}
	\in \mathbb{M}_{4n,n}.
	\]
	Consider the $4\times 4$ Hadamard matrix
	\[
	H=\frac12
	\begin{pmatrix}
		1&1&1&1\\
		1&-1&1&-1\\
		1&1&-1&-1\\
		-1&1&1&-1
	\end{pmatrix},
	\qquad H^*H=I_4,
	\]
	and set $\mathcal U:=H\otimes I_n\in\mathbb{M}_{4n}$.
	A direct block computation gives $X=\mathcal U\,Y$.
	Hence $X^*X=Y^*Y$, i.e.\ \eqref{eq:Euler-op}.
	
	Let
	\[
	\Delta_Y:=
	\begin{pmatrix}
		A+B&0&0&0\\
		0&B+C&0&0\\
		0&0&C+A&0\\
		0&0&0&0
	\end{pmatrix}\in \mathbb{M}_{4n}.
	\]
	Set
	\[
	T:=\mathcal U\,\Delta_Y\,\mathcal U^*
	=\frac12
	\begin{pmatrix}
		A+B+C & A & B & C\\
		A & A+B+C & -C & -B\\
		B & -C & A+B+C & -A\\
		C & -B & -A & A+B+C
	\end{pmatrix}.
	\]
	Since $\mathcal U$ is unitary,
	\[
	|T|^2
	=|\mathcal U\Delta_Y\mathcal U^*|^2
	=\mathcal U\,|\Delta_Y|^2\,\mathcal U^*
	=\mathcal U\,
	\diag(|A+B|^2,\ |B+C|^2,\ |C+A|^2,\ 0)\,
	\mathcal U^*.
	\]
	Apply Lemma~\ref{lem:pythagoras-partitioned} with $m=4$ to the $4\times4$ block matrix $T=[T_{ij}]$.
	We obtain isometries $W_{ij}\in \mathbb{M}_{4n,n}$ such that
	\[
	|T|^2=\sum_{i=1}^4\sum_{j=1}^4 W_{ij}\,|T_{ij}|^{2}\,W_{ij}^*.
	\]
	Every block $T_{ij}$ equals $(A+B+C)/2$, $\pm A/2$, $\pm B/2$ or $\pm C/2$.
	Hence
	\[
	|T_{ij}|^2 \in \left\{ \tfrac14|A+B+C|^2,\ \tfrac14|A|^2,\ \tfrac14|B|^2,\ \tfrac14|C|^2\right\}.
	\]
	Moreover, each of $(A+B+C)/2,\ A/2,\ B/2,\ C/2$ appears exactly four times among the sixteen blocks.
	Grouping the corresponding four terms and renaming the isometries, we obtain isometries $V_{ij}\in\mathbb{M}_{4n,n}$ such that
	\[
	|T|^2
	=
	\frac14\sum_{j=1}^4V_{1j}\,|A+B+C|^2\,V_{1j}^*
	+\frac14\sum_{j=1}^4V_{2j}\,|A|^2\,V_{2j}^*
	+\frac14\sum_{j=1}^4V_{3j}\,|B|^2\,V_{3j}^*
	+
	\frac14\sum_{j=1}^4V_{4j}\,|C|^2\,V_{4j}^*.
	\]
	Conjugating by $\mathcal U^*$ yields
	\[
	|\Delta_Y|^2
	=
	\frac14\sum_{j=1}^4(\mathcal U^*V_{1j})\,|A+B+C|^2\,(\mathcal U^*V_{1j})^*
	+\cdots+
	\frac14\sum_{j=1}^4(\mathcal U^*V_{4j})\,|C|^2\,(\mathcal U^*V_{4j})^*.
	\]
	Finally, each $\mathcal U^*V_{ij}$ is still an isometry in $\mathbb{M}_{4n,n}$ since
	\[
	(\mathcal U^*V_{ij})^*(\mathcal U^*V_{ij})=V_{ij}^*\mathcal U\mathcal U^*V_{ij}=V_{ij}^*V_{ij}=I_n.
	\]
	Renaming $U_{ij}:=\mathcal U^*V_{ij}$ gives \eqref{eq:main3}.
\end{proof}

Next, we record a useful lemma of Bourin and Lee~\cite{BL12}, which decomposes a $2\times2$ positive block matrix
into a sum of two unitary orbits of its diagonal compressions. This lemma will serve as a building block for a
higher-dimensional isometry decomposition.

\begin{lem}[{\cite[Lemma~3.4]{BL12}}]\label{lem:BL12-3.4}
	Let
	$
	\begin{pmatrix}
		X & Y\\
		Y^{*} & Z
	\end{pmatrix}\in\mathbb{M}_{2n}
	$
	be positive semidefinite, written in $n\times n$ blocks. Then there exist unitary matrices
	$U,V_{0}\in\mathbb{M}_{2n}$ such that
	\[
	\begin{pmatrix}
		X & Y\\
		Y^{*} & Z
	\end{pmatrix}
	=
	U\begin{pmatrix}
		X & 0\\
		0 & 0
	\end{pmatrix}U^{*}
	+
	V_{0}\begin{pmatrix}
		0 & 0\\
		0 & Z
	\end{pmatrix}V_{0}^{*}.
	\]
\end{lem}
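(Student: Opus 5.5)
The statement to prove is Lemma~\ref{lem:BL12-3.4}. I would factor the positive block matrix, write it as $R^*R$ with $R=(R_1\ R_2)$, and then use the elementary fact that $S^*S$ and $SS^*$ are unitarily congruent.

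Since $M:=\begin{pmatrix} X & Y\\ Y^* & Z\end{pmatrix}\ge 0$, set $R:=M^{1/2}\in\mathbb M_{2n}$. Write $R=(R_1\ R_2)$ with $R_1,R_2\in\mathbb M_{2n,n}$ the two column blocks. Then $M=R^*R$, so
\[
X=R_1^*R_1,\qquad Z=R_2^*R_2,\qquad Y=R_1^*R_2.
\]
On the other hand, $M=M^{1/2}M^{1/2}=RR^*$, since $R$ is self-adjoint, and hence
\[
M=RR^*=R_1R_1^*+R_2R_2^*.
\]
The question is therefore reduced to comparing $R_1R_1^*\in\mathbb M_{2n}$ with $\diag(X,0)$, and similarly for $R_2$.

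Next, put $S_1:=(R_1\ 0)\in\mathbb M_{2n}$. Then
\[
S_1S_1^*=R_1R_1^*,\qquad S_1^*S_1=\diag(R_1^*R_1,0)=\diag(X,0).
\]
For any square matrix $S$, the polar decomposition $S=W|S|$ with $W$ unitary (available in finite dimensions) gives $SS^*=W(S^*S)W^*$. This yields a unitary $U$ with $R_1R_1^*=U\diag(X,0)U^*$. The same argument applied to $S_2:=(0\ R_2)$ gives $S_2^*S_2=\diag(0,Z)$ and a unitary $V_0$ with $R_2R_2^*=V_0\diag(0,Z)V_0^*$. Adding the two identities gives the claimed decomposition.

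The only point needing care is the step $SS^*=W(S^*S)W^*$ with $W$ a genuine unitary rather than just a partial isometry. In the matrix case this holds because the partial isometry from the polar decomposition can always be extended to a unitary, since the kernel of $S$ and the orthogonal complement of its range have equal dimension. Nothing else is an obstacle; the main idea is the choice $R=M^{1/2}$, which makes $R^*R=RR^*$.
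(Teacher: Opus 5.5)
Your proof is correct and is essentially the argument the paper relies on. The paper only cites \cite[Lemma~3.4]{BL12} for this statement, but its self-contained proof of the $m$-block generalization, Lemma~\ref{lem:isometry-decomp}, runs exactly your steps: take the positive square root $R$ so that $R^*R=RR^*$, split $RR^*=\sum_k R_kR_k^*$ over column blocks, identify $R_k^*R_k$ with the diagonal blocks, and use a polar decomposition of each $R_k$. The one difference is cosmetic: you pad $R_1$ to the square matrix $(R_1\ 0)$ and take a unitary polar factor, whereas the paper extends the rectangular partial isometry $U_k\in\mathbb{M}_{mn,n}$ to an isometry by a dimension count; either way works.
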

As an immediate consequence of Lemma~\ref{lem:BL12-3.4}, one obtains an isometry decomposition for positive block matrices
with an arbitrary number of blocks. For the reader's convenience, we include a self-contained proof.
\begin{lem}\label{lem:isometry-decomp}
	Let $m\ge 2$ and let $H=[H_{ij}]_{i,j=1}^m\in\mathbb{M}_{mn}$ be positive semidefinite,
	written in $n\times n$ blocks. Then there exist isometries
	$V_1,\dots,V_m\in\mathbb{M}_{mn,n}$ (i.e., $V_k^*V_k=I_n$) such that
	\begin{equation}\label{eq:isometry-decomp}
		H=\sum_{k=1}^m V_k\,H_{kk}\,V_k^*.
	\end{equation}
\end{lem}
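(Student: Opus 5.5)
We argue by induction on $m$, with Lemma~\ref{lem:BL12-3.4} supplying both the base case and the inductive step. The only care needed is bookkeeping: we compose unitaries on $\mathbb{M}_{mn}$ with block-embedding isometries so that each final term has the form $V_kH_{kk}V_k^*$ with $V_k^*V_k=I_n$.

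\emph{Base case $m=2$.} Lemma~\ref{lem:BL12-3.4} gives unitaries $U,V_0\in\mathbb{M}_{2n}$ with
\[
H=U\begin{pmatrix}H_{11}&0\\0&0\end{pmatrix}U^*+V_0\begin{pmatrix}0&0\\0&H_{22}\end{pmatrix}V_0^*.
\]
Let $E_1=\binom{I_n}{0}$ and $E_2=\binom{0}{I_n}$ be the coordinate isometries in $\mathbb{M}_{2n,n}$, so that $\diag(H_{11},0)=E_1H_{11}E_1^*$ and $\diag(0,H_{22})=E_2H_{22}E_2^*$. Setting $V_1:=UE_1$ and $V_2:=V_0E_2$ gives \eqref{eq:isometry-decomp}, and $V_k^*V_k=E_k^*E_k=I_n$.

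\emph{Inductive step.} Suppose the claim holds for $m-1$ and let $H\in\mathbb{M}_{mn}$ be positive semidefinite. Partition $H$ as a $2\times2$ block matrix with respect to $\mathbb{C}^{(m-1)n}\oplus\mathbb{C}^{n}$:
\[
H=\begin{pmatrix}H'&Y\\Y^*&H_{mm}\end{pmatrix},\qquad H'=[H_{ij}]_{i,j=1}^{m-1}\ge0.
\]
Lemma~\ref{lem:BL12-3.4} is stated for equal block sizes, so we first make the blocks square. Pad with zeros to a $2\times2$ block matrix of size $2(m-1)n$, placing $H_{mm}$ inside a $(m-1)n$-block as $H_{mm}\oplus 0$; this padding preserves positivity. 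Applying Lemma~\ref{lem:BL12-3.4} to the padded matrix and compressing back via the isometric inclusion $\mathbb{C}^{mn}\hookrightarrow\mathbb{C}^{2(m-1)n}$ yields isometries $S\in\mathbb{M}_{mn,(m-1)n}$ and $T\in\mathbb{M}_{mn,n}$ with
\[
H=SH'S^*+TH_{mm}T^*.
\]
\textbf{Compression check.} If $J:\mathbb{C}^{mn}\to\mathbb{C}^{2(m-1)n}$ is the inclusion, then the padded matrix $\widetilde H=JHJ^*$ satisfies $\widetilde H=\widetilde U(H'\oplus0)\widetilde U^*+\widetilde V(0\oplus H_{mm}\oplus0)\widetilde V^*$. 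Both summands are positive and their sum $\widetilde H$ is supported in $\operatorname{Ran}J$, so each summand is supported in $\operatorname{Ran}J$. Consequently $\widetilde U$ maps the support of $H'\oplus 0$ into $\operatorname{Ran}J$, and likewise for $\widetilde V$.

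This support issue is the main obstacle. The inclusion $\widetilde U E$ need not have range inside $\operatorname{Ran}J$ on the kernel of $H'$, so $J^*\widetilde U E$ might fail to be an isometry. The remedy is to replace $\widetilde U$ on $\ker H'$ by any isometry into the orthogonal complement of the range already used. This requires enough dimension, which is where the padding, or more simply the general fact that in finite dimensions a partial isometry extends to an isometry when dimensions permit, comes in. An alternative that sidesteps the issue is to cite the unequal-block-size version of Lemma~\ref{lem:BL12-3.4}, whose proof in \cite{BL12}, via square roots $H=R^*R$ and column decompositions, works verbatim for any block sizes.

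\emph{Concluding the step.} By the inductive hypothesis, $H'=\sum_{k=1}^{m-1}V_k'H_{kk}V_k'^*$ with isometries $V_k'\in\mathbb{M}_{(m-1)n,n}$. Set $V_k:=SV_k'$ for $k\le m-1$ and $V_m:=T$. Each is an isometry because a product of isometries is an isometry, and \eqref{eq:isometry-decomp} follows.

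Alternatively, the whole argument can be phrased directly. Write $H=R^*R$ with $R=[R_1,\dots,R_m]$ (block columns), so that $H_{kk}=R_k^*R_k$ and $H=R^*R$. Applying Thompson-type square-root manipulations to each column would give the same conclusion, but the induction above is the intended route.
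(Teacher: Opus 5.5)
Your induction is correct, but it takes a different route from the paper. The paper does not induct and does not actually use Lemma~\ref{lem:BL12-3.4}. It takes $R=H^{1/2}$ with block columns $R_1,\dots,R_m$, so that $H_{kk}=R_k^*R_k$ and $H=RR^*=\sum_k R_kR_k^*$ hold simultaneously. It writes $R_k=U_kH_{kk}^{1/2}$ (polar decomposition) and extends the partial isometry $U_k$ to an isometry $V_k\in\mathbb{M}_{mn,n}$, which is possible since $mn-\rank H_{kk}\ge n-\rank H_{kk}$. Then $R_kR_k^*=V_kH_{kk}V_k^*$, and one sums. This is your closing ``alternative'' made precise. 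Note that it is the self-adjoint choice $R=H^{1/2}$ that makes $\sum_kR_kR_k^*$ equal to $H$, rather than merely unitarily equivalent to it.

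What your induction buys is a genuine reduction to the $2\times2$ case, i.e.\ a justification of the paper's remark that the lemma follows from Lemma~\ref{lem:BL12-3.4}. The cost is the support issue you correctly identified. With the equal-block statement, after padding and compressing by $J^*$, the maps $J^*\widetilde UE$ and the analogous one for $\widetilde V$ are isometric only on $\operatorname{Ran}H'$ and $\operatorname{Ran}H_{mm}$. Your repair on $\ker H'$ and $\ker H_{mm}$ is exactly the partial-isometry extension of the paper's Step~3. The room for it ($mn-\rank H'\ge(m-1)n-\rank H'$ inside $\operatorname{Ran}J$) comes from $\dim\operatorname{Ran}J=mn$, not from the padding. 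So in this form the induction repeats the paper's extension step at every level rather than avoiding it.

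If instead you apply the $2\times2$ decomposition with unequal blocks of sizes $(m-1)n$ and $n$ directly in $\mathbb{M}_{mn}$ (as you note, the Bourin--Lee argument does not need equal block sizes), then $S=U\binom{I}{0}$ and $T=V\binom{0}{I}$ are automatically isometries. No padding or repair is needed, and the inductive step becomes a few lines. Bear in mind, though, that this unequal-block lemma is itself proved by the same square-root and column argument that the paper applies directly to all $m$ blocks at once.
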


\begin{proof}
	Since $H\ge0$, let $R:=H^{1/2}$, so $R=R^*$ and $H=R^2=RR^*=R^*R$.
	Partition $R$ into block columns $R=[R_1\ \cdots\ R_m]$.
	Then
	\begin{equation}\label{eq:HsumRk}
		H=RR^*=\sum_{k=1}^m R_kR_k^*.
	\end{equation}

	\medskip
	\noindent\emph{Step 1: Identify the diagonal blocks.}
	By block multiplication, the $(k,k)$ diagonal block of $H=R^*R$ is
	\[
	H_{kk}=R_k^*R_k\in\mathbb{M}_n^+.
	\]
	In particular, $|R_k|:=(R_k^*R_k)^{1/2}=H_{kk}^{1/2}$.
	
	\medskip
	\noindent\emph{Step 2: Polar decomposition and a partial isometry.}
	Take the polar decomposition of each $R_k$:
	\[
	R_k=U_k\,|R_k|=U_k\,H_{kk}^{1/2},
	\]
	where $U_k\in\mathbb{M}_{mn,n}$ is a partial isometry satisfying
	\[
	U_k^*U_k=P_k,
	\qquad
	P_k:=\mathrm{supp}(|R_k|)=\mathrm{supp}(H_{kk})
	\]
	(the orthogonal projection onto $\overline{\mathrm{Ran}}(|R_k|)$).
	Note that $H_{kk}^{1/2}(I_n-P_k)=0$.
	
	\medskip
	\noindent\emph{Step 3: Extend $U_k$ to a genuine isometry $V_k$.}
	If $P_k=I_n$, then $U_k^*U_k=I_n$ and $U_k$ is already an isometry; set $V_k:=U_k$.
	
	Assume now $P_k\neq I_n$. Set $d_k:=\mathrm{rank}(P_k)\le n$.
	Then $\dim\mathrm{Ran}(I_n-P_k)=n-d_k$.
	Also, since $U_k$ is a partial isometry with initial projection $P_k$,
	its range projection $Q_k:=U_kU_k^*$ has rank $d_k$, hence
	\[
	\dim\mathrm{Ran}(I_{mn}-Q_k)=mn-d_k\ \ge\ n-d_k,
	\]
	using $m\ge2\Rightarrow mn\ge n$.
	
	Therefore, there exists an isometry
	\[
	W_k:\mathrm{Ran}(I_n-P_k)\longrightarrow \mathrm{Ran}(I_{mn}-Q_k),
	\]
	i.e.\ a matrix $W_k\in\mathbb{M}_{mn,n}$ such that
	\[
	W_k^*W_k=I_n-P_k,
	\qquad
	W_kW_k^*\le I_{mn}-Q_k.
	\]
	(For example, choose orthonormal bases of the two subspaces and map one to the other.)
	
	Now define
	\[
	V_k:=U_k+W_k\in\mathbb{M}_{mn,n}.
	\]
	Then we claim the cross terms vanish. First, since $Q_k:=U_kU_k^*$ is the
	range projection of $U_k$, we have $Q_kU_k=U_k$, hence $U_k^*(I_{mn}-Q_k)=0$.
	On the other hand, $W_kW_k^*\le I_{mn}-Q_k$ implies $\Ran(W_k)\subseteq\Ran(I_{mn}-Q_k)$,
	equivalently $(I_{mn}-Q_k)W_k=W_k$. Therefore
	\[
	U_k^*W_k=U_k^*(I_{mn}-Q_k)W_k=0.
	\]
	Similarly, $U_k$ has initial projection $P_k=U_k^*U_k$, so $U_kP_k=U_k$ and hence
	$(I_n-P_k)U_k^*=0$. Since $W_k^*W_k=I_n-P_k$, we have $\Ran(W_k^*)\subseteq\Ran(I_n-P_k)$,
	equivalently $W_k^*=W_k^*(I_n-P_k)$. Thus
	\[
	W_k^*U_k=W_k^*(I_n-P_k)U_k=0.
	\]
	Consequently,
	\[
	V_k^*V_k=(U_k+W_k)^*(U_k+W_k)=U_k^*U_k+W_k^*W_k
	=P_k+(I_n-P_k)=I_n,
	\]
	so $V_k$ is an isometry in $\mathbb{M}_{mn,n}$.
	
	Moreover, since $H_{kk}^{1/2}(I_n-P_k)=0$, we still have
	\[
	V_k H_{kk}^{1/2}
	=
	(U_k+W_k)H_{kk}^{1/2}
	=
	U_kH_{kk}^{1/2}
	=
	R_k.
	\]
	
	\medskip
	\noindent\emph{Step 4: Conclude the decomposition.}
	From $R_k=V_kH_{kk}^{1/2}$ we deduce
	\[
	R_kR_k^*
	=
	V_kH_{kk}^{1/2}\,H_{kk}^{1/2}V_k^*
	=
	V_kH_{kk}V_k^*.
	\]
	Summing over $k$ and using \eqref{eq:HsumRk} gives
	\[
	H=\sum_{k=1}^m R_kR_k^*=\sum_{k=1}^m V_kH_{kk}V_k^*,
	\]
	which is exactly \eqref{eq:isometry-decomp}.
\end{proof}
We are now ready to prove Theorem~\ref{thm:first} and Theorem~\ref{thm:second}.
In both cases, we conjugate an appropriate direct sum by a Fourier matrix $F_m\otimes I_n$ to obtain a positive block matrix
with constant diagonal blocks, and then apply Lemma~\ref{lem:isometry-decomp} to conclude the desired isometry-orbit domination.

\begin{proof}[Proof of Theorem~\ref{thm:first}]
	First, we compute Fourier conjugations of two direct sums.
	Let $\omega=e^{2\pi i/3}$ and set
	\[
	W_3=\frac1{\sqrt3}
	\begin{pmatrix}
		I & I & I\\
		I & \omega I & \omega^2 I\\
		I & \omega^2 I & \omega I
	\end{pmatrix}
	=F_3\otimes I_n\in\mathbb{M}_{3n}.
	\]
	For
	\[
	D:=|A+B|^2\oplus|B+C|^2\oplus|A+C|^2=\diag(D_1,D_2,D_3),
	\quad
	\begin{cases}
		D_1=|A+B|^2,\\
		D_2=|B+C|^2,\\
		D_3=|A+C|^2,
	\end{cases}
	\]
	one has
	\begin{equation}\label{eq:W3DW3}
		W_3DW_3^*=\frac13
		\begin{pmatrix}
			D_1+D_2+D_3 & D_1+\omega^2 D_2+\omega D_3 & D_1+\omega D_2+\omega^2 D_3\\
			D_1+\omega D_2+\omega^2 D_3 & D_1+D_2+D_3 & D_1+\omega^2 D_2+\omega D_3\\
			D_1+\omega^2 D_2+\omega D_3 & D_1+\omega D_2+\omega^2 D_3 & D_1+D_2+D_3
		\end{pmatrix}.
	\end{equation}
	In particular, every diagonal block of $W_3DW_3^*$ equals $(D_1+D_2+D_3)/3$.
	
	Clearly, $W_3DW_3^*\ge 0$ and, by \eqref{eq:W3DW3}, each diagonal block equals $(D_1+D_2+D_3)/3$.
	A direct expansion shows
	\[
	D_1+D_2+D_3
	=
	|A+B|^2+|B+C|^2+|A+C|^2
	=
	|A+B+C|^2+|A|^2+|B|^2+|C|^2.
	\]
	Denote this common sum by $T$. Hence $W_3DW_3^*$ is a $3\times3$ block positive matrix with constant diagonal block $T/3$.
	Applying \eqref{eq:isometry-decomp} to $H=W_3DW_3^*$ yields isometries $V_1,V_2,V_3\in\mathbb{M}_{3n,n}$ such that
	\[
	W_3DW_3^*=\sum_{k=1}^3 V_k\,(T/3)\,V_k^*.
	\]
	Conjugating by $W_3^*$ and setting $U_k=W_3^*V_k$ (still isometries) gives the claim.
\end{proof}

\begin{proof}[Proof of Theorem~\ref{thm:second}]	First, let $\eta=e^{2\pi i/4}=i$ and set
	\[
	W_4=\frac1{2}
	\begin{pmatrix}
		I & I & I & I\\
		I & \eta I & \eta^2 I & \eta^3 I\\
		I & \eta^2 I & \eta^4 I & \eta^6 I\\
		I & \eta^3 I & \eta^6 I & \eta^9 I
	\end{pmatrix}
	=F_4\otimes I_n\in\mathbb{M}_{4n},
	\]
	so that $\eta^2=-1$, $\eta^3=-i$, $\eta^4=1$. For
	\[
	E:=|A+B+C|^2\oplus|A|^2\oplus|B|^2\oplus|C|^2=\diag(E_1,E_2,E_3,E_4),
	\]
	with $E_1=|A+B+C|^2$, $E_2=|A|^2$, $E_3=|B|^2$, $E_4=|C|^2$, we obtain
	\begin{equation}\label{eq:W4EW4}
		W_4EW_4^*=\frac14\bigl[M_{rs}\bigr]_{r,s=1}^4,
		\qquad
		M_{rs}=E_1+\eta^{-(r-s)}E_2+\eta^{-2(r-s)}E_3+\eta^{-3(r-s)}E_4.
	\end{equation}
	In particular, every diagonal block of $W_4EW_4^*$ equals $(E_1+E_2+E_3+E_4)/4$.
	
	Clearly, $W_4EW_4^*\ge 0$ and, by \eqref{eq:W4EW4}, each diagonal block equals $(E_1+E_2+E_3+E_4)/4$.
	As in the previous proof,
	\[
	E_1+E_2+E_3+E_4
	=
	|A+B+C|^2+|A|^2+|B|^2+|C|^2
	=
	|A+B|^2+|B+C|^2+|A+C|^2.
	\]
	Denote this sum by $S$. Thus $W_4EW_4^*$ is a $4\times4$ block positive matrix with constant diagonal block $S/4$.
	Applying \eqref{eq:isometry-decomp} to $H=W_4EW_4^*$ yields isometries $V_1,\dots,V_4\in\mathbb{M}_{4n,n}$ such that
	\[
	W_4EW_4^*=\sum_{k=1}^4 V_k\,(S/4)\,V_k^*.
	\]
	Conjugating by $W_4^*$ and setting $U_k=W_4^*V_k$ completes the proof.
\end{proof}
\begin{rem}
	In the proof of Theorem~\ref{thm:second} above, the choice of the unitarily conjugating matrix is not unique. For instance, we could pick a Hadamard matrix 
	\[
	H_4=\frac12
	\begin{pmatrix}
		1 & 1 & 1 & 1\\
		1 & 1 & -1 & -1\\
		1 & -1 & 1 & -1\\
		1 & -1 & -1 & 1
	\end{pmatrix}
	\] instead of the $4\times 4$ Fourier transform $F_4$: in such a case, dealing with real matrices yields
	real isometries.
\end{rem}

%	\section*{Declaration of competing interest}
%	The author declares no competing interests.
%	
%	\section*{Data availability}
%	No data was used for the research described in the article.
%	
%	\section*{Acknowledgments}
%	Teng Zhang is supported by the China Scholarship Council, the Young Elite Scientists Sponsorship Program for PhD Students (China Association for Science and Technology), and the Fundamental Research Funds for the Central Universities at Xi'an Jiaotong University (Grant No.~xzy022024045).

\end{document}